\newcommand{\newvertices}{\verticesof'}
\newcommand{\someedges}{\mathcal{E}}
\newcommand{\insof}{\mathrm{in}}
\newcommand{\intinsof}{\insof^\inter}
\newcommand{\intedges}{\edgesof^\inter}
\newcommand{\extedges}{\edgesof^\ext}
\newcommand{\extinsof}{\insof^\ext}
\newcommand{\rootv}{*}
\newcommand{\vertextwo}{\vertex'}
\newcommand{\leaf}{\lambda}
\newcommand{\tree}{T}
\newcommand{\vertex}{v}
\newcommand{\edgesof}{E}
\newcommand{\edge}{e}
\newcommand{\verticesof}{V}
\newcommand{\conlast}{\mathfrak{c}}
\newcommand{\conleaf}{\mathfrak{a}}
\newcommand{\conroot}{\mathfrak{b}}
\newcommand{\ext}{\mathrm{ext}}
\newcommand{\inter}{\mathrm{int}}
\newcommand{\treesone}[1]{\mathbb{T}(#1)}
\newcommand{\trees}[2]{\mathbb{T}_{#2}(#1)}
\newcommand{\setofsubsets}{\mathscr{S}}
\newcommand{\finiteset}{S}
\newcommand{\finitesubset}{P}
\newcommand{\mapofsets}{f}
\newcommand{\element}{s}
\newcommand{\rR}{\mathbb{R}}
\newcommand{\thecircle}{S^1}
\newcommand{\nN}{\mathbb{N}}
\newcommand{\halfplane}{\mathcal{H}}
\newcommand{\halfplanes}{\mathbb{H}}
\newcommand{\diskone}{X}
\newcommand{\disktwo}{Y}
\newcommand{\diskthree}{Z}
\newcommand{\affc}{\aff{\cC}}
\newcommand{\interval}{I}
\newcommand{\cC}{\mathbb{C}}
\newcommand{\cog}{L}
\newcommand{\dummyfunction}{g}
\newcommand{\imaginaryunit}{i}
\newcommand{\smallepsilon}{\epsilon}
\newcommand{\realvariable}{a}
\newcommand{\timez}{t}
\newcommand{\varone}{p}
\newcommand{\vartwo}{q}
\newcommand{\cent}{c}
\newcommand{\radi}{r}
\newcommand{\vara}{u}
\newcommand{\varb}{w}
\newcommand{\varc}{x}
\newcommand{\vard}{y}
\newcommand{\vare}{z}
\newcommand{\complexunit}{\vare}
\newcommand{\centertwo}{\cent_\disktwo}
\newcommand{\centerone}{\cent_\diskone}
\newcommand{\centerthree}{\cent_\diskthree}
\newcommand{\rmin}{r_{\text{inf}}}
\newcommand{\rmax}{r_{\text{sup}}}
\newcommand{\rsmall}{r_{\text{har}}}
\newcommand{\rbig}{r_{\text{ari}}}
\newcommand{\bigR}{R}
\newcommand{\ita}{\itm}
\newcommand{\itb}{\itn}
\newcommand{\itn}{n}
\newcommand{\iti}{i}
\newcommand{\itj}{j}
\newcommand{\itk}{k}
\newcommand{\itm}{m} 
\newcommand{\nest}[2]{d(#1,#2)}
\newcommand{\nesthalf}{\nest{\diskone_{\frac{1}{2}}}{\disktwo_{\frac{1}{2}}}}
\newcommand{\nestsbase}{\min\left\{\nest{\diskone_0}{\disktwo_0},\nest{\diskone_1}{\disktwo_1}\right\}}
\newcommand{\disty}{\Delta}
\newcommand{\displacement}{\rho}
\newcommand{\integerpower}{m}
\newcommand{\integerthing}{k}
\newcommand{\fixedpower}{n}
\newcommand{\lowerrational}{\alpha}
\newcommand{\middlerational}{\beta}
\newcommand{\higherrational}{\gamma}
\newcommand{\angleof}{\theta}
\newcommand{\randomangle}{\nu}
\newcommand{\longmap}{\tau}
\DeclareMathOperator{\aff}{Aff}
\newcommand{\psplone}{\mathbf{x}}
\newcommand{\psplnaught}{\mathbf{z}}
\newcommand{\pspltwo}{\mathbf{y}}
\newcommand{\psplthree}{\psplnaught}
\newcommand{\overlappedset}{\Lambda}
\newcommand{\totallyadmissiblesubset}{\Theta}
\newcommand{\marking}{{\mathrm{Mark}}}
\newcommand{\augmarking}{\widehat{\marking}}
\newcommand{\fldel}{\zeta}
\newcommand{\dmmarking}{\overline{\marking}}
\newcommand{\comp}{\odot}
\newcommand{\conf}{\mathfrak{C}}
\newcommand{\conft}{\conf^{0}}
\newcommand{\nei}{\mathscr{N}}
\newcommand{\admissibles}{\mathbb{A}}
\newcommand{\disk}{\delta}
\DeclareMathOperator{\emb}{Emb}
\newcommand{\homotopypart}{h}
\newcommand{\longproj}{\psi}
\newcommand{\homotopy}{H}
\newcommand{\unei}{U}
\newcommand{\proj}{\mathrm{proj}}
\newcommand{\spacea}{W}
\newcommand{\spaceb}{X}
\newcommand{\spacec}{Y}
\newcommand{\spaced}{Z}
\newcommand{\spacemap}{\varphi}
\newcommand{\cover}{\mathbb{O}}
\newcommand{\Unei}{\unei}
\newcommand{\retract}{\pi}
\newcommand{\subspacegroup}{K}
\newcommand{\permut}{\sigma}
\newcommand{\msph}{\mathcal{M}}
\newcommand{\sspace}{\mathcal{U}}
\newcommand{\sS}{\mathbb{S}}
\newcommand{\unitcollection}{I}
\newcommand{\mapofcollections}{\nattran}
\newcommand{\oprealization}{\mathcal{R}}
\newcommand{\interoppushout}{\replaced{\thirdop}}
\newcommand{\einf}{\mathbb{E}}
\newcommand{\firstop}{\mathcal{O}}
\newcommand{\secondop}{\mathcal{P}}
\newcommand{\thirdop}{\mathcal{Q}}
\newcommand{\wfunctor}{\mathbb{W}}
\newcommand{\operad}{\firstop}
\newcommand{\trivialoperad}{\unitcollection}
\newcommand{\cyrc}{\circ}
\newcommand{\fld}{FLD}
\newcommand{\fldone}{FLD_1}
\newcommand{\onlycircle}[1]{#1_\bigcirc}
\newcommand{\flda}{\onlycircle{FLD}}
\newcommand{\topmon}{\mathcal{A}}
\newcommand{\gpel}{\pspltwo}
\newcommand{\opnei}{\mathscr{N}}
\newcommand{\tld}{An^{\mathrm{triv}}}
\newcommand{\tdrtwo}{\flda^{\mathrm{norm}}}
\newcommand{\Mbar}{\overline{\msph}}
\newcommand{\mbarone}{\onlycircle{\mbartwo}}
\newcommand{\mbartwo}{\overline{\mathbb{M}}}
\newcommand{\td}{\mbartwo}
\newcommand{\mbarthree}{\overline{\mathfrak{m}}}
\newcommand{\mbarthreetop}{\mbarthree^{\mathrm{Top}}}
\newcommand{\equivtodm}{\Psi}
\newcommand{\secty}{\Gamma}
\DeclareMathOperator{\id}{Id}
\DeclareMathOperator*{\colim}{colim}
\newcommand{\replacement}[1]{\overline{#1}}
\newcommand{\firstobject}{{\mathcal{W}}}
\newcommand{\secondobject}{\mathcal{X}}
\newcommand{\thirdobject}{\mathcal{Y}}
\newcommand{\fourthobject}{\mathcal{Z}}
\newcommand{\nattran}{\xi}
\newcommand{\morphismone}{\mathfrak{f}}
\newcommand{\morphismtwo}{\mathfrak{g}}
\newcommand{\functora}{F}
\newcommand{\functorb}{G}
\newcommand{\category}{\mathcal{C}}
\newcommand{\diagram}{\mathcal{D}}
\newcommand{\functor}{\functora}
\newcommand{\Scube}{\mathbf{Cu}_\finiteset}
\newcommand{\replaced}[1]{\widehat{#1}}
\newcommand{\cofar}{\ar@{>->}}
\newcommand{\cubecol}[3]{\displaystyle\colim_{\mathbf{Cu}_{#1}}(#2,#3)}
\newcommand{\neiz}[3]{\mathscr{N}_{#1,#2}}
\newcommand{\col}[3]{\Omega_{#2}(#3)(#1)}
\newcommand{\colall}[2]{\Omega(#2)(#1)}
\newcommand{\colalls}[1]{\Omega(#1)}
\newcommand{\DM}{\mathrm{DMK}}
\newtheorem{thm}{Theorem}[section]
\newtheorem{prop}[thm]{Proposition}
\newtheorem{lemma}[thm]{Lemma}
\newtheorem{cor}[thm]{Corollary}
\newtheorem{fact}{Fact}
\title[Trivializing the circle in the framed little disks]%
{Homotopically trivializing the circle in the framed little disks}
\author{Gabriel C. Drummond-Cole}
\begin{document}
\maketitle
\begin{abstract}
This paper confirms the following suggestion of Kontsevich. In the appropriate derived sense, an action of the framed little disks operad and a trivialization of the circle action is the same information as an action of the Deligne--Mumford--Knudsen operad.  This improves an earlier result of the author and Bruno Vallette.
\end{abstract}
\section{Introduction}
Configurations of points and disks on a surface have a long history of study in a number of different branches of mathematics.  In some situations, spaces of such configurations can parameterize operations in topological spaces, modules over a ring, or more generally objects in a symmetric monoidal category.  In this point of view, the number of points or disks keeps track of the number of inputs and outputs of the parameterized operation.  Kontsevich suggested~\cite{Kontsevich:T} that a specific relationship should hold between two such operation spaces.  

The first set of operation spaces is the framed little disks~\cite{Getzler:BVATDTFT}, the non-compact moduli space of genus zero surfaces with parameterized (marked) boundary.  The framed little disks generate a subcategory of a certain two-dimensional cobordism category, where the connected morphisms are genus zero surfaces from some number of ``incoming'' circle boundary components to a single ``outgoing'' circle, and so these operations arise as the genus zero, $\itn$-to-one operations of a two dimensional topological conformal field theory.  There are evident circle actions in this setting by rotating the boundary circles. 

The second set of operation spaces is the Deligne--Mumford--Knudsen spaces~\cite{Knudsen:PMSSCIISMGN}, which constitute a compactification of the moduli space of genus zero surfaces with marked points.  An action of these spaces can be considered as the genus zero, $\itn$-to-one operations of a different kind of field theory.  

The relationship between these two spaces of operations, at this genus zero, $\itn$-to-one level, is that an action of the framed little disks can be extended to an action of the Deligne--Mumford--Knudsen spaces if the action of the circle is homotopically trivial, and that the data of such a trivialization of the circle gives rise to an action of the Deligne--Mumford--Knudsen spaces (and homotopically no additional information). This paper works through this statement rigorously, using the language of operads. Such homotopical trivializations of the circle arise in practice. For example, the action of the framed little disks in the field theory determined by the category of sheaves on a smooth projective Calabi-Yau manifold has a homotopically trivial circle action, using Keller~\cite{Keller:OCHRSS}.

The main theorem is the following:
\begin{thm}\label{thm:mainthm}
The Deligne--Mumford--Knudsen genus zero operad $\Mbar$ is a model of the homotopy pushout of the following diagram of operads.
\[\xymatrix{
\thecircle\ar[r]\ar[d]&\trivialoperad\\\fld  
  }\]
Here $S^1$ is the circle, $\trivialoperad$ is the trivial operad, and $\fld$ is the operad of framed little disks.
\end{thm}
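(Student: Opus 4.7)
The plan is to compare both sides inside a model category of topological operads (e.g.\ via a Berger--Moerdijk style model structure), where the homotopy pushout is computed by replacing the span with a cofibrant one and taking the strict pushout. Since $\trivialoperad$ is already cofibrant as the initial operad with a point in arity one, the main task is to resolve the arity-one map $\thecircle\to\fld$ (the rotation map) by a cofibration, and then identify the resulting pushout with $\Mbar$.

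My first step would be to build an explicit cofibrant replacement of $\fld$ using a Boardman--Vogt $\wfunctor$-construction adapted to framed little disks: points are rooted trees whose internal vertices are labelled by configurations of framed little disks, whose internal edges carry a length parameter in $[0,1]$ together with a circle element encoding a gluing rotation, subject to the usual length-$0$ collapse relations. The tautological map $\replacement{\fld}\to\fld$ is a trivial fibration by standard arguments, and the map $\thecircle\to\fld$ lifts tautologically to $\replacement{\fld}$. Because $\thecircle$ is concentrated in arity one and $\trivialoperad(1)$ is a point, the strict pushout
\[
\trivialoperad\coprod_{\thecircle}\replacement{\fld}
\]
admits a concrete tree-theoretic description: each internal edge of a tree in $\replacement{\fld}$ acquires, in addition to its length and rotation parameter, a further parameter that contracts the circle factor to a point. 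Points of the pushout are then trees decorated by framed little disks at vertices and by partial rotation trivializations at edges.

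The second step is to construct a geometric comparison map $\equivtodm$ from this pushout model to $\Mbar$. Each decorated tree is sent to the stable nodal curve with internal vertices giving irreducible components and edges giving nodes; the edge length records the degree of smoothing of the node, and the rotation together with its trivialization encodes the gluing angle (reflecting the fact that a node in a stable curve carries no residual circle of gluing data of its own). Edges of length $0$ correspond to fully smoothed nodes, edges of length $1$ to fully nodal strata. This matches the stratification of $\Mbar$ by stable dual graphs and the product structure of tubular neighborhoods of its strata.

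The final step is to verify that $\equivtodm$ is a weak equivalence in each arity $\itn$, by induction on $\itn$. The low-arity base cases (where $\Mbar(\itn)$ is a point, resp.\ $\mathbb{CP}^1$) can be checked by hand. For the inductive step, cover $\Mbar(\itn)$ by open tubular neighborhoods of its boundary strata, pull this cover back through $\equivtodm$, and use the product decomposition of a neighborhood of a stratum (a product of lower-arity moduli spaces, one factor per node) together with the inductive hypothesis to check equivalence on each piece; a Mayer--Vietoris / homotopy-colimit patching argument then upgrades stratum-wise equivalences to a global one. The main obstacle I anticipate is the higher-codimension corner analysis: one must check that contracting several edges simultaneously in the tree model matches, with all coherences, simultaneous smoothing of several nodes in $\Mbar$, including the compatibility of local product structures and gluing angles. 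This is where the bulk of the technical work will concentrate, and presumably is the strengthening over the earlier result of the author and Vallette.
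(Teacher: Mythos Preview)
Your overall architecture (replace the span by something cofibrant, identify the strict pushout, build a comparison map to $\Mbar$, check it is a weak equivalence stratum by stratum) is the right shape, and the paper follows the same broad outline. But two of your concrete steps contain genuine gaps.

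First, you assert that the lift $\thecircle\to\wfunctor\fld$ along the trivial fibration $\wfunctor\fld\to\fld$ makes the span cofibrant. A lift along an acyclic fibration need not be a cofibration of operads, and in fact the paper does not argue this way. Its Proposition~6.6 instead builds the replacement $\replaced{\thirdop}$ as the pushout of $\secondop\leftarrow\wfunctor\secondop\to\wfunctor\thirdop$, so that the map out of $\secondop$ is a cofibration by Spitzweck's theorem. More importantly, the paper then \emph{avoids} working with $\wfunctor$-models altogether: Theorem~6.7 shows that because $\thecircle$ acts \emph{freely} on $\fld(\finiteset)$ for $|\finiteset|>1$ with a splitting $\fld(\finiteset)\cong\thecircle\times\sspace(\finiteset)\times(\thecircle)^\finiteset$, the \emph{naive} pushout of the variant diagram $\flda\leftarrow\thecircle\to\tld$ (with $\tld$ a contractible monoid containing $\thecircle$) already computes the homotopy pushout. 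This is the key structural input and is what makes the pushout tractable.

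Second, your description of the strict pushout $\trivialoperad\coprod_{\thecircle}\wfunctor\fld$ is not right. Pushing out along $\thecircle\to\trivialoperad$ \emph{imposes relations} (it forces every arity-one circle element to become the identity, so rotated configurations are identified); it does not ``acquire a further parameter that contracts the circle factor to a point.'' You seem to be picturing a mapping-cone construction rather than an operadic coequalizer. Relatedly, your comparison map is underspecified: there is no ``degree of smoothing'' coordinate inside $\Mbar$ itself, so you cannot send an edge length there. The paper handles this by introducing a second, larger pushout $\mbartwo$ (built from $\fld\leftarrow\fldone\to\tld$) whose points are trees of configurations of disks \emph{and} points; radius-zero disks are the nodes, and an explicit suboperad $\td_{\DM}\subset\mbartwo$ is shown to be literally isomorphic to $\Mbar$. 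The weak equivalence between the homotopy-correct pushout $\mbarone$ and $\Mbar$ is then established by an explicit deformation retraction over tree-indexed open neighborhoods of $\Mbar$ (Section~7 and Appendix~C), which is where the hard planar geometry lives. Your inductive Mayer--Vietoris sketch is in the same spirit, but the paper's argument is a direct construction of local retractions rather than an induction on arity.
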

This implies the following result at the level of representations:
\begin{cor}
For any sufficiently cofibrant models $\widetilde{\fld}$ and $\widetilde{\Mbar}$ of the framed little disks and Deligne--Mumford--Knudsen operad, the space of $\widetilde{\fld}$-structures on a space $\spaceb$ with trivial $\widetilde{\fld}(1)$-action is weakly equivalent to the space of $\widetilde{\Mbar}$-structures on $\spaceb$.
\end{cor}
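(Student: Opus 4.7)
The strategy is to apply Theorem~\ref{thm:mainthm} to derived mapping spaces into the endomorphism operad $\mathrm{End}(\spaceb)$. In a suitable (semi-)model structure on topological operads, the space of $\operad$-structures on $\spaceb$ is modeled by the derived mapping space $\mathrm{Map}(\widetilde{\operad},\mathrm{End}(\spaceb))$ for a sufficiently cofibrant replacement $\widetilde{\operad}$ of $\operad$. Once this framework is fixed, the corollary is a formal consequence of the main theorem together with the standard fact that derived mapping out of a homotopy pushout is a homotopy pullback of derived mapping spaces.

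First, Theorem~\ref{thm:mainthm} identifies $\widetilde{\Mbar}$ with a cofibrant model of the homotopy pushout of $\trivialoperad \leftarrow \thecircle \to \fld$. Applying $\mathrm{Map}(-,\mathrm{End}(\spaceb))$ therefore gives a weak equivalence
\[
\mathrm{Map}(\widetilde{\Mbar},\mathrm{End}(\spaceb)) \simeq \mathrm{Map}(\widetilde{\fld},\mathrm{End}(\spaceb)) \times^h_{\mathrm{Map}(\widetilde{\thecircle},\mathrm{End}(\spaceb))} \mathrm{Map}(\widetilde{\trivialoperad},\mathrm{End}(\spaceb)).
\]

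Next, I would identify each factor. The mapping space from the trivial operad is contractible, canonically pointed at the map picking out the identity. The mapping space from $\widetilde{\thecircle}$ is a model for the space of $\thecircle$-actions on $\spaceb$. The natural restriction from $\mathrm{Map}(\widetilde{\fld},\mathrm{End}(\spaceb))$ to $\mathrm{Map}(\widetilde{\thecircle},\mathrm{End}(\spaceb))$ records the underlying $\widetilde{\fld}(1)$-action. Consequently the homotopy pullback collapses to the homotopy fiber of this restriction map over the trivial $\thecircle$-action, which is precisely the space of $\widetilde{\fld}$-structures on $\spaceb$ together with a trivialization of the underlying $\widetilde{\fld}(1)$-action, i.e.\ the space of $\widetilde{\fld}$-structures on $\spaceb$ with trivial $\widetilde{\fld}(1)$-action in the sense of the statement.

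The main obstacle is entirely model-categorical: one must ensure that $\widetilde{\fld}$ and $\widetilde{\Mbar}$ are cofibrant enough that (a) their derived mapping spaces compute the correct homotopy types, (b) the arity-one evaluation interacts well with the cofibrant replacements so that the restriction along $\widetilde{\thecircle} \to \widetilde{\fld}$ really does yield the underlying $\widetilde{\fld}(1)$-action on $\spaceb$, and (c) the abstract homotopy pullback can be presented as a strict homotopy fiber in the chosen model of topological spaces. These are standard coherences for which one would cite the semi-model structure on operads; the genuine mathematical content of the corollary lies in Theorem~\ref{thm:mainthm}.
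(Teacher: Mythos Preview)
Your argument is correct and is the standard way to extract this corollary from Theorem~\ref{thm:mainthm}. The paper itself does not supply a proof: the corollary is stated immediately after the main theorem with the sentence ``This implies the following result at the level of representations,'' and no further justification is given. Your write-up therefore does not so much parallel the paper's proof as fill in the details the paper leaves implicit. The identification of $\mathrm{Map}(\widetilde{\Mbar},\mathrm{End}(\spaceb))$ with the homotopy fiber of the restriction $\mathrm{Map}(\widetilde{\fld},\mathrm{End}(\spaceb))\to\mathrm{Map}(\widetilde{\thecircle},\mathrm{End}(\spaceb))$ over the trivial action is exactly the intended unpacking, and your caveats about the model-categorical hypotheses needed to make this rigorous are appropriate and honest.
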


The main theorem is pleasant, but not unexpected. Different versions of this statement have been discussed by Costello as well as Kontsevich, and a rough outline of a partial argument is present in~\cite{Markarian:HBVD}, although there is no indication of the homotopy invariance there.  A cognate of this theorem was proven at the level of rational homology in~\cite{DotsenkoKhoroshkin:FRGB}~and~\cite{DrummondColeVallette:MMBVO}, relying on earlier work of Getzler~\cite{Getzler:TDTGEC,Getzler:OMSGZRS}.  This theorem is a nice improvement; using adjunctions along the lines of those in~\cite{SchwedeShipley:EMMC}, one can see that the topological story implies the same result at the level of integral, not just rational, homology.  

\section{Executive summary}
This section provides a structured overview of the proof, a roadmap of the remainder of the paper, as well as a few words on possible extensions.

The proof begins with the construction of two operads $\mbarone$ and $\mbartwo$ which are the pushouts of two different models of the diagram above. That is, the following are pushout diagrams.
\[\xymatrix{
\thecircle\ar[d]\ar[r]&\tld\ar[d]&&{\fldone}\ar[r]\ar[d] & {\tld}\ar[d]\\
\flda\ar[r]&\mbarone&&{\fld}\ar[r]&{\td}
}\]
Here $\fldone$ and $\thecircle$ are two different models of the circle, $\tld$ is a model of the trivial operad, and $\fld$ and $\flda$ are two different models of the framed little disks.\footnote{These and all other operads used in the paper are defined in one handy location in the next section.}

There are coherent inclusions of the first diagram into the second:
\[\xymatrix{
\thecircle\ar[d]\ar[r]\ar[dr]&\tld\ar@{=}[dr]\\
\flda\ar[dr]&\fldone\ar[d]\ar[r]&\tld\\
&\fld}\]
which induce a map $\mbarone\to\mbartwo$. The operad $\mbartwo$ is constructed from scratch and then shown in Section~\ref{sec:charm} to be the pushout of the above diagram.

We use two models like in this manner because the former has good homotopical properties while the latter is easy to connect to the Deligne--Mumford--Knudsen operad $\Mbar$. Unpacking the first part of this statement, we achieve the following proposition as a corollary to a more abstract theorem in Section~\ref{sec:hpoa}.
\begin{prop}\label{prop:mbaroneishpush}
The topological operad $\mbarone$ is a realization of the homotopy pushout of the diagram of Theorem~\ref{thm:mainthm}
\end{prop}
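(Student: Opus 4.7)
The plan is to recognize the strict pushout $\mbarone$ as a realization of the homotopy pushout of the diagram in Theorem~\ref{thm:mainthm} by fitting it into a general recognition result. Since the proposition is flagged as a corollary of a more abstract theorem in Section~\ref{sec:hpoa}, my approach would be to prove (and then apply) that abstract theorem, which presumably furnishes sufficient conditions on a span of topological operads under which the strict pushout computes the homotopy pushout in a suitable (e.g.\ Berger--Moerdijk) model structure on operads. The standard template is that one of the two legs should be a cofibration of operads and that the three corners should be cofibrant, which then forces the strict pushout to be a homotopy pushout by the usual gluing lemma.

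First I would verify the identifications at the level of weak equivalences: that $\tld$ is weakly equivalent to $\trivialoperad$ and that $\flda$ is weakly equivalent to $\fld$, with $\thecircle$ modelling itself. These are different presentations of the same homotopy types, and the comparison maps should be supplied by the explicit constructions in Section~\ref{sec:charm}, so this reduces to bookkeeping.

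Next, and more substantively, I would verify that the leg $\thecircle \to \flda$ (or, alternatively, $\thecircle \to \tld$) is a cofibration of topological operads. The whole point of introducing the model $\flda$ is apparently to engineer an explicit cell structure extending that of its unary part $\thecircle$. I would therefore exhibit $\flda$ as a transfinite composite of pushouts along generating cofibrations of operads starting from $\thecircle$, along the lines of the Berger--Moerdijk small object argument; the analogous verification for $\thecircle \to \tld$ is easier since $\tld$ is essentially unary. Once these cofibrations are in place, cobase change makes $\tld \to \mbarone$ and $\flda \to \mbarone$ cofibrations as well, and the abstract theorem from Section~\ref{sec:hpoa} then delivers the conclusion.

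The main obstacle is the cofibrancy analysis. Being a cofibration of topological operads is strictly stronger than being a cofibration of underlying symmetric sequences: one must show that the higher-arity spaces of $\flda$ arise from freely attaching operadic cells to $\thecircle$, compatibly with the symmetric group actions and operadic composition. Doing this rigorously will require a concrete cellular description of $\flda$, and the ``abstract theorem'' in Section~\ref{sec:hpoa} is presumably formulated precisely to take such a cellular description as input and produce the homotopy pushout conclusion.
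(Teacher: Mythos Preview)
Your instinct to reach for the gluing lemma is natural, but the verification you propose is probably doomed: the map $\thecircle\to\flda$ is almost certainly \emph{not} a cofibration of operads in the Berger--Moerdijk model structure. The framed little disks (and their variant $\flda$) are geometric operads with no evident operadic cell structure, and one should not expect them to be cofibrant or to receive an operadic cofibration from $\thecircle$. Your guess that the abstract theorem in Section~\ref{sec:hpoa} takes a cellular description as input is wrong.

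What the paper actually exploits is that $\thecircle=\flda(1)$ is a group acting \emph{freely} on each $\flda(\finiteset)$ for $|\finiteset|>1$, with a splitting $\flda(\finiteset)\cong \thecircle\times\tdrtwo(\finiteset)\times(\thecircle)^{\finiteset}$. Under this hypothesis Theorem~\ref{thm:pushoutshape} gives an explicit tree-based formula $\colalls{\tdrtwo,\thecircle,\tld}$ for the strict pushout, assembled as an iterated pushout of \emph{spaces}. Proposition~\ref{prop:onetotwo} shows this formula is homotopy invariant: replacing the inputs by weakly equivalent ones (still satisfying the free-action hypothesis) yields a weakly equivalent output. The only cofibrancy used is space-level---that $\thecircle\hookrightarrow\tld$ is a cofibration of pointed spaces and that the quotients $\tdrtwo(\finiteset)$ are cofibrant spaces---plus cofibrancy as pointed collections, arranged by multiplying with an $E_\infty$ operad (Lemma~\ref{lemma:einf}). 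A genuinely homotopically correct model of the span is then manufactured via the Boardman--Vogt $\wfunctor$-construction (Proposition~\ref{prop:firstpushout}), and its pushout is compared back to the strict one using the invariance statement. So the idea that replaces operadic cofibrancy is the freeness of the circle action on configurations of two or more disks; this is exactly what fails in higher dimensions and in the presence of a unit, as the paper remarks.
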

Unpacking the second, we describe a suboperad $\td_{\DM}$ of $\mbartwo$ and prove the following in Section~\ref{sec:isdm}.
\begin{prop}\label{prop:dmisdm}
The operads $\td_{\DM}$ and $\Mbar$ are isomorphic.
\end{prop}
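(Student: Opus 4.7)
The plan is to construct an explicit operad isomorphism by interpreting the stable locus of $\td$ geometrically as a moduli space of stable nodal genus zero curves with marked points. A point of $\td=\mbartwo$ is represented by a configuration of framed little disks modulo the equivalence relation collapsing the circle action at each boundary, per the pushout defining $\mbartwo$. The suboperad $\td_{\DM}$ should consist of those configurations satisfying the classical stability condition---every component (the outer disk or any inscribed disk, viewed as hosting inputs at the places where sub-disks sit and an output at its own parameterized boundary) carries at least three special points. This is the natural candidate for the suboperad matching $\Mbar$.

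Given such a stable configuration, I would define a map $\Psi\colon\td_{\DM}\to\Mbar$ as follows. Form a nodal Riemann surface by treating the outer disk and each inscribed disk as a copy of $\mathbb{CP}^1$, with each disk-within-a-disk inclusion giving rise to a node; marked points of the resulting curve are the marked points of the innermost configuration together with the distinguished output point on each component. Because the action of $\thecircle$ has been trivialized in passing to $\mbartwo$, there is no ambiguity in the framing at a node, and the resulting nodal curve is well-defined up to biholomorphism. Operadic composition on both sides is given by gluing along a marked point or node, which makes $\Psi$ an operad map essentially by construction.

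The main obstacle will be establishing that $\Psi$ is a homeomorphism. Injectivity requires showing that two stable disk configurations producing the same nodal curve are already identified in $\mbartwo$; the stability condition prevents the existence of continuous families of reparameterizations that would allow non-unique presentations on the disk-configuration side, mirroring the absence of non-trivial automorphisms of stable genus zero marked curves. Surjectivity can be proven by induction on the number of irreducible components of a stable curve, realizing each such curve as a nested configuration. Continuity of $\Psi$ is straightforward from the pushout topology on $\mbartwo$, while continuity of the inverse amounts to showing that charts near boundary strata of $\Mbar$---families of curves degenerating by pinching---lift to families of disk configurations in which disks collide or shrink. I would handle this last step stratum by stratum, matching plumbing coordinates on $\Mbar$ with nesting-degeneration parameters on $\td_{\DM}$.
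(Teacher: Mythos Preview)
Your overall strategy---define an explicit map $\Psi\colon\td_{\DM}\to\Mbar$ by reading off a nodal curve from the decorated tree, check it is an operad map, then argue bijectivity and bicontinuity---is the same shape as the paper's argument, and your description of the map matches the paper's $\equivtodm$: forget the bivalent root and leaf vertices, and the remaining decorations are exactly points in $\msph(\insof(\vertex))$.

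The substantive difference is in how you propose to establish that $\Psi$ is a homeomorphism. You plan to prove continuity of the inverse directly, ``stratum by stratum, matching plumbing coordinates on $\Mbar$ with nesting-degeneration parameters on $\td_{\DM}$.'' The paper avoids this entirely. Instead it proves (Proposition~\ref{prop:isamanifold}) that $\td_{\DM}(\finiteset)$ is locally homeomorphic to $\rR^{2|\finiteset|-4}$, the same dimension as $\Mbar(\finiteset)$, and then invokes invariance of domain: a continuous bijection between manifolds of the same dimension is automatically a homeomorphism. This is a significant savings---your proposed step is exactly the sort of thing that is routinely asserted and rarely carried out carefully, and the paper's shortcut is worth knowing.

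A minor point: your description of $\td_{\DM}$ as ``configurations satisfying the classical stability condition'' is not quite the paper's definition. In the paper $\td_{\DM}$ is specified by a tree-level condition (the root vertex and every leaf-successor vertex must be bivalent, with the root decorated by $(0,0)$), and the stability of the interior vertices is a consequence of $\td$ already being built on nearly stable trees. Your geometric description is morally the same object, but when you come to check bijectivity and continuity you will need the precise combinatorial definition.
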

\begin{prop}\label{prop:deformationretract}The space $\td_{\DM}(\finiteset)$ is a deformation retract of the space $\td(\finiteset)$.
\end{prop}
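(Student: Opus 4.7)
The plan is to build an explicit strong deformation retraction using the pushout description of $\td=\mbartwo$ provided in Section~\ref{sec:charm}. A point of $\td(\finiteset)$ can be presented as an element of $\fld(\finiteset)$ together with data recording which input framings have been identified with the basepoint via the quotient $\fldone\to\tld$; the subspace $\td_{\DM}(\finiteset)$ is then cut out by the condition that all surviving framing data has been placed in a canonical normalized form (e.g.\ with a distinguished angular parameter determined by the configuration). The goal is to continuously deform an arbitrary point of $\td(\finiteset)$ to its normalized form by simultaneously rotating the framings.

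Concretely, I would define the retraction $r\colon\td(\finiteset)\to\td_{\DM}(\finiteset)$ by rotating each framing circle to its canonical position, and define the homotopy $H_\timez\colon\td(\finiteset)\times[0,1]\to\td(\finiteset)$ by applying the fraction $\timez$ of the required rotation at time $\timez$. By construction $H_0=\id$, $H_1=i\circ r$, and $H_\timez$ restricts to the identity on $\td_{\DM}(\finiteset)$ for every $\timez$, so the retraction is strong.

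The delicate step is continuity, which requires that the homotopy descend along the defining pushout quotient. On the stratum where framings have been identified with the basepoint via $\fldone\to\tld$, the circle acts trivially, so $H_\timez$ is automatically well defined. On the $\fld(\finiteset)$ stratum, the action of $\fldone$ is continuous by construction. What must be verified is that these two descriptions agree on overlapping strata so that $H_\timez$ is continuous globally on the pushout; this in turn reduces to showing that the canonical angle assigned to each input depends continuously on the underlying configuration and limits correctly as one passes from uncollapsed to collapsed strata.

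The principal obstacle is therefore choosing the normalization carefully enough that the required rotation is a continuous function of the point in $\td(\finiteset)$, including across strata of different dimensions meeting at the boundary of a collapse. Once the normalization is in hand, the remainder of the argument is essentially formal: the endpoints of $H_\timez$, its continuity in $\timez$, and its restriction to $\td_{\DM}(\finiteset)$ all follow from standard properties of the circle action, since we are really just using the $\fldone$-action on $\fld$ within the pushout description and invoking the fact that this action has become trivial on the collapsed locus.
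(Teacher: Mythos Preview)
Your proposal rests on a mischaracterization of $\td_{\DM}(\finiteset)$. You describe it as the locus where ``all surviving framing data has been placed in a canonical normalized form (e.g.\ with a distinguished angular parameter),'' and accordingly you propose a homotopy that \emph{rotates} framings toward a canonical angle. But that is not how $\td_{\DM}$ is defined. By definition, for $|\finiteset|>1$ a point of $\td_{\DM}(\finiteset)$ is a point of $\td(\finiteset)$ whose underlying tree has a bivalent root vertex decorated by $(0,0)$ and a bivalent successor vertex for every leaf. In other words, the external framing/radius data at the root and at each leaf has been \emph{collapsed to radius zero}, not rotated to a preferred argument. A rotation-based homotopy will never land in $\td_{\DM}$: rotating $(\cent,\radi)$ changes the argument of $\radi$ but leaves $|\radi|$ positive, so the leaf and root vertices remain at least trivalent and the image stays outside $\td_{\DM}$. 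You may be conflating $\td_{\DM}$ with the normalized slice $\tdrtwo$, where angular normalization really is the defining condition.

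The paper's argument is different and much shorter. The retraction is obtained by pre- and postcomposing with the trivialized annulus $(0,0)\in\tld$ at every leaf and at the root; this forces the external radii to zero and creates the required bivalent vertices. The homotopy is then $H(\psplone,\timez)=$ (pre- and postcompose $\psplone$ with $(0,\timez)$ at the external edges): at $\timez=1$ the annulus $(0,1)$ is the operadic unit so $H_1=\id$; at $\timez=0$ one recovers the retraction; and for points already in $\td_{\DM}$ the bivalent external vertices absorb $(0,\timez)$ so $H_\timez$ fixes $\td_{\DM}$ for all $\timez$. Continuity is immediate from continuity of operadic composition in $\td$, with no need to choose or track a canonical angle across strata. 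The operation is radial shrinking, not rotation; once you make that correction the delicate continuity issue you anticipate disappears.
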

So we get a diagram like this:
\[
\entrymodifiers={+!!<0pt,\fontdimen22\textfont2>}\xymatrix{
\mbarone\ar[r]&\mbartwo\ar@<.75ex>@{.>}[r]^-\sim&\td_{\DM}\ar@<.75ex>[l]\ar@{<->}[r]^-\cong&\Mbar
}\]
where solid arrows are maps of operads. 

To complete the proof, it is only necessary to show that the induced map $\mbarone\to\mbartwo$ is a weak equivalence. It turns out to be easier to show the following equivalent statement, which is done in Section~\ref{sec:triv} and Appendix~\ref{appendix:homotopy}:

\begin{thm}\label{thm:toptheoremtwo}
The map of collections $\psi:\mbarone\to\mbartwo\to \Mbar$ is a weak equivalence.
\end{thm}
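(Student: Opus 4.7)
The plan is to show $\psi: \mbarone(\finiteset) \to \Mbar(\finiteset)$ is a weak equivalence for each finite set $\finiteset$. The map $\psi$ is the composite of $\mbarone\to\mbartwo$ with the deformation retraction of $\mbartwo$ onto $\td_{\DM}\cong\Mbar$ from Propositions~\ref{prop:dmisdm}~and~\ref{prop:deformationretract}, so the source side is the one requiring analysis. I would give an explicit description of $\mbarone(\finiteset)$ in terms of a stratification by $\finiteset$-labeled trees and match the resulting stratification with the Deligne--Mumford stratification of $\Mbar(\finiteset)$ by dual graphs of stable nodal genus zero curves. Since $\mbarone$ is built as an operadic pushout in which the circle actions at inputs of $\flda$ are trivialized, each internal edge of a tree in the stratification should contribute a piece representing the trivialization of the circle associated to the node it cuts.

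Concretely, I would first unfold the pushout defining $\mbarone$ to exhibit $\mbarone(\finiteset)$ as a quotient of a coproduct over $\finiteset$-labeled trees $\tree$, where each $\tree$ contributes $\prod_{\vertex} \flda(\insof(\vertex))$ at its vertices together with a cone-like factor at each internal edge coming from the relation $\thecircle\to\tld$. Second, I would describe the image of each such tree-indexed piece under $\psi$ as the boundary stratum of $\Mbar(\finiteset)$ labeled by the dual graph $\tree$. The key geometric point is that quotienting the framed configuration at a node by the circle action exactly recovers the unframed nodal gluing appearing in $\Mbar$, so the map restricted to each stratum is a weak equivalence (in fact, a homeomorphism up to the identification of $\flda(\insof(\vertex))/(\thecircle)^{\insof(\vertex)}$ with $\msph(\insof(\vertex))$).

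The proof then proceeds by induction on the number of internal edges of the tree, i.e.\ by building up the target stratum-by-stratum in order of increasing codimension. Using the cofibration structure of nested strata together with the gluing lemma for weak equivalences, one can promote the stratumwise equivalence to a global weak equivalence. The main obstacle is the bookkeeping identifying the attaching maps: one must verify that the way the cone pieces corresponding to internal edges are glued into $\mbarone(\finiteset)$ matches the way boundary strata of $\Mbar(\finiteset)$ limit to one another, and this is presumably where the explicit homotopies of Appendix~\ref{appendix:homotopy} are needed. I would expect these homotopies to track, for each internal edge, the degeneration from a circle's worth of framings to a single point, compatibly with further nodal degenerations occurring on either side of the node.
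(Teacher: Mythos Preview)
Your strategy is a plausible outline but differs substantially from the paper's proof and has a real gap at the gluing step.

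The paper does \emph{not} argue stratum-by-stratum over the closed Deligne--Mumford stratification. Instead it constructs an \emph{open} cover of $\Mbar(\finiteset)$ by what it calls $\tree$-neighborhoods $\nei_\tree$ (open sets containing, but strictly larger than, the $\tree$-stratum), checks this cover is closed under finite intersection, and then invokes the local-to-global lemma (Lemma~\ref{lemma:folklore}) that a map which is a weak equivalence on all preimages of such a cover is a global weak equivalence. The substance of the argument is then to exhibit, for each $\nei_\tree$, an explicit deformation retraction of $\longproj^{-1}(\nei_\tree)$ onto $\nei_\tree$ \emph{over} $\nei_\tree$. This is done via an intermediate space $\mbarthree_\tree$ and a concrete section $\secty$ built from centers of mass and carefully chosen radii; the homotopy between the identity and $\secty\circ\longmap$ is a specific geometric flow on pairs of overlapping disks. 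Appendix~\ref{appendix:homotopy} is entirely about showing this disk-flow preserves nesting and disjointness---it is planar geometry of circles, not a statement about degenerating framings at nodes.

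Your proposal, by contrast, wants to match closed strata and then glue. The assertion that the restriction to each stratum is ``a homeomorphism up to the identification of $\flda(\insof(\vertex))/(\thecircle)^{\insof(\vertex)}$ with $\msph(\insof(\vertex))$'' is not right: the quotient $\thecircle\backslash\flda(\insof(\vertex))/(\thecircle)^{\insof(\vertex)}$ is $\tdrtwo(\insof(\vertex))$, a space of normalized disjoint \emph{disks} in the unit disk, not points in $\cC$ up to $\affc$. These are homotopy equivalent but not homeomorphic, and the stratum in $\mbarone$ also carries the extra $\tld$-factors on edges. So already stratumwise you only have a weak equivalence, and the gluing step then requires control of how these equivalences interact with the attaching maps---precisely the issue you defer to ``the gluing lemma for weak equivalences'' without checking its hypotheses. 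Verifying that $\mbarone(\finiteset)$ is built as an iterated homotopy pushout compatible with the Deligne--Mumford filtration is essentially the whole problem; the paper sidesteps this by working with open neighborhoods where one can write down explicit retractions. If you pursue your route you will need, at minimum, tubular-neighborhood-type control of the strata on the $\mbarone$ side, which amounts to reconstructing much of what the paper does anyway.
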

This does not mean that $\mbartwo$ is superfluous in the proof; if it were not there we would have no guarantee that the weak equivalence of collections of spaces $\longproj$ arose from a zigzag of weak equivalences of operads.

The proof of theorem~\ref{thm:toptheoremtwo} is an intricate calculation of planar geometry, using some techniques of classical algebraic topology.

There are two other appendices containing a model category theory background and a brief description of the slightly nonstandard conventions used for trees.

Some words on possible extensions are in order.  It would be nice to extend this result to the full cobordism category. That is to say, this theorem is about operadic algebra of genus zero surfaces. The cognate statement about properadic algebra of surfaces of arbitrary genus should also be true. The results of Section~\ref{sec:hpoa} should hold with little modification, but there seem to be some technical difficulties in generating the correct version of $\mbartwo$ in that setting.  Further, the argument for the equivalence of the two pushouts would almost certainly need to be completely replaced.  

It is also natural to wonder how much of this holds for the framed $\itn$-balls, for $\itn>2$, viewed as a relatively much smaller subcategory of the $\itn$-cobordism category.  There the results of Section~\ref{sec:hpoa} fail completely, as the actions of $SO(\itn)$ on the framed $\itn$-balls are not free, and it seems unlikely that the homotopy quotient and naive quotient should coincide.  

The presence of a unit in the framed little disks operad also creates problems for Section~\ref{sec:hpoa}.  The unit disrupts the filtration of the framed little disks by arity, so that the space of one little disk cannot be restricted to the circle alone.  This may only be a technical problem, but the unit also has the same problem as higher dimensional balls: the action of the circle is not free.  This probably implies a space of units in the homotopy pushout equivalent to $B\thecircle$.  It is not clear to the author how this should be reflected in (a modification of) the geometric compactification.

\section{Variations on the framed little disks}\label{sec:opzoo}
The purpose of this section is to define the various operads that will be used in the sequel. References for operadic algebra include~\cite{May:GILS,Smirnov:HTC,GetzlerJones:OHAIIDLS}.
\begin{defi}
The group $\affc$ is $\cC\rtimes GL_1\cC\cong \cC\rtimes\cC^\ast$, which acts on the complex plane $\cC$ by translation, dilation, and rotation.

In coordinates, if the action is taken to be rotation and dilation first, followed by translation, then the group action is 
\begin{equation*}(\cent_1,\radi_1)\comp(\cent_2,\radi_2)=(\cent_1+\radi_1\cent_2, \radi_1\radi_2).\end{equation*}  The inverse of $(\cent,\radi)$ is $(-\frac{\cent}{\radi},\frac{1}{\radi})$ and the identity is $(0,1)$.

The topological group $\affc$ can also be identified with the configuration space of a disk with a marked point on its boundary in the plane; the element $(\cent,\radi)$
in this presentation corresponds to the disk centered at $\cent$ with a marked point on its boundary at $\cent+\radi$.  In this context, the composition map involves scaling a disk up or down and rotating it.  The identity corresponds to the standard disk.
\end{defi}
\begin{defi}
Let $\finiteset$ be a finite set. The {\em configuration space of $\finiteset$ framed disks in the plane} is:
\begin{equation*}
\conf(\finiteset)=\{(\cent_\element,\radi_\element)_{\element\in \finiteset}\in (\cC\times \cC^\ast)^\finiteset:|\cent_\element-\cent_{\element'}|> |\radi_\element|+|\radi_{\element'}|\}.
\end{equation*} 
The condition ensures that every two disks in the plane are disjoint.

The {\em configuration space of $\finiteset$ framed disks or points in the plane} is:
\begin{equation*}
\conft(\finiteset)=\{(\cent_\element,\radi_\element)_{\element\in \finiteset}\in (\cC\times \cC)^\finiteset:|\cent_\element-\cent_{\element'}|> |\radi_\element|+|\radi_{\element'}|\}.
\end{equation*}   
Let $\comp$ denote the composition map $(\cent_1,\radi_1)\comp (\cent_2,\radi_2)\mapsto (\cent_1+\radi_1\cent_2,\radi_1\radi_2)$ of $\affc$; by abuse of notation, 
 we will use $\comp$ to denote as well its restriction to various related spaces as well. 
\end{defi}
\begin{defi}\label{defi:fld1}The operad $\fld$ of {\em framed little disks} consists of the following spaces:
$\fld(\finiteset)$ is the subset of $\conf(\finiteset)$ such that:
\begin{enumerate}
\item $|\cent_\element|+|\radi_\element|\le 1$, and
\item this inequality is strict unless $\cent_\element=0$.
\end{enumerate}
We will use the custom that $\fld(\emptyset)$ is empty.  Note that for $|\finiteset|>1$, the conditions imply that $|\cent_\element|+|\radi_\element|<1$.

Geometrically, the $\cent_\element$ correspond to the centers of disjoint little disks in the large disk of radius one in the plane, each having radius $|\radi_\element|$ and with a marked point at $\cent_\element+\radi_\element$.

There is an evident action of the symmetric group of the set $\finiteset$ on the factors of $\conf(\finiteset)$.  There are also composition maps derived from $\comp$; explicitly, we can compose the framed little disks $((\cent_{\element'},\radi_{\element'}))$ into the factor $(\cent_{\hat{\element}},\radi_{\hat{\element}})$ of $((\cent_{\element},\radi_{\element}))$ as follows. Using the shorthand $\disk_\element$ to refer to the pair $(\cent_\element,\radi_\element)$,
\begin{equation*}
\{\disk_\element\}_{\element\in\finiteset}\cyrc_{\hat{\element}}\{\disk_{\element'}\}_{\element'\in\finiteset'}=
\{\disk_\element\}_{\element\in\finiteset,\element\ne \hat{\element}}\sqcup\{\disk_{\hat{\element}}\comp\disk_{\element'}\}_{\element'\in\finiteset'}.
\end{equation*}
So we replace $(\cent_{\hat{\element}},\radi_{\hat{\element}})$ with the factors $(\cent_{\hat{\element}},\radi_{\hat{\element}})\comp (\cent_{\element'},\radi_{\element'})$.
\end{defi}
\begin{defi}\label{defi:fld2}The operad $\flda$ of {\em annulus-free framed little disks} consists of the suboperad of $\fld$ so that if $|\finiteset|=1$ then $\flda(\finiteset)$ consists only of points on the unit circle, $(0,\radi)$ with $|\radi|=1$.  This is stable under composition.
\end{defi}
\begin{defi}\label{defi:tdrtwo}Let $\finiteset$ be an ordered finite set.  For convenience, suppose $\finiteset=\{1,2,\ldots\}$. The subspaces $\tdrtwo(\finiteset)$ of $\flda(\finiteset)$ consist of those points of $\flda(\finiteset)$ where $\cent_2-\cent_1\in\rR_+$ and $\radi_\element\in\rR_+$ (the first condition is empty for $|\finiteset|<2$).
\end{defi}
\begin{defi}\label{defi:ann}We denote by $\fldone$ the arity one part of $\fld$.  That is, $\fldone(\finiteset)$ is $\fld(\finiteset)$ if $|\finiteset|=1$ and is empty otherwise.

The operad $\thecircle$ is the complex units under multiplication, concentrated in arity one. That is, $\thecircle(\finiteset)$ is the complex units if $|\finiteset|=1$ and is empty otherwise. The operad $\thecircle$ can be viewed as a suboperad of $\fldone\subset\fld$ or $\flda$.
\end{defi}
\begin{remark}
The space $\fld(\finiteset)$ is acted on by $\thecircle$ on the left and $(\thecircle)^\finiteset$ on the right. Since $\thecircle$ is a suboperad of $\fld$, this action is just by composition. Acting by $\complexunit$ on the left and $\prod \complexunit_\element$ on the right takes the collection $\{(\cent_\element,\radi_\element)\}$ to $\{(\complexunit\cent_\element,\complexunit\complexunit_\element\radi_\element)\}$. Geometrically, the action on the right corresponds to rotating each marked point along the circle boundary of the disk indexed by $\element$ by the argument of $\complexunit_\element$. The action on the left corresponds to rotating the entire configuration by the argument of $\complexunit$.
\end{remark}
\begin{lemma}
The composition $\tdrtwo(\finiteset)\to \flda(\finiteset)\to \thecircle\backslash\flda(\finiteset)/(\thecircle)^{\finiteset}$ is a homeomorphism.
\end{lemma}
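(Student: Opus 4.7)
The plan is to prove bijectivity by exhibiting a canonical $\tdrtwo$-representative of each $\thecircle\times(\thecircle)^\finiteset$-orbit, and then upgrade this to a homeomorphism by realizing the selection as an explicit continuous orbit-invariant retraction $\flda(\finiteset)\to\tdrtwo(\finiteset)$.

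First I would dispatch the low arities directly. The case $|\finiteset|=0$ is vacuous. For $|\finiteset|=1$ the space $\flda(\{1\})$ is the circle of points $(0,\radi)$ with $|\radi|=1$, on which either circle factor of the action is already free and transitive, so the double quotient is a single point, matching $\tdrtwo(\{1\})=\{(0,1)\}$.

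For $|\finiteset|\ge 2$, disjointness of the disks indexed by $1$ and $2$ guarantees $\cent_2-\cent_1\ne 0$, so there is a unique $\complexunit\in\thecircle$ with $\complexunit(\cent_2-\cent_1)\in\rR_+$, namely $\complexunit=|\cent_2-\cent_1|/(\cent_2-\cent_1)$. Then for each $\element\in\finiteset$ the condition $\radi_\element\in\cC^\ast$ inherited from $\conf$ picks out the unique $\complexunit_\element\in\thecircle$ making $\complexunit\complexunit_\element\radi_\element\in\rR_+$. The resulting translate lies in $\tdrtwo(\finiteset)$ and is the unique orbit representative there, so the composition in the statement is a bijection.

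Continuity of the forward map comes for free from the subspace and quotient topologies. For continuity of the inverse, I would package the selection above as an explicit map $\flda(\finiteset)\to\tdrtwo(\finiteset)$ sending $(\cent_\element,\radi_\element)_\element\mapsto(\complexunit\cent_\element,|\radi_\element|)_\element$. Because its only denominator $\cent_2-\cent_1$ never vanishes, the formula is manifestly continuous; it is a retraction onto $\tdrtwo(\finiteset)$, and a direct computation using the action formulas shows that both $\complexunit$ and the pair $(\complexunit\cent_\element,|\radi_\element|)$ are invariant under $\thecircle\times(\thecircle)^\finiteset$. It therefore descends through the double quotient and supplies the desired continuous inverse. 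The only real obstacle is the careful separation of the unary case, where the offset direction $\cent_2-\cent_1$ is unavailable; past that, the lemma is essentially a direct computation.
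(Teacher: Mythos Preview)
Your proposal is correct and follows essentially the same approach as the paper: both produce the inverse by writing down the explicit continuous map $\flda(\finiteset)\to\tdrtwo(\finiteset)$ that rotates by $|\cent_2-\cent_1|/(\cent_2-\cent_1)$ on the left and normalizes each $\radi_\element$ on the right, then observe that this is constant on orbits and hence descends to the double quotient. Your treatment is slightly more careful in separating out the arity $0$ and $1$ cases explicitly, which the paper leaves implicit.
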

\begin{proof}
Acting on $\flda$ on the left by some element of $\thecircle$ and simultaneously on the right by some element $(\thecircle)^\finiteset$ takes $\cent_2-\cent_1$ and $\radi_\element$ to $\rR_+$.  These elements of $\thecircle$ are $\frac{|\cent_2-\cent_1|}{\cent_2-\cent_1}$ and $\frac{|\radi_\element|(\cent_2-\cent_1)}{\radi_\element|\cent_2-\cent_1|}$.  Thus, there is a map from $\flda(\finiteset)$ to $\thecircle\times (\thecircle)^\finiteset$ picking out these elements, which gives a map from $\flda(\finiteset)$ to $\tdrtwo(\finiteset)$ by composing on the left and right by these elements of $\thecircle$.  This map is stable on the equivalence classes defining the quotient, so descends to an inverse of the map described above.
\end{proof}
\begin{defi} The {\em trivialized annuli operad} $\tld$ is the operad such that $\tld(\finiteset)$ is the subset of $\conft(\finiteset)$ such that
$|\cent_\element|+|\radi_\element|\le 1$ if $|\finiteset|=1$ and $\tld(\finiteset)$ is empty otherwise.  This operad is a ``trivialization'' of $\fldone$ and/or $\thecircle$ and contains both as suboperads.
\end{defi}
\begin{defi}
The operad $\mbarone$ is the pushout of the following diagram of operads, where the maps are inclusion:
\[\xymatrix{
\thecircle\ar[d]\ar[r]&\tld\\
\flda
}\]
\end{defi}
The three remaining operads we will define in this section ($\mbartwo$, $\td_{\DM}$, and $\Mbar$) are operads of decorated trees. That is, if $\operad$ denotes one of these three operads, then a point in $\operad(\finiteset)$ is a point in a product of configuration spaces indexed by the vertices of some $\finiteset$-tree. In each case, composition is some variant of the tree grafting operation, although for the first two it is slightly more complicated than that. Throughout, we use the notation and terminology of Appendix~\ref{appendix:trees} for trees.

\begin{defi}
Let $\vertex$ be a vertex of a non-planar unreduced rooted $\finiteset$-tree $\tree$.  If $\vertex$ is the root, then the marking set $\marking_\vertex$ is the subset of $\conft(\edgesof(\vertex))$ such that:
\begin{enumerate}
\item $0<|\radi_\element|$ if $\radi_\element$ is in a factor $(\cent_\element,\radi_\element)$ indexed by an external edge of $\tree$,
\item $\radi_\element=0$ if $\radi_\element$ is in a factor indexed by an internal edge of $\tree$,
\item $|\cent_\element|+|\radi_\element|\le 1$ for every factor in the product, and
\item this inequality is strict unless $\cent_\element=0$.
\end{enumerate}
If $\vertex$ is not the root, then $\marking_\vertex$ is the subset of $\conft(\edgesof(\vertex))/\affc$ such that 
\begin{enumerate}
\item $0<|\radi_\element|$ if $\radi_\element$ is in a factor indexed by an external edge of $\tree$, and
\item $\radi_\element=0$ if $\radi_\element$ is in a factor indexed by an internal edge of $\tree$.
\end{enumerate}
Here, the $\affc$ action is on all factors of $\conft(\edgesof(\vertex))$ by left multiplication with $\comp$.

Both marking sets are configuration spaces of disjoint little disks indexed by the external edges in centered at $\cent_\element$ of radius $|\radi_\element|$ with one marked point on the boundary circle at $\cent_\element+\radi_\element$ along with additional disjoint marked points at $\cent_\element$ indexed by the internal edges.

The marking for the root is such configurations in the disk; the marking for other vertices is such configurations in the plane up to conformal automorphisms of the plane.
\end{defi}
\begin{defi}
The {trivialized little disks operad} $\td$ consists of the sets (for now) $\td(\finiteset)$ where $\td(\finiteset)$ is
\[
\coprod_\tree \prod_{\vertex\in \verticesof(\tree)}\marking_\vertex.\]
Here the disjoint union is over nontrivial nearly stable $\finiteset$-trees $\tree$.
\end{defi}

\begin{figure}[hb!]
\begin{pspicture}(-5,0)(5,5.5)
\psline(0,0)(0,3)
\psline(0,1)(-1,2)
\psline(0,1)(1,2)
\psline(0,3)(-1,4)
\psline(0,3)(1,4)
\psline(-1,4)(-1,5)
\rput(0,3){$\bullet$}
\rput(0,1){$\bullet$}
\rput(-1,4){$\bullet$}
\pscircle[fillcolor=lightgray, fillstyle=solid](4,2){2}
\pscircle[fillcolor=white, fillstyle=solid](3.5,1.5){1}
\pscircle[fillcolor=white, fillstyle=solid](5,3){.5}
\rput(6,2){$\bullet$}
\rput(3.5,2.5){$\bullet$}
\rput(5,2.5){$\bullet$}
\rput(3.5,1.5){$1$}
\rput(5,3){$4$}
\rput(4,2.8){$\bullet$}
\rput(4,3.1){$\{2,3\}$}
\rput(-1,5.25){$2$}
\rput(1,4.25){$3$}
\rput(-1,2.25){$1$}
\rput(1,2.25){$4$}
\psline{->}(2,1.5)(.15,1.05)
\psline{->}(-1.4,4)(-1.1,4)
\psline{->}(-1.4,2.5)(-.1,3)
\psframe*[linecolor=lightgray](-1.5,5.5)(-4,3)
\psframe*[linecolor=lightgray](-1.5,0)(-4,2.5)
\pscircle[fillcolor=white, fillstyle=solid](-2.75,4.25){1}
\rput(-2.75,4.25){$2$}
\rput(-1.75,4.25){$\bullet$}
\pscircle[fillcolor=white, fillstyle=solid](-2.1,1.25){.3}
\rput(-2.1,1.25){$3$}
\rput(-2.2248,1.5228){$\bullet$}
\rput(-2.75,1.25){$\bullet$}
\rput(-2.75,1.55){$2$}
\end{pspicture}
\caption{An element of $\td(\{1,2,3,4\})$. We refer to each edge by the set of leaves above it, conflating $\element$ with $\{\element\}$ for ease.}
\end{figure}

The composition in $\td$ is as follows: to glue a disk into a little disk in the plane up to conformal automorphism, scale the disk and glue it in using $\comp$.  This is independent of the conformal representative.  The other vertices of the tree involved do not change any decorations.  If the resulting configuration in the plane is a single point then forget the corresponding tree vertex. 

Given this composition structure, the tree with one leaf and one vertex decorated by $(0,1)$ is a unit for composition.  It is routine to verify that composition is associative, since $\comp$ is.

\begin{defi}
We define $\td_{\DM}(\finiteset)$ to be the subspace of $\td(\finiteset)$ characterized by the following.
\begin{enumerate}
\item If $|\finiteset|=1$, then $\td_{\DM}(\finiteset)$ is just the identity configuration $(0,1)$ decorating the $1$-corolla in $\td(\finiteset)$.
\item If $|\finiteset|>1$, then in the tree $\tree$ underlying any element of $\td_{\DM}(\finiteset)$, the successor vertex of each leaf and the root vertex must each be bivalent. Further, the decoration of the root vertex must be the single configuration $(0,0)$.
\end{enumerate}
\end{defi}
\begin{lemma}
$\td_{\DM}$ is a suboperad of $\td$.
\end{lemma}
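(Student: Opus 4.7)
The plan is to verify the three requirements for a suboperad: containing the unit, closure under the symmetric group action, and closure under operadic composition. The first is immediate from clause (1) of the definition of $\td_{\DM}$, and the second is clear because the defining conditions refer only to the tree's combinatorial structure and the root decoration, both invariant under relabeling. All the substance is in the composition closure.

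Let $x \in \td_{\DM}(\finiteset)$ and $y \in \td_{\DM}(\finiteset')$, and consider $x \cyrc_{\hat{\element}} y$ for some $\hat{\element} \in \finiteset$. If $|\finiteset|=1$ or $|\finiteset'|=1$, one of $x,y$ is the unit, so the composition reduces to the other factor, which lies in $\td_{\DM}$. Assume henceforth $|\finiteset|, |\finiteset'| > 1$, and write $\vertex$ for the successor of $\hat{\element}$ in the underlying tree of $x$ and $\vertextwo$ for the root of the underlying tree of $y$. Two combinatorial observations organize the argument. First, $\vertex$ is not the root of $x$'s tree: if it were, the bivalent root would have the leaf-$\hat{\element}$ edge as its only non-root edge, so only the leaf $\hat{\element}$ could be reached, contradicting $|\finiteset|>1$. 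Second, $\vertextwo$ is not the successor of any leaf of $y$'s tree: a bivalent root whose sole non-root edge were a leaf edge would force $|\finiteset'|=1$. These imply that the root of the composite tree and the successors of leaves other than $\hat{\element}$ are untouched by the graft.

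Next, I analyze the effect of the graft together with the forget-single-point rule. When $\vertextwo$ is reinterpreted as a non-root vertex of the grafted tree, its root decoration $(0,0)$ corresponds to a single $\affc$-orbit of marked points in the plane, which is a single point. The rule therefore removes $\vertextwo$ and merges its two incident edges into a direct edge from $\vertex$ to the upper neighbor $w$ of $\vertextwo$ in $y$'s tree. The bivalence conditions then hold everywhere required: the root of the composite inherits the edges and $(0,0)$ decoration from the root of $x$; each successor of a leaf from $\finiteset\setminus\{\hat{\element}\}$ is its successor in $x$, still bivalent; and each successor of a leaf from $\finiteset'$ is either $w$, whose edge count is unchanged by the merge, or a vertex strictly above $w$. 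The main obstacle is confirming that the forget rule applies only at $\vertextwo$ and does not cascade: $\vertex$ has two edges in the composite and its configuration carries two distinct marked-point factors (so it is not a single point), and $w$ has at least two edges since $|\finiteset'|>1$. The substantive content of the lemma is this combinatorial interplay between the bivalence hypotheses defining $\td_{\DM}$ and the single-point forgetting rule built into composition in $\td$.
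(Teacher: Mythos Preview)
Your overall strategy matches the paper's: the paper's two-sentence proof simply observes that composing two non-identity elements is an ``unstable grafting'' at a bivalent leaf-successor and a bivalent root, so these special vertices disappear, while the remaining root and leaf-successors are untouched and stay bivalent. Your argument attempts to make this explicit, which is fine, but you misdescribe the composition in $\td$ and this leads to a genuine error.

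The composition in $\td$ is not ``graft the trees, then reinterpret $\vertextwo$'s decoration modulo $\affc$, then forget $\vertextwo$.'' Rather, the root decoration of $y$ is glued via $\comp$ into the factor indexed by $\hat{\element}$ inside $\vertex$'s decoration; this \emph{merges} $\vertex$ and $\vertextwo$ into a single vertex. In your situation $\vertex$ had only the single incoming edge (the leaf edge), so after the merge the new vertex has the single incoming edge to $w$, decorated by $(\cent_{\hat{\element}},\radi_{\hat{\element}})\comp(0,0)=(\cent_{\hat{\element}},0)$, a single point, and it is this merged vertex that gets forgotten. Your assertion that ``$\vertex$ has two edges in the composite and its configuration carries two distinct marked-point factors'' is incorrect: the marking set $\marking_\vertex$ is indexed by $\edgesof(\vertex)$, the edges having $\vertex$ as \emph{output}, and a bivalent vertex has exactly one such edge, hence one factor. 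Worse, in your picture $\vertex$ would survive as an internal bivalent vertex (neither root nor a leaf-successor), so your proposed composite would not even be a nearly stable tree and hence not an element of $\td$ at all. The easy fix is to note that both $\vertex$ and $\vertextwo$ are absorbed, so the surviving edge runs from the successor $u$ of $\vertex$ in $x$'s tree directly to $w$; then one checks (as you essentially do) that $u$ is either the root of $x$ or at least trivalent, and that $w$ is at least trivalent, so the $\td_{\DM}$ conditions persist.
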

\begin{proof}
Any compostion of two non-identity elements of $\td_{\DM}$ must involve the unstable grafting of a leaf of one to the root of the other.  The other root and the other leaves are not involved in this composition and remain bivalent after it.
\end{proof}

In Section~\ref{sec:charm}, we will topologize both $\td$ (and $\td_{\DM}$, using the subspace topology) and in general when we refer to them, we will be referring to their topologized versions.

Next we give a description of the so-called Deligne--Mumford spaces of genus $0$ nodal curves with marked points. This description, which is derived from~\cite{HinichVaintrob:ATSO}, is not the standard one, but is convenient for our purposes. We also prefer to call these spaces Deligne--Mumford--Knudsen spaces to emphasize Knudsen's contribution~\cite{Knudsen:PMSSCIISMGN} to the theory.
\begin{defi}
The collection $\msph$ is defined as follows:
\begin{equation*}\msph(\finiteset)=\left\{
\begin{array}{ll}
\emptyset& |\finiteset|<2\\
\emb(\finiteset,\cC)/\affc & |\finiteset|\ge 2
\end{array}
\right..\end{equation*}
Here $\emb(\finiteset,\cC)$ denotes embeddings of $\finiteset$, viewed as a discrete space, into $\cC$.
\end{defi}
\begin{defi}
The collection of {\em Deligne--Mumford--Knudsen sets} $\Mbar$ is defined as follows, using the notation of Appendix~\ref{appendix:trees}:
\begin{equation*}
\Mbar(\finiteset)=\coprod_{\tree\in\treesone{\finiteset}}\prod_{\vertex\in \verticesof(\tree)}\msph(\insof(\vertex))
\end{equation*}
That is, an element of $\Mbar(\finiteset)$ is a stable tree (no bivalent vertices) whose vertices are each decorated with an embedding of $\insof(\vertex)$ into $\cC$ up to the action of the affine group of $\cC$.
\end{defi}
\begin{defi}
The {\em realization} of $\prod \psplone_\vertex$ in $\Mbar(\finiteset)$ is a topological space with extra structure obtained as follows.  For each vertex, take a copy of complex projective line, marked at $\infty$ and by the points of $\psplone_\vertex$ up to $\affc$.  If there is an edge between two vertices, identify $\infty$ on one copy of complex projective line with the correct point of the other.  Remove the points $\finiteset$ corresponding to the leaves from this quotient, and give it the subspace topology.  The topological structure does not depend on any choices, and because everything is taken up to conformal automorphisms, this space has a conformal structure away from the identified points (which are called {\em nodal points}).
\end{defi}
\begin{defi}
A {\em contraction} from one point to another in $\Mbar(\finiteset)$ is a map of realizations which respects $\finiteset$. It is a homeomorphism except possibly at nodal points of the realization of the range, and so that the preimage of a nodal point is either a nodal point or a circle containing no nodal points.
\end{defi}
\begin{remark}
Such a map is called a contraction because topologically it contracts circles to nodal points. Combinatorially it might make more sense to call it a vertex expansion, since contracting a circle corresponds to expanding a vertex into two vertices.
\end{remark}
\begin{defi}
$\Mbar(\finiteset)$ is a topological space with a topology with a local basis as follows.  Let $\psplone= \prod \psplone_\vertex$ be an element of $\Mbar(\finiteset)$.  The configuration $\psplone_\vertex\in \msph(\insof(\vertex))$ is a subset of the complex projective line taken up to the action of $\affc$.  Let $\unei_\vertex$ be a bounded open subset of $\cC$ (taken up to $\affc$) whose closure does not contain any point of $\psplone_\vertex$.  The neighborhood $\neiz{\psplone}{\unei}{\vertex}$ consists of those points $y$ in $\Mbar(\finiteset)$ which have a contraction to $\psplone$ which is conformal on the preimage of $\unei_\vertex$.
\end{defi}
\begin{fact*}
$\Mbar$ has the structure of a topological operad, where the unit is the tree with one leaf and one vertex and partial composition is given by grafting trees.
\end{fact*}
\section{Characterizing a pushout}\label{sec:charm}
The purpose of this section is to show that $\mbartwo$ is the pushout of the diagram of topological operads
$\fld\gets\fldone\to\tld$.

\begin{prop}\label{prop:Xissetpushout}
As an operad of sets, $\td$ is the pushout of the trivialized annuli operad $\tld$ and the framed little disks $\fld$ over $\fldone\subset \tld$:
\begin{equation*}
\xymatrix{
{\fldone}\ar[r]\ar[d]&{\tld}\ar[ddr]\ar[d]\\
{\fld}\ar[drr]\ar[r]&{\td}\ar@{.>}[dr]\\
&&{\operad}
}
\end{equation*}
\end{prop}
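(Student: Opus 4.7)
The plan is to verify directly that $\td$ satisfies the universal property of the pushout in operads of sets. First, I would exhibit the two inclusion morphisms. The map $\iota_\fld \colon \fld \to \td$ sends $a \in \fld(\finiteset)$ to the single-vertex corolla in $\td(\finiteset)$ decorated by $a$; all edges are external, so the marking conditions are automatic. The map $\iota_\tld \colon \tld \to \td$ sends $(\cent,\radi) \in \tld$ with $\radi \ne 0$ to the same one-vertex corolla (agreeing with $\iota_\fld$ on $\fldone$), and sends the marked-point element $(\cent,0)$ to the unique nearly-stable two-vertex arity-one tree whose root is decorated by $(\cent,0)$ and whose bivalent leaf-successor is decorated by the unique class in $\conft(\{*\})/\affc$ satisfying $\radi > 0$. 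Both maps are readily checked to be operad morphisms agreeing on $\fldone$.

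Next, given any operad $\operad$ of sets with operad maps $\phi_\fld \colon \fld \to \operad$ and $\phi_\tld \colon \tld \to \operad$ agreeing on $\fldone$, I would construct the unique $\Phi \colon \td \to \operad$ compatible with the inclusions. For $\xi \in \td(\finiteset)$ with decorated tree $\tree$, the value $\Phi(\xi)$ is assembled by operadic tree composition of local data at each vertex $\vertex$. The local element $\Phi_\vertex \in \operad(\edgesof(\vertex))$ is built as follows: for a non-root $\vertex$, first fix an $\affc$-representative of $\psi_\vertex$ scaled down into the unit disk; then choose $\epsilon > 0$ and replace each internal-edge marked point $(\cent_\element, 0)$ in $\psi_\vertex$ by the small disk $(\cent_\element, \epsilon)$, producing an auxiliary element $a_{\vertex,\epsilon} \in \fld(\edgesof(\vertex))$; finally, define $\Phi_\vertex$ to be $\phi_\fld(a_{\vertex,\epsilon})$ with $\phi_\tld((0,0))$ composed into every internal-edge input. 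Independence of $\epsilon$ follows from
\[
\phi_\fld((0,\epsilon'/\epsilon)) \circ_1 \phi_\tld((0,0)) = \phi_\tld\bigl((0,\epsilon'/\epsilon) \comp (0,0)\bigr) = \phi_\tld((0,0)),
\]
which uses $\fldone$-compatibility and the $\tld$-absorption law. Operadic compatibility is a routine trace against the tree-grafting rule of $\td$. Uniqueness is immediate because every $\td$-element is manifestly a tree composition of images of the inclusions, forcing $\Phi$'s values.

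The main obstacle is well-definedness of $\Phi_\vertex$ at a non-root vertex: the marking lives in the quotient $\conft/\affc$, and the $\affc$-action admits no section into $\fld$, since translations (and scalings by factors of modulus greater than one) are unavailable within $\fldone$. The resolution is that $\Phi_\vertex$ is never used in isolation; it always appears in $\Phi(\xi)$ already composed with the parent's marked-point element $\phi_\tld((\cent_{\hat{\element}}, 0))$. The general absorption identity $(\cent,0) \comp t = (\cent,0)$, valid for any arity-one $t$, then propagates through the operad axioms and ensures that arbitrary arity-one pre-modifications of $\Phi_\vertex$ induced by the $\affc$-freedom of the representative leave $\Phi(\xi)$ unchanged. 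Making this propagation rigorous, particularly for the translation part of $\affc$, is the main technical content of the proof.
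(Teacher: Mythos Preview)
Your approach is sound and differs from the paper's: you verify the universal property directly, while the paper invokes a concrete tree-model of the operadic pushout (Fact~\ref{fact:pushout}) and shows that its relations reduce every element to a unique normal form identifiable with a point of $\td$. Both routes hinge on the same absorption identities, but the paper's framing lets it work entirely inside the pushout presentation, whereas yours requires building the factoring map $\Phi$ by hand.

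There is, however, a genuine gap in your resolution of the $\affc$-ambiguity. You write that the absorption identity $(\cent,0)\comp t = (\cent,0)$ is ``valid for any arity-one $t$'' and that this ``propagates through the operad axioms.'' But the identity only holds for $t\in\tld$, i.e.\ for $t=(\cent_t,\radi_t)$ with $|\cent_t|+|\radi_t|\le 1$, and a general $g\in\affc$ relating two representatives need not satisfy this bound (nor lie in $\fldone$). So you cannot simply insert $g$ and absorb it.

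The fix is a common-refinement argument. Given two representatives $a,a'\in\fld(\edgesof(\vertex))$ with $a'=g\comp a$ for some $g\in\affc$, choose $h=(0,\radi_h)\in\fldone$ with $|\radi_h|$ small enough that $h':=h\comp g^{-1}$ also lies in $\fldone$; this is always possible since $|\radi_h|\le |\radi_g|/(|\cent_g|+1)$ suffices. Then $h\comp a = h'\comp a'$ in $\fld$, and using the $\fldone$-compatibility and absorption in $\tld$,
\[
\phi_\tld((0,0))\cyrc\phi_\fld(a)
=\phi_\tld((0,0)\comp h)\cyrc\phi_\fld(a)
=\phi_\tld((0,0))\cyrc\phi_\fld(h\comp a),
\]
and symmetrically for $a'$ via $h'$, giving equality. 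This is the rigorous content your sketch was pointing toward. The paper's normal-form argument encodes the same mechanism implicitly: it first shrinks all internal-edge radii using $\fldone$-elements absorbed by $(0,0)$ from above, then observes that the $(0,0)$ below absorbs any \emph{remaining} $\fldone$-element on the right, which is enough once one restricts attention to small representatives.
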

The maps in the diagram are as follows.

The configuration space in $\td$ for a $\finiteset$-corolla is in canonical bijection with $\fld(\finiteset)$.  Composition of corollas in $\td$ agrees with composition in the framed little disks.  This discussion includes annuli of positive radius.

The annulus $(\cent,0)$ maps to the $1$-tree with two vertices, the root decorated by $(\cent,0)$ and the other vertex decorated by a single little disk $(\cent,\radi)$ up to $\affc$ (which is no information, as this is the quotient of $\affc$ by itself).  A straightforward check demonstrates that composition of two radius zero annuli, or of a radius zero annulus with a positive radius annulus  agrees with composition in $\td$ on both the right and the left.
\begin{fact}\label{fact:pushout}
The coproduct of two operads $\firstop$ and $\secondop$ in the category of sets (or topological spaces) can be realized as decorated trees whose vertices are decorated with elements of the individual operads $\firstop$ and $\secondop$ of the appropriate arity, and so that the set of decorations on two adjacent vertices always has one decoration from each of $\{\firstop,\secondop\}$. Composition is grafting of trees along with composition of $\firstop$ or $\secondop$ as necessary.

The pushout of two operads $\firstop$ and $\secondop$ along a third operad $\thirdop$ is the further quotient of the coproduct by the relation that one can change the image of an element of $\thirdop$ in $\firstop$ to the image of the same element in $\secondop$, contracting the tree as necessary using the composition in $\secondop$ (as well as the same relation with $\firstop$ and $\secondop$ interchanged).
\end{fact}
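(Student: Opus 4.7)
The plan is to verify both claims by producing the universal property directly from the decorated-tree model. First, build the candidate operad $\firstop\ast\secondop$: its arity-$\finiteset$ set consists of (isomorphism classes of) $\finiteset$-trees $\tree$ equipped with a decoration of each vertex $\vertex$ by an element of $\firstop(\edgesof(\vertex))$ or of $\secondop(\edgesof(\vertex))$, subject to the alternating condition that no two vertices joined by an internal edge are decorated from the same operad. Define partial composition by grafting the root of one tree onto a leaf of another and then, whenever the grafting creates an internal edge joining two $\firstop$-decorated (resp. $\secondop$-decorated) vertices, contracting that edge using the $\firstop$-composition (resp. $\secondop$-composition) at the two endpoints; iterate until alternation is restored. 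Termination is immediate since each contraction decreases vertex count. Symmetric action and unit are evident.

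Second, verify that $\firstop\ast\secondop$ satisfies the universal property of the coproduct. There are canonical maps $\firstop\to \firstop\ast\secondop$ and $\secondop\to\firstop\ast\secondop$ sending an element to the decorated corolla; these are operad maps because composing two corollas decorated from the same operad produces a two-vertex tree that immediately contracts to the single corolla labelled by their composition. Given operad maps $\functora\colon\firstop\to\operad$ and $\functorb\colon\secondop\to\operad$, define $\functora\ast\functorb$ on a decorated tree by interpreting each decoration through $\functora$ or $\functorb$ and performing the iterated operadic composition in $\operad$. Associativity and equivariance of composition in $\operad$ ensure that the value does not depend on the order in which vertices are composed, hence is insensitive to how the grafting/contraction procedure is run; uniqueness is forced because the decorated corollas generate $\firstop\ast\secondop$ under composition.

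Third, for the pushout, impose on $\firstop\ast\secondop$ the congruence generated by: for each $\vartwo\in\thirdop(\edgesof(\vertex))$, the two corollas decorated by the images of $\vartwo$ in $\firstop$ and in $\secondop$ are identified. A swap of a single vertex's decoration via this relation may create a same-operad adjacency, which is then resolved by contraction as above; since the generating relation is stable under symmetric group action and substitution, the congruence defines an operad quotient $(\firstop\ast\secondop)/\thirdop$. The universal property now reads: a pair of maps $\functora,\functorb$ to $\operad$ factors through the quotient if and only if $\functora$ and $\functorb$ agree upon precomposition with the two structure maps from $\thirdop$, which is exactly the pushout condition.

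The main obstacle is the confluence check—that running the contraction procedure in different orders yields the same canonical representative, and that reconstructing $\functora\ast\functorb$ is independent of these choices. Both reduce to repeated application of operadic associativity and equivariance in $\firstop$, $\secondop$, and $\operad$, using the tree conventions of Appendix~\ref{appendix:trees}; the bookkeeping is routine but accounts for almost all of the work.
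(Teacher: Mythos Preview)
Your proof sketch is correct and follows the standard route to establishing this description of coproducts and pushouts of operads. However, you should be aware that the paper does not actually prove this statement: it is recorded as a Fact, with the remark that ``a version of this fact is presented in~\cite{GetzlerJones:OHAIIDLS}, although this version is slightly different.'' So there is no proof in the paper to compare against; you have supplied an argument where the paper simply cites the literature.

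That said, your sketch is sound. The decorated-tree model with the alternating condition is the standard presentation of the coproduct, the universal map $\functora\ast\functorb$ is well-defined by operadic associativity in the target, and the pushout is correctly obtained as the quotient by the congruence generated by identifying the two corollas coming from each element of $\thirdop$. Your identification of confluence as the only nontrivial bookkeeping is accurate, and it does reduce to associativity and equivariance as you say. One small point: your formulation of the pushout relation (identify corollas) and the paper's (swap a vertex decoration and then contract) are equivalent, since the congruence generated by the corolla identifications forces exactly those swap-and-contract moves on larger trees.
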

A version of this fact is presented in~\cite{GetzlerJones:OHAIIDLS}, although this version is slightly different.
\begin{proof}[of Proposition~\ref{prop:Xissetpushout}]
We begin by showing that the sets making up $\td$ agree with those of the pushout of the diagram as presented in the fact above. Since the trivialized annulus $(\cent,0)$ is equal to the composition $(\cent,\radi)\comp (0,0)$ for any choice $\radi$, the relations of the pushout imply that every element has a representative where all of the elements of $\tld$ are of the form $(0,0)$. If there is a vertex decorated by a non-trivialized annulus after performing this procedure, then we may compose in $\tld$ or $\fld$ and eliminate it. Then all vertices will be either
\begin{itemize}
\item at least trivalent or 
\item bivalent and decorated with the configuration $(0,0)$ with the exception that 
\item if the root is bivalent, it may be decorated with the configuration $(\cent,0)$.
\end{itemize} 

In fact, the internal vertices will alternate between these two types. Now any edge of an at least trivalent vertex that does not come from a leaf comes from a marking of the form $(0,0)$. This means that the marking corresponding to any such edge $(\cent,\radi)$ is equivalent to the marking $(\cent,\radi')$ for any $\radi'$ such that $0<|\radi'|\le|\radi|$ because $(0,0)$ annihilates the annulus $(0,\frac{\radi'}{\radi})$ on the left. So we lose no information by forgetting all such radii. Further, the total collection of markings on any vertex whose sucessor is bivalent and decorated with the configuration $(0,0)$ is equivalent to the same collection where every marking is acted on on the left by $(\cent,\radi)$, $\radi\ne 0$, since $(0,0)$ annihilates $(\cent,\radi)$ on the right. 

Since the vertices alternate, we lose no information by forgetting every internal bivalent vertex. At this point we essentially have the set making up $\td$. The vertices that are not root vertices and not bivalent are decorated by collections of points (corresponding to internal edges) and disks (corresponding to external edges) in the disk up to an overall left action of $\affc$, which is the same as this set in the plane. The root vertex is the same, except there is no left action. The bivalent vertices that are successors to leaves contain no further information.

It is easy to check that no further quotient is possible given the relations in the pushout and that composition and the maps involved agree.
\end{proof}
\begin{prop}\label{prop:topologyexistsandpushout}
There are topologies on the sets $\td(\finiteset)$, described below, which make $\td$ a topological operad, make the embeddings of the trivialized annuli and the framed little disks continuous, and realize $\td$ as the pushout in the category of topological operads.
\end{prop}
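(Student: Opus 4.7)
I would topologize each $\td(\finiteset)$ as the coproduct, over nontrivial nearly stable $\finiteset$-trees $\tree$, of $\prod_{\vertex\in\verticesof(\tree)}\marking_\vertex$, where each marking space carries the natural topology as a subquotient of the ambient configuration space $\conft(\edgesof(\vertex))$. The task is then to verify three things: that $\td$ with this topology is a topological operad, that the natural inclusions from $\fld$ and $\tld$ are continuous, and that the resulting topological operad satisfies the universal property of the pushout.

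The first two points reduce to unwinding how composition acts on the explicit canonical form from Proposition~\ref{prop:Xissetpushout}. Composition of two tree-based elements amounts to grafting trees and then, at the new root-to-subtree interface, using $\comp$ to rescale the inserted configuration and finally to ``forget'' any internal bivalent vertex whose marking has collapsed to a single point. The grafting is a homeomorphism between summands, the rescaling step is continuous because $\comp$ is, and the symmetric group action is just a relabeling of factors. The inclusions from $\fld$ and $\tld$ send an element to a corolla or a two-vertex tree with evidently continuous dependence on the input. To verify the universal property, suppose $\operad$ is a topological operad equipped with compatible continuous maps from $\fld$ and $\tld$. On a single summand $\prod_\vertex \marking_\vertex$, the set-theoretic pushout map factors as an iterated operad composition in $\operad$ in which the datum at each vertex is sent through the map from $\fld$ and the datum at each internal edge is supplied by the map from $\tld$ applied to the point $(0,0)$. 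Since operadic composition in $\operad$ is continuous and each marking varies continuously, this assembly map is continuous on each summand, hence on the disjoint union.

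The main obstacle is the vertex-forgetting step in the definition of composition. On the locus where one of the marking-plane configurations degenerates to a single point, the composite jumps from one summand of the coproduct to another indexed by a smaller tree, and one must verify continuity across this stratum change. The key point is to identify, for each pair of summands related by contracting one internal edge and forgetting the resulting bivalent vertex, the degenerate sublocus of the larger summand as a closed subset that maps homeomorphically onto a corresponding sublocus of the smaller summand, with the operadic composition factoring continuously through this identification. In effect, when the outgoing rescaled configuration at a vertex shrinks to a point in $\cC$, the marking space of that bivalent vertex degenerates continuously to the single-point configuration that the forgetting step erases; this matches the required identification and makes composition continuous across the stratum boundary.
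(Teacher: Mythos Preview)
Your proposal has a genuine gap: the coproduct topology over tree types does \emph{not} make the inclusion $\tld\to\td$ continuous. The space $\tld(\{\element\})=\{(\cent,\radi):|\cent|+|\radi|\le 1\}$ is connected, but its image in $\td$ lands in two distinct summands: the annulus $(\cent,\radi)$ with $\radi\ne 0$ lives over the $1$-corolla, while $(\cent,0)$ lives over the two-vertex tree. In the coproduct topology those summands are clopen, so the preimage of the two-vertex stratum is the closed, non-open set $\{(\cent,0)\}$. Thus your second bullet ``the inclusions \ldots with evidently continuous dependence'' fails for $\tld$, and with it the pushout property. Your later discussion of ``continuity across this stratum change'' is at odds with having declared the topology to be a disjoint union: in a coproduct there is no stratum boundary to cross.

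(Incidentally, the vertex-forgetting step you flag as the main obstacle is not one: whether a composition is stable depends only on the input tree types, so within fixed summands the output summand is fixed and $\comp$ gives continuity. The real obstacle is the one above.)

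The paper avoids this by building the topology so that strata are glued. It enlarges each $\marking_\vertex$ to an \emph{augmented} marking space $\augmarking_\vertex$, identical except that internal-edge radii are allowed to be nonzero. For a point $\psplone$ over a tree $\tree$, a basic neighborhood is the image of $\prod_\vertex\unei_\vertex\subset\prod_\vertex\augmarking_\vertex$ under a map to $\td$ that \emph{contracts} any internal edge whose radius has become nonzero (composing the adjacent markings with $\comp$). Thus a basic neighborhood of $\psplone$ meets all strata obtained from $\tree$ by edge contractions, and in particular the map $\tld\to\td$ becomes continuous at $(\cent,0)$: perturbing to $(\cent,\radi)$ with small $\radi\ne 0$ contracts the internal edge and lands in the corolla stratum nearby. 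The paper then checks by hand that these images form a basis, that composition and the symmetric action are continuous for this topology, that both embeddings are continuous, and finally that the universal map to any target operad $\operad$ is continuous by expressing it (as you do) as a tree-wise composition in $\operad$ and observing that the augmented-marking parametrization is open onto its image.
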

There is an approach to proving this proposition where one would use the topological version of Fact~\ref{fact:pushout}, but as we shall need to know explicit details about this topology for Propositions~\ref{prop:isamanifold}~and~\ref{prop:XDMisDM}, we instead construct the topology by hand.  

We begin by describing the topology of Proposition~\ref{prop:topologyexistsandpushout}.
We contain the marking set $\marking_\vertex$ into a larger ambient space.

\begin{defi}
Let $\vertex$ be a vertex of a non-planar unreduced rooted $\finiteset$-tree $\tree$.  If $\vertex$ is the root, then the {\em augmented marking space} $\augmarking_\vertex$ is the subset of $\conft(\edgesof(\vertex))$ such that:
\begin{enumerate}
\item $0<|\radi_\element|$ if $\radi_\element$ is in a factor $(\cent_\element,\radi_\element)$ indexed by an external edge of $\tree$,
\item $|\cent_\element|+|\radi_\element|\le 1$ for every factor in the product, and
\item this inequality is strict unless $\cent_\element=0$.
\end{enumerate}
If $\vertex$ is not the root, then $\augmarking_\vertex$ is the subset of $\conft(\edgesof(\vertex))/\affc$ such that 
$0<|\radi_\element|$ if $\radi_\element$ is in a factor indexed by an external edge of $\tree$.

The augmented marking space contains the marking set as the subspace with $\radi_\element=0$ for each factor indexed by an internal edge of $\tree$.
\end{defi}
Now fix a nontrivial nearly stable $\finiteset$-tree $\tree$ and fix an ordering of $\finiteset$.
We will define a set map
\begin{equation*}
\prod_{\vertex\in \verticesof(\tree)} \augmarking_\vertex\to \td
\end{equation*}
The part of $\td(\finiteset)$ with underlying tree $\tree$ looks like $\prod \marking_\vertex$, and such points will be taken to themselves by this map.

For $\psplone$ in $\td(\finiteset)$ with underlying tree $\tree$, let $\unei_\vertex$ be a neighborhood of $\psplone_\vertex$; then the image of $\prod \unei_\vertex$ will be an neighborhood of $\psplone$ set in our basis, called $\neiz{\psplone}{\unei}{\vertex}$.

The map works as follows.  Let $\pspltwo=\prod \pspltwo_\vertex$ be a point in $\displaystyle\prod_\tree\augmarking_\vertex$.  Let $\someedges$ be the set of internal edges $\edge$ of $\tree$ so that  $|\radi_\edge|>0$.  Then $\pspltwo$ will land in the component of $\td(\finiteset)$ whose underlying tree is the edge contraction of $\tree$ along the edges in $\someedges$.

To specify the map, we must provide maps from a product of $\augmarking_\vertex$ to a single $\marking_\vertex$ after the contraction.  The ordering of $\finiteset$ induces one on $\insof(\vertex)$; for ease, suppose $\insof(\vertex)=\{1,2,\ldots\}$.
If $\vertex$ is not the root\footnote{Let $\tilde{\pspltwo}_\vertex$ denote $\pspltwo_\vertex$ for $\vertex$ the root.}, then there is a unique representative $\tilde{\pspltwo}_\vertex$ in the $\affc$ orbit of $\pspltwo_\vertex$ with $\cent_1=0$, $\cent_2$ on the positive real line, and the Euclidean diameter of the union of the disks of center $\cent_\element$ and radius $|\radi_\element|$ in the plane equal to $\frac{1}{2}$. There is a unique marking with center $0$ and radius vector to $1$ if $\vertex$ has only one incoming edge (which must then be external and have a nonzero radius).  We can view $\tilde{\pspltwo}_\vertex$ as a configuration in the disk, rather than the plane.  Then the map from a product of $\augmarking_\vertex$ to $\marking_\vertex$ is just iterated $\comp$ of the $\tilde{\pspltwo}_\vertex$ along the factors of the product specified by the contraction edges.  The associativity of $\comp$ ensures that this is independent of the order of contraction.  

The point $\psplone$ has radius vector zero for all internal edges, so is taken to itself, as promised.

\begin{lemma}
\label{lemma:basis}
The set $\{\neiz{\psplone}{\unei}{\vertex}\}$ forms the basis for a topology which is independent of the ordering chosen on $\finiteset$.
\end{lemma}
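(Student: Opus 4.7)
The plan is to check the basis axioms and then to deduce ordering independence. Let $\Phi\colon\prod_{\vertex\in\verticesof(\tree)}\augmarking_\vertex\to\td(\finiteset)$ denote the gluing map for a fixed ordering on $\finiteset$.

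\textbf{Covering.} For any $\psplone\in\td(\finiteset)$ with underlying tree $\tree$, take each $\unei_\vertex$ to be an open neighborhood of $\psplone_\vertex$ in $\augmarking_\vertex$. The set $\someedges$ of positive-radius internal edges at $\prod\psplone_\vertex$ is empty, so $\Phi(\prod\psplone_\vertex)=\psplone$ and hence $\psplone\in\neiz{\psplone}{\unei}{\vertex}$.

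\textbf{Intersection.} Suppose $\psplone\in\neiz{\pspltwo_1}{\unei^1}{\vertex}\cap\neiz{\pspltwo_2}{\unei^2}{\vertex}$, with $\pspltwo_i$ on a tree $\tree_i$ and $\psplone$ on $\tree$. For each $i$ there is a preimage $\pspltwo_i^{*}\in\prod_{\vertex\in\verticesof(\tree_i)}\unei^i_\vertex$ of $\psplone$ under $\Phi$, and $\tree$ is obtained from $\tree_i$ by contracting the positive-radius internal edges of $\pspltwo_i^{*}$. I would exhibit a neighborhood $\unei'$ of $\prod\psplone_\vertex$ in $\prod_{\vertex\in\verticesof(\tree)}\augmarking_\vertex$ whose $\Phi$-image lies in both basis sets. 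The idea is to lift any $\pspltwo'\in\prod\unei'_\vertex$ to a point $\widetilde{\pspltwo}'_i\in\prod_{\vertex\in\verticesof(\tree_i)}\augmarking_\vertex$ with the same $\Phi$-image, by expanding each vertex of $\tree$ into the corresponding subtree of $\tree_i$ and distributing the decoration via the subconfigurations of $\pspltwo_i^{*}$, renormalized using $\comp$. The lift equals $\pspltwo_i^{*}$ at $\pspltwo'=\prod\psplone_\vertex$ and depends continuously on $\pspltwo'$ by continuity of $\comp$ and of the normalization, so shrinking $\unei'$ forces the lift into $\prod\unei^i_\vertex$ for both $i$, and $\Phi(\pspltwo')\in\neiz{\pspltwo_i}{\unei^i}{\vertex}$.

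\textbf{Ordering independence.} Switching the ordering changes the normalization at a non-root vertex $\vertex$ from $\tilde{\pspltwo}_\vertex$ to $g_\vertex(\pspltwo_\vertex)\comp\tilde{\pspltwo}_\vertex$ for a continuous function $g_\vertex\colon\augmarking_\vertex\to\affc$, producing a second gluing map $\Phi'$. Using the basis property, it suffices to show that every $\Phi$-basis set contains a $\Phi'$-basis set based at the same point (and symmetrically). I would construct, locally near $\prod\psplone_\vertex$, a self-homeomorphism $T$ of $\prod_{\vertex\in\verticesof(\tree)}\augmarking_\vertex$ that adjusts the parent-side factor at each internal edge $\edge$ by $\comp g_\vertex(\pspltwo_\vertex)$ on the right. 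Associativity of $\comp$ gives $\Phi\circ T=\Phi'$ locally, and since the shift $(\cent_\edge,\radi_\edge)\comp g_\vertex$ reduces to $(\cent_\edge,0)$ when $\radi_\edge=0$, the map $T$ fixes the zero-radius locus, so shrinking $\unei$ produces the desired inclusion.

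\textbf{Main obstacle.} The delicate step is the intersection property: the lift to $\tree_1$ and $\tree_2$ must be simultaneous, continuous, and respect the open--closed conditions defining $\augmarking$ (bounded radii, strict disjointness off the $\cent=0$ locus). Ordering independence is comparatively soft once one has the continuity of the normalization procedure in hand.
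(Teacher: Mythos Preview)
Your proposal is correct in outline and captures the essential ideas, but it organizes the intersection argument differently from the paper. The paper first reduces to a single basis set by the standard observation that $\neiz{\psplone}{\unei}{\vertex}\cap\neiz{\psplone}{\unei'}{\vertex}\supset\neiz{\psplone}{\unei\cap\unei'}{\vertex}$, so one only needs: for $\psplone\in\neiz{\psplnaught}{\unei}{\vertex}$, find a basis set at $\psplone$ inside. Then, rather than lifting points from the contracted tree $\tree_\someedges$ back to $\tree$ via a local section as you do, the paper \emph{pushes forward} the open sets: it restricts each $\unei_\vertex$ to the locus where the $\someedges$-radii are nonzero and then composes along the contraction edges with $\comp$, obtaining open sets on $\tree_\someedges$ directly. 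This only requires that iterated $\comp$ (on the positive-radius locus) is an open map, which is immediate since it is a submersion; your approach instead requires constructing a continuous local inverse to this composition, which is more work and is where the constraints defining $\augmarking$ must be checked carefully, as you note.

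For ordering independence, the paper's argument is more concrete and avoids building the intertwining homeomorphism $T$: it simply observes that the two normalizations differ by a rotation together with a translation of modulus less than $\tfrac{1}{2}$ (since the normalized diameter is $\tfrac{1}{2}$), and then an $\tfrac{\epsilon}{2}$-ball in one normalization composes into an $\epsilon$-ball in the other. Your structural argument via $T$ is valid and arguably cleaner conceptually, but the paper's hands-on estimate gets there faster. Both routes ultimately rest on the same fact you identify: the discrepancy vanishes on the $\radi_\edge=0$ locus.
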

\begin{proof}
It is clear that these sets cover $\td(\finiteset)$. The intersection of two sets
$\neiz{\psplone}{\unei}{\vertex}\cap \neiz{\psplone}{\unei'}{\vertex}$ contains $\neiz{\psplone}{\unei\cap\unei'}{\vertex}$ so it suffices to show that for any $\psplone\in \neiz{\psplnaught}{\unei}{\vertex}$ there exists a basis element $\neiz{\psplone}{\unei}{\vertex}$ centered at $\psplone$ contained therein.  Note that 
$\psplnaught$ lives in a product over the tree $\tree$ and $\psplone$ lives in the product over the tree $\tree_{\someedges}$, where $\someedges$ is some set of internal edges of $\tree$. 
For each $\unei_\vertex\subset \augmarking_\vertex$, for $\vertex\in\verticesof(\tree)$, consider the open subset ${\unei_\vertex}'$ specified to be those points of $\unei_\vertex$ so that factors $(\cent,\radi)$ indexed by edges in $\someedges$ have nonzero radius.

Now we must define open sets $\unei_\vertex\subset\augmarking_\vertex$ for each vertex $\vertex$ of the tree $\tree_\someedges$.  Such a vertex is an equivalence classes of vertices in $\tree$. Then the open set $\unei_{\{\vertex_{\edge_\element}\}}$ (where the subscript comprises the elements of such an equivalence class) corresponds to the composition of $\unei'_\vertex$ by $\comp$ along the factors indexed by contraction edges.  As before, we pick a unique normalized representative for the factors in $\augmarking_\vertex$ involved in a composition. 

It is an exercise to show that the resultant sets $\unei_\vertex$ are open, as desired. 

Changing the ordering on $\finiteset$ changes the normalization used in the composition map.  It is sufficient to show that if $\finiteset$ and $\hat{\finiteset}$ are the same set with different ordering, then every neighborhood $\neiz{\psplone}{\unei}{\vertex}$ contains a neighborhood $\widehat{\neiz{\psplone}{\unei'}{\vertex}}$.  The two normalizations differ by a rotation and a translation of less than $\frac{1}{2}$.  The open set $\unei$ contains a neighborhood of a pair $(\cent,0)$ into which the decoration on this vertex is to be composed; this neighborhood must contain the ball consisting of all pairs $(\cent',\radi')$ with $|\cent'-\cent|$ and $|\radi'|$ less than $\smallepsilon$, for some small $\smallepsilon$.  If the corresponding neighborhood in $\unei'$ is chosen inside the ball of radius, say, $\frac{\smallepsilon}{2}$, then the composition using the normalization from $\hat{\finiteset}$ will be contained in that from $\finiteset$. 
\end{proof}
\begin{proof}[that the operadic structure maps are continuous]
Now that the topology is defined, it should be shown that the various structure maps of the operad are continuous.  This is obvious in the case of the unit, and the symmetric group action is continuous because the topology is independent of the ordering chosen to define the topologizing map.

Next, if $\psplone=\pspltwo\cyrc_{\element} \psplthree$, then the preimage under $\cyrc_{\element}$ of $\neiz{\psplone}{\unei}{\vertex}$ should contain a neighborhood of $\pspltwo\times \psplthree$.  In the case of stable composition of $\pspltwo$ and $\psplthree$, this follows directly from the continuity of $\comp$, so we restrict our attention to the unstable case.  

In this case, there are special bivalent vertices $\conleaf$ of $\tree_\pspltwo$ and $\conroot$ of $\tree_\psplthree$ where the composition occurs.  The complement of these two vertices in the disjoint union of these trees' vertex sets is in canonical bijection with the vertex set of $\tree_\psplone$.

Call the vertex immediately below $\conleaf$ in $\tree_\pspltwo$ (and the corresponding vertex in $\tree_\psplone$) $\conlast$.  Without loss of generality, we assume that $\unei_\conlast$ contains an $\smallepsilon$-neighborhood of the decoration $\psplone_\conlast$ in some fixed normalization that doesn't depend on the decoration of the edge coming from $\conleaf$.

For the vertices corresponding to $\tree_\psplone$, $\neiz{\psplone}{\unei}{\vertex}$ already includes a choice of open set $\unei_\vertex$.  At the bivalent vertices, take $\unei_\conleaf=\augmarking_\conleaf$ (which is a single point, the image of $(0,1)$ under $\affc$) and $\unei_\conroot=\augmarking_\conroot$.

Define a map \[\displaystyle \prod_{\verticesof_\pspltwo} \augmarking_\vertex\times \prod_{\verticesof_\psplthree}\augmarking_\vertex\to \prod_{\verticesof_\psplone}\augmarking_\vertex\] which composes the decoration on $\conroot$ into the appropriate spot of the decoration on $\conlast$, keeping all other decorations the same.  We will show that performing this operation on points in our product of open sets lands in the product definining $\neiz{\psplone}{\unei}{\vertex}$ and that the following diagram commutes, completing the proof by demonstrating that $\neiz{\pspltwo}{\unei}{\vertex}\times\neiz{\psplthree}{\unei}{\vertex}$ is in the preimage of $\neiz{\psplone}{\unei}{\vertex}$.  
\[
\xymatrix{
\prod_{\verticesof_\pspltwo} \augmarking_\vertex\times \prod_{\verticesof_\psplthree}\augmarking_\vertex\ar[r]\ar[d]& \td\times \td\ar[d]^\comp\\
\prod_{\verticesof_\psplone}\augmarking_\vertex\ar[r]&\td}
\]
Since by assumption the composition is unstable, the radius of the decoration $(\cent_0,\radi_0)$ of $\pspltwo_\conlast$ corresponding to the edge from $\conleaf$ is $0$, and so for any point in the neighborhood $\unei_\conlast$, if $(\cent,\radi)$ is the cogent decoration we have $|\cent-\cent_0|+|\radi|<\smallepsilon$.  Then for any point $(\cent_\conroot,\radi_\conroot)$ in $\unei_\conroot\subset \affc$, we have
\begin{equation*}|\cent+\radi\cent_\conroot-\cent_0|+|\radi\radi_\conroot|\le |\cent-\cent_0|+|\radi|(|\cent_\conroot|+|\radi_\conroot|)\le|\cent-\cent_0|+|\radi|<\smallepsilon.\end{equation*}
This shows that the image of our constructed product neighborhood of $\pspltwo\times \psplthree$ is contained in $\prod \unei_\vertex$.  Commutativity of the diagram basically follows from associativity of $\comp$.  The marking on $\conleaf$ is always the identity, and never matters; there are a couple of easy extra cases where the radius of the salient markings on $\conlast$ or $\conroot$ is zero.
\end{proof}
\begin{proof}[that the embeddings are continuous]
For the framed little disks, there is nothing to check; the tree describing a point in the image of the framed little disks has no internal edges, so the topology on the image of $\fld(\finiteset)$ is just the subspace topology in $(\cC\times \cC)^{\finiteset}$, just as it is in the framed little disks.  For the annuli, it must be checked that the preimage of an open set around the image of a radius zero annulus is open in the operad of trivialized annuli.  Since such an annulus is described by a tree with one leaf and two vertices, the root and another vertex.  The decoration on the leaf edge is unique so the space $\prod \augmarking_\vertex$ is homeomorphic to the augmented marking space of the root.
A basis for the open sets around the trivial annulus $(\cent,0)$ in this space are formed by the subsets 
\begin{equation*}\{(\cent+\varone,\vartwo)\in\conft(\{\edge\}):|\cent+\varone|+|\vartwo|< 1, |\varone|,|\vartwo|<\smallepsilon\}.\end{equation*}  
This is open in $\tld(\{\edge\})$ as well.
\end{proof}
\begin{proof}[that $\td$ is the pushout]
Let $\operad$ be a topological operad that accepts topological operad maps from $\tld$ and $\fld$ that agree on $\fldone$.  Proposition~\ref{prop:Xissetpushout} indicates that there is a unique morphism of operads of sets from $\td\to\operad$ which factors both of these maps.  To prove the proposition, we must show that the induced set map is continuous.

Let $\pspltwo\in \operad$ and $\psplone\in \td$ in its preimage under the induced set map.  Then $\psplone$ has an underlying tree $\tree$ and can be written as a composition along the tree $\tree'$ obtained by inserting a vertex on every internal edge of $\tree$, where the new vertices are labeled by $(0,0)$ and the old vertices are labeled with minor modifications of the decorations of the corresponding vertices of $\tree$, as in the proof of Proposition~\ref{prop:Xissetpushout}.  Let $\verticesof$ denote the vertices of $\tree$ and $\newvertices$ the vertices of $\tree'$.  The diagram in figure~\ref{figure:bigcomm} commutes.

\begin{figure}[t!]
\begin{equation*}
\def\objectstyle{\displaystyle}
\xymatrix{
{
\prod_{\vertex\in \verticesof} \fld(\insof(\vertex))\times
\prod_{\vertex\in \newvertices\backslash\verticesof}\tld(\insof(\vertex))
}
\ar[rr]\ar[dd]&&{
\prod_{\vertex\in \newvertices} \operad
}\ar[dddddd]|{\text{compose along }\tree'}
\\\\
{\prod_{\vertex\in \newvertices} \td}
\ar[dd]|{\text{compose along edges out of vertices in}\newvertices\backslash\verticesof}\\\\
{\prod_{\vertex\in \verticesof}\td}\ar[dd]|{\text{compose along }\tree}\\\\
{\td}\ar[rr]
&&{\operad}
}
\end{equation*}
\caption{}
\label{figure:bigcomm}
\end{figure}
Let $\opnei$ be an open set containing $\pspltwo$.  The composition along the top and right side is continuous by assumption, so we can come up with open neighborhoods of any preimage of $\psplone$ in the product of $\fld$ and $\tld$.  Then we can push those maps down the left side of the diagram.  

Consider $\smallepsilon$-neighborhoods around fixed points $\fldel_\vertex\in\fld(\insof(\vertex))$ and around $(0,0)$.  The image of the product of these neighborhoods under the first two vertical maps on the left contains the $\smallepsilon$-neighborhood of the image of the appropriate composition of $\fldel_\vertex$ and $(0,0)$.  Then the composition of these two maps is open, so pushing forward the preimage of $\opnei$ we get an open neighborhood of $\psplone$ in $\prod \td$.
\end{proof}
\section{The relation to the Deligne--Mumford--Knudsen compactification}\label{sec:isdm}
The purpose of this section is prove Propositions~\ref{prop:dmisdm}~and~\ref{prop:deformationretract}, which state, respectively, that $\td_{\DM}$ and $\Mbar$ are isomorphic and that the inclusion of $\td_{\DM}$ into $\td$ is the inclusion of a deformation retract of collections (not operads).

The former follows from the following propositions:
\begin{prop}\label{prop:isamanifold}
For $|\finiteset|>1$, $\td_{\DM}(\finiteset)$ is locally homeomorphic to $\rR^{2|\finiteset|-4}$.
\end{prop}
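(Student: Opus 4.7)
The plan is to construct explicit local Euclidean charts at every point of $\td_{\DM}(\finiteset)$, using the neighborhood basis $\neiz{\psplone}{\unei}{\vertex}$ of Section~\ref{sec:charm}. Fix $\psplone \in \td_{\DM}(\finiteset)$ with underlying tree $\tree$. By the definition of $\td_{\DM}$ (together with near-stability), $\tree$ consists of: a bivalent root decorated by the frozen configuration $(0,0)$, the $|\finiteset|$ bivalent leaf-successor vertices each with $0$-dimensional marking, and a nonempty (since $|\finiteset|>1$) set $M$ of ``main" vertices, each at least trivalent with $k_v := |\insof(v)| \ge 2$. Decompose $k_v = c_v + \ell_v$, where $c_v$ counts edges to main-vertex children of $v$ and $\ell_v$ counts edges to leaf-successor children of $v$. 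The main vertices form a subtree, so $\sum_v c_v = |M|-1$, and the leaf-successor incidences give $\sum_v \ell_v = |\finiteset|$.

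Intersecting $\neiz{\psplone}{\unei}{\vertex}$ with $\td_{\DM}$ cuts out exactly those augmented markings $\prod_v \pspltwo_v \in \prod_v \augmarking_v$ satisfying the following constraints: the root's single factor stays at $(0,0)$ (else opening it would merge the root's child into the root, violating bivalence of the root); each leaf-successor retains its trivial marking; and in each main vertex $v$, the radius of any factor indexed by a leaf-successor edge remains zero (else that leaf-successor collapses into $v$, breaking its bivalence). Only the factors indexed by main-to-main edges are allowed small nonzero radii, and opening such an edge merely merges the two adjacent main vertices, remaining inside $\td_{\DM}$.

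A dimension count yields $4c_v + 2\ell_v - 4$ real dimensions at each $v \in M$: $4c_v$ from the $c_v$ main-to-main factors $(\cent,\radi)\in\cC^2$, plus $2\ell_v$ from the $\ell_v$ leaf-successor centers $\cent\in\cC$ with radii forced to zero, minus $4$ from the free $\affc$-quotient, which acts freely because the disjointness condition in $\conft$ forces pairwise distinct centers and $k_v\ge 2$ supplies two of them. Summing,
\[
\sum_{v \in M}(4c_v + 2\ell_v - 4) = 4(|M|-1) + 2|\finiteset| - 4|M| = 2|\finiteset|-4.
\]

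The parameterizing map from this product of open slices to $\td_{\DM}$ is continuous by its construction in Section~\ref{sec:charm}, is injective because that construction fixes a canonical $\affc$-normalized representative $\tilde{\pspltwo}_v$ at each non-root vertex, and has continuous inverse because the anchor centers used for the normalization stay distinct throughout the slice, so the normalization depends continuously on the point. Hence a neighborhood of $\psplone$ in $\td_{\DM}$ is homeomorphic to an open subset of $\rR^{2|\finiteset|-4}$. I expect the main technical hurdle to be verifying that the normalization varies continuously as one simultaneously opens main-to-main edges (i.e.\ allows their radii to become positive), thereby transitioning between strata of $\td_{\DM}$; this reduces to the observation that the two anchor centers selected at each main vertex remain well-separated under all such transitions.
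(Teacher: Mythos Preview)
Your approach is essentially the paper's: you rediscover the Deligne--Mumford--Knudsen marking space $\dmmarking_\vertex$ (your ``slice'' with leaf-successor radii set to zero and main-to-main radii free), and your real dimension count $4c_v+2\ell_v-4$ is the paper's complex count $2\intinsof(\vertex)+\extinsof(\vertex)-2$ doubled.

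The one place you are thinner than the paper is injectivity. Saying the construction ``fixes a canonical $\affc$-normalized representative $\tilde{\pspltwo}_v$'' explains how the forward map is defined, not why it can be inverted. The issue is that when you open a main-to-main edge $\edge$, the normalized configuration at the child $w$ gets $\comp$-inserted into the slot $(\cent_\edge,\radi_\edge)$ of the parent's normalized configuration, and to recover $(\cent_\edge,\radi_\edge)$ from the merged picture you must be able to pick out which points came from $w$ and solve for the insertion parameters. The paper isolates exactly this as Lemma~\ref{lemma:bijectionofbase}, proving it by a top-down induction: at each step one knows the child's composed configuration has at least two distinct marked centers (the normalization anchors), which pins down $(\cent_\edge,\radi_\edge)$ uniquely; then the diameter-$\tfrac12$ normalization fixes the overall scale. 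Your closing remark about ``two anchor centers remaining well-separated'' is the right idea, but it is needed for injectivity as well as for continuity of the inverse, and the paper's inductive packaging (Lemmas~\ref{lemma:bijectionofbase} and~\ref{lemma:baseisconty}) is what makes this precise.
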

\begin{prop}\label{prop:XDMisDM}
There is a bijective map of topological operads: \begin{equation*}\equivtodm:\td_{\DM}\to \Mbar.\end{equation*}
\end{prop}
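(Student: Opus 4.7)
My strategy is to define $\equivtodm$ explicitly on decorated trees, verify bijectivity, check operadic compatibility, and finally establish continuity by comparing the two local bases.

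Given $\psplone\in\td_{\DM}(\finiteset)$ with $|\finiteset|>1$ and underlying tree $\tree$, I stabilize $\tree$ to a stable $\finiteset$-tree $\tree'$ by contracting the bivalent root together with each bivalent successor-of-leaf vertex. This sets up a canonical bijection between the non-bivalent vertices $\vertex$ of $\tree$ and the vertices $\vertextwo$ of $\tree'$, under which $\insof(\vertex)$ is identified with $\insof(\vertextwo)$. At any such $\vertex$ every incoming edge of $\tree$ is internal, so the decoration in $\marking_\vertex$ is a configuration of $|\insof(\vertex)|$ distinct points of $\cC$ modulo $\affc$, i.e., precisely an element of $\msph(\insof(\vertextwo))$. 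Assembling over $\tree'$ yields $\equivtodm(\psplone)\in\Mbar(\finiteset)$; for $|\finiteset|=1$ the map is forced to be the unique map between one-point sets. Bijectivity is then essentially tautological: the correspondence between $\td_{\DM}$-trees and stable trees by adding or removing the canonical bivalent frame is a bijection, and on each non-bivalent vertex the decoration map is the identity under the above identification.

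For operadic compatibility I will check partial composition directly. A composition $\pspltwo\cyrc_\element\psplone$ in $\td_{\DM}$ grafts the bivalent root of $\psplone$ to the bivalent successor of the $\element$-leaf of $\pspltwo$; by the $\td$-composition rule applied to this unstable grafting, the two matched bivalent vertices are absorbed so that the previously adjacent main vertices become joined by a single internal edge, with decorations at the main vertices unchanged. Applying $\equivtodm$ to both sides, the result matches the grafting description of composition in $\Mbar$. Unit and symmetric-group compatibility are routine.

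Continuity is expected to be the main obstacle. A basic open neighborhood of $\psplone$ in the subspace topology $\td_{\DM}\subset\td$ has the form $\neiz{\psplone}{\unei}{\vertex}\cap\td_{\DM}$, allowing deformations of the decorations at each non-bivalent vertex and in particular allowing small nonzero radii along internal edges (which corresponds in $\td_{\DM}$ to merging adjacent main vertices into a single main vertex). My plan is to reinterpret each $\unei_\vertex$ as a bounded open subset of $\cC$ whose closure avoids the marked points of $\psplone_\vertex$ in the $\Mbar$-picture, and then verify that the corresponding basis neighborhood $\neiz{\equivtodm(\psplone)}{\unei}{\vertex}$ in $\Mbar$ is contained in the $\equivtodm$-image of the chosen $\td_{\DM}$-neighborhood. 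The crux is to identify the $\td$-style deformation ``turn on a small nonzero radius on an internal edge'' with the $\Mbar$-style deformation ``smooth out a nodal point into a narrow neck''---the classical plumbing picture---after which matching the two bases gives continuity in both directions.
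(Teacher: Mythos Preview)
Your definition of $\equivtodm$, the bijectivity argument, and the operadic compatibility check are all correct and essentially identical to the paper's treatment: forget the bivalent frame, identify the remaining decorations with $\msph(\insof(\vertex))$, and observe that composition in $\td_{\DM}$ is grafting-then-forgetting, exactly matching composition in $\Mbar$.

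The continuity argument, however, has the direction reversed. What you describe---starting from a $\td_{\DM}$-basic open set, reinterpreting its $\unei_\vertex$ as the bounded avoiding-open-sets in the $\Mbar$ topology, and showing the resulting $\Mbar$-neighborhood lands inside the $\equivtodm$-image---is openness of $\equivtodm$, i.e., continuity of the inverse. The proposition only asks for continuity of $\equivtodm$ itself, which requires the opposite containment: given a basic $\Mbar$-neighborhood $\neiz{\psplone}{\unei}{\vertex}$, produce a $\td_{\DM}$-neighborhood $\neiz{\psplone}{\unei'}{\vertex}$ whose image sits inside it. The paper does exactly this, and quite explicitly: it fixes the normalization with $\cent_1=0$, $\cent_2\in\rR_+$, diameter $\tfrac12$, picks $\bigR>1$ bounding all $\unei_\vertex$, then chooses $\smallepsilon$ so small that disks of radius $(\bigR+1)\smallepsilon$ about each marked point avoid $\unei_\vertex$, and takes $\unei'_\vertex$ to be the $\smallepsilon$-box around $\psplone_\vertex$. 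The verification that $\equivtodm(\neiz{\psplone}{\unei'}{\vertex})\subset\neiz{\psplone}{\unei}{\vertex}$ is then the plumbing picture you allude to: a point with small nonzero internal radius $\radi_\edge'$ contributes an annulus with inner radius $\radi_\edge'$ and outer radius $\bigR\radi_\edge'$, and gluing these into the next vertex exhibits a conformal subsurface containing $\unei_\vertex$, hence a contraction conformal on $\unei_\vertex$.

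Note also that the paper deliberately proves only this one direction here. Continuity of $\equivtodm^{-1}$ is obtained elsewhere by invariance of domain, after separately establishing that $\td_{\DM}(\finiteset)$ is locally homeomorphic to $\rR^{2|\finiteset|-4}$. Your plan to do ``both directions'' directly would bypass that dimension argument, but the reverse direction is genuinely harder to carry out from the $\Mbar$ definition of the basis (arbitrary contractions conformal away from $\unei_\vertex$ are not obviously of plumbing type), so the paper's asymmetric strategy is the more efficient one. Your plumbing intuition is correct; you just need to run it in the forward direction with the explicit $\smallepsilon$--$\bigR$ control.
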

\begin{proof}[of Proposition~\ref{prop:dmisdm}]
The space $\Mbar(\finiteset)\cong\Mbar_{0,|\finiteset|+1}$ is a manifold of dimension $2(|\finiteset|+1)-6=2|\finiteset|-4$ as well.  
Invariance of domain says that an injective continuous map from a space modelled locally by $\rR^{2|\finiteset|-4}$ to a manifold of the same dimension is a homeomorphism onto its image.  Therefore $\equivtodm$ is an isomorphism of topological operads.
\end{proof}

For Propositions~\ref{prop:deformationretract}~and~\ref{prop:XDMisDM}, the $|\finiteset|=1$ case is obvious, and for most of the rest of the section we shall assume $|\finiteset|>1$ without comment.

The following follows directly from the definition of $\td_{\DM}$:
\begin{lemma}\label{lemma:baseonXDM}Let $\psplone$ be in $\td_{\DM}(\finiteset)$ with underlying tree $\tree$ (and a fixed order on $\finiteset$).  For $\vertex$ a vertex of $\tree$, let $\psplone_\vertex$ be the decoration on $\vertex$.  Let $\unei_\vertex$ denote an open set containing $\psplone_\vertex$ in $\augmarking_\vertex$.  Then the images of sets of the form $\prod \unei_\vertex$ under the map
\[\prod \augmarking_\vertex\to \td(\finiteset)\]
land in $\td_{\DM}$ and form a basis for the subspace topology on $\td_{\DM}$.
\end{lemma}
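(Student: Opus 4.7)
The lemma has two parts: that the images of products of open neighborhoods lie inside $\td_{\DM}$, and that they give a local basis at $\psplone$ for the subspace topology. My plan is to verify these two parts in order.

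For the containment claim, I would analyze the topologizing map vertex-by-vertex on the tree $\tree$ underlying $\psplone\in\td_{\DM}$. This tree has three kinds of vertex: the bivalent root, decorated by $(0,0)$; bivalent leaf-successors, at each of which $\augmarking_\vertex$ is a single $\affc$-orbit and $\unei_\vertex$ is therefore a single point offering no perturbation; and higher-valence interior vertices. A contraction occurs at an internal edge precisely when the corresponding factor in the parent's decoration acquires a positive radius. Contractions between pairs of higher-valence interior vertices preserve both defining $\td_{\DM}$ conditions, since the merged vertex again has valence at least three. The potentially problematic contractions are those at edges incident to the root or to a leaf-successor, as these would raise the valence of a vertex required to be bivalent. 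On the $\td_{\DM}$-slice of $\prod \unei_\vertex$---precisely where the root factor remains $(0,0)$ and each factor of a parent indexed by a leaf-successor internal edge remains at radius zero---these forbidden contractions do not happen, and the image lands in $\td_{\DM}$.

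For the basis claim, I would use Lemma~\ref{lemma:basis} directly. The images under consideration are exactly the restrictions to $\td_{\DM}$ of the $\td$-basis sets $\neiz{\psplone}{\unei}{\vertex}$, so by general point-set topology they form a basis for the subspace topology on $\td_{\DM}$. The covering axiom is clear, since $\psplone$ itself is the image of $\prod\{\psplone_\vertex\}$. The intersection axiom reduces to the corresponding property for $\td$, which was established in the proof of Lemma~\ref{lemma:basis} via the uniqueness of the normalized representatives $\tilde{\pspltwo}_\vertex$ and the associativity of $\comp$; the same argument carries over verbatim.

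The main obstacle is the containment check, specifically for perturbations at the root and at those factors of parents' decorations that are indexed by a leaf-successor internal edge. Both $\augmarking_\vertex$ at the root and the relevant factor spaces at interior parents are positive-dimensional around the configurations they carry in $\psplone$, so a generic open neighborhood contains perturbations that would trigger forbidden contractions and push the image outside $\td_{\DM}$. The content of the lemma is that the $\td_{\DM}$-portion of each image precisely cuts these factors down to zero, leaving effective parameters concentrated at the higher-valence interior vertices in numbers matching the $2|\finiteset|-4$ dimension count predicted by Proposition~\ref{prop:isamanifold}.
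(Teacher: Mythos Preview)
You have correctly identified that the containment claim, read literally, is false. Perturbing the root decoration away from $(0,0)$, or perturbing a factor at a higher-valence vertex indexed by the internal edge to a leaf-successor so that its radius becomes nonzero, produces a point of $\td$ lying outside $\td_{\DM}$: in the first case the root decoration is no longer $(0,0)$ (or, if the radius is nonzero and the edge contracts, the merged root has valence at least three); in the second case the successor of the leaf in the contracted tree acquires valence at least three. Your diagnosis of these two failure modes is accurate, and your concrete example in the final paragraph is to the point.

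The paper offers no proof beyond the remark that the lemma ``follows directly from the definition of $\td_{\DM}$.'' Combined with your analysis, this indicates the statement is imprecisely worded; the intended content is the weaker and genuinely immediate assertion that the intersections $\neiz{\psplone}{\unei}{\vertex}\cap\td_{\DM}$ form a local basis at $\psplone$ for the subspace topology, which is automatic from Lemma~\ref{lemma:basis} and the definition of the subspace topology. Your proposal would be sharper if it said this outright: the first clause of the lemma fails as written, the second holds under the corrected reading, and only the corrected reading is needed downstream (the subsequent lemmas work with the restricted spaces $\dmmarking_\vertex$, which build in exactly the constraints you isolate). As you have organised it, the opening paragraph appears to argue for containment on a slice, and only the final paragraph concedes the failure; this obscures the valid observation you are making.
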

\begin{defi}
Let $\vertex$ be a vertex of a non-planar unreduced rooted $\finiteset$-tree $\tree$.  We will define the {\em Deligne--Mumford--Knudsen marking space} $\dmmarking_\vertex$ of $\vertex$ as a subset of the marking space $\augmarking_\vertex$. 

If $\vertex$ is the root, $\dmmarking_\vertex$ is the singleton $\{(0,0)\}$. Otherwise, $\dmmarking_\vertex$ is the subspace of $\conft(\edgesof(\vertex))/\affc$ such that $\radi_\edge=0$ for every factor $(\cent_\edge,\radi_\edge)$ indexed by an edge coming from a bivalent vertex. 
\end{defi}
\begin{lemma}\label{lemma:bijectionofbase}
For a fixed $\psplone\in\td_{\DM}(\finiteset)$ and fixed ordering of $\finiteset$, the restriction of the map $\augmarking_\vertex\to\td_{\DM}$ of Section~\ref{sec:charm} to a map $\prod \dmmarking_\vertex\to\td_{\DM}$ is injective.
\end{lemma}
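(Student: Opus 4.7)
The plan is to prove injectivity by reconstructing $\pspltwo \in \prod_\vertex \dmmarking_\vertex$ from its image $\xi \in \td_{\DM}$. Suppose $\pspltwo$ and $\pspltwo'$ have the same image; I will show they coincide.

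First, the underlying tree of $\xi$ is the contraction $\tree/\someedges$, where $\someedges$ is the set of internal edges of $\tree$ whose decoration has positive radius. Since $\tree$ is fixed and the underlying tree of $\xi$ is read off directly from $\xi$, the set $\someedges$ is determined by $\xi$ and therefore coincides for $\pspltwo$ and $\pspltwo'$. Moreover, the constraints defining $\dmmarking$ force $\someedges$ to consist only of internal edges both of whose endpoints are at-least-trivalent: any factor indexed by an edge to a bivalent child is forced to have radius zero, and the single factor of the root decoration is pinned at $(0,0)$.

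Second, I recover each vertex decoration from $\xi$ together with the combinatorial data. Each equivalence class $[v]$ of vertices under $\someedges$ forms a connected subtree of $\tree$ whose internal edges all lie in $\someedges$, and $\xi_{[v]}$ is an iterated $\comp$-composition of the $\pspltwo_w$ for $w \in [v]$ along that subtree. The combinatorics of $\tree$ records which factor of $\xi_{[v]}$ arose from which $w$. Working from the root-most vertex of the subtree downward: the sub-configuration of $\xi_{[v]}$ arising from a contracted child $v_i$ occupies a specific disk of center $\cent$ and radius $\radi$, which are exactly the parameters of the corresponding factor in the parent's decoration, while the shape of this sub-configuration, renormalized by the standard $\affc$-representative chosen in Section~\ref{sec:charm}, recovers $\pspltwo_{v_i}$. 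Induction down the subtree recovers every $\pspltwo_w$, so $\pspltwo = \pspltwo'$.

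The main obstacle is verifying that the normalization conventions from Section~\ref{sec:charm} really do pin down a unique $\affc$-representative at every stage of the induction, and that the recovery is compatible across the different equivalence classes. Associativity of $\comp$ ensures the iterated composition defining $\xi_{[v]}$ may be unwound in the top-down order used, and the rigidity of the normalization guarantees each step produces a single answer.
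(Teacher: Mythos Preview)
Your overall strategy agrees with the paper: first read off the contraction set $\someedges$ from the underlying tree of the image, then reconstruct the vertex decorations by induction within each equivalence class. Your observation that $\someedges$ can contain only edges joining two at-least-trivalent vertices is correct and is exactly what guarantees enough distinct marked points for the inversion below.

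The gap is in the inductive step. You assert that the sub-configuration coming from a contracted child $v_i$ ``occupies a specific disk of center $\cent$ and radius $\radi$'' and that renormalizing its shape by the standard $\affc$-representative ``recovers $\pspltwo_{v_i}$.'' But that sub-configuration is not merely a scaled and translated copy of $\tilde{\pspltwo}_{v_i}$; it is the full iterated composite of \emph{all} decorations above $v_i$ within the equivalence class. The normalization of Section~\ref{sec:charm} pins down a representative using $\cent_1$, $\cent_2$, and the diameter of the disks of $\pspltwo_{v_i}$ itself, and these quantities are not directly readable from the composite sub-configuration (for instance, the diameter changes under further composition, and $\cent_2$ may have been replaced by a whole cluster of points). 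So from the bottom you cannot extract $(\cent,\radi)$ and $\pspltwo_{v_i}$ in one stroke.

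The paper runs the induction in the opposite direction, from the \emph{topmost} vertices of the equivalence class toward its root. At the top there are no further composed children, so the normalized composite $\psplone_\vertex^\edge$ is known explicitly as a configuration of points. Because it contains at least two distinct marked points (the child vertex being at least trivalent), the equation $(\cent,\radi)\comp \psplone_\vertex^\edge = (\text{observed})$ determines $(\cent,\radi)$ uniquely; one then recovers $\psplone_\vertex$ via the diameter-and-two-centers normalization. Your bottom-up scheme could be repaired, but only by deferring the computation of $(\cent,\radi)$ until after the recursive call into the child has returned---which amounts to the paper's top-down order.
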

\begin{proof}
We want to show that a configuration in $\td_{\DM}$ uniquely determines the decorations on the vertices that were identified to make it.  Let $\tree$ be the underlying tree of $\psplone$. 

Let $\pspltwo$ be a point in the image of the map determined by $\psplone$; $\pspltwo$ determines an edge contraction set $\someedges$.   Let $\finiteset=\{\vertex_\element\}$ be a vertex in the contracted tree $\tree_\someedges$. We would like to recover the decorations on the vertices $\vertex_\element$ involved in the contraction from the decoration on the contracted vertex.  We will proceed downward through the set of contracted vertices, starting with the topmost vertices of $\finiteset$.

Assume we have recovered the decoration on every vertex above the vertex $\vertex$; we wish to recover the decoration $\psplone_\vertex$ of $\vertex$ from the contraction vertex decoration $\pspltwo_\finiteset$.  Essentially, we know that the decoration on $\finiteset$ is equal to a composition, so by inverting the composition map, which is just $\comp$, where we can, we will be able to recover the marking on $\vertex$.

By assumption, we know all the markings above $\vertex$, so we can compose the appropriate representatives to get a configuration $\psplone_\vertex^\edge$ of points in the disk for each incoming internal edge $\edge$ of $\vertex$ coming from $\finiteset$. There are only points because at the top level, every disk must have radius $0$.  Consider each $\psplone_\vertex^\edge$ with the normalization used for composition.  Because $\cent_1$ in each representative is $0$, the configuration $\psplone_\vertex^\edge$ has a marked point at $0$.  It has another marked point at $\cent_1\ne 0$.  In order that $(\cent,\radi)\comp \psplone_\vertex^\edge=(\cent',\radi')\comp \psplone_\vertex^\edge$, we must have 
$\cent+0\radi=\cent'+0\radi$ so $\cent=\cent'$ and $\cent+\radi\cent_1=\cent'+\radi'\cent_1$ so $\radi=\radi'$.  
This shows that we can uniquely factor $\pspltwo_\finiteset$ as the composition of some $\psplthree$ with the various $\psplone_\vertex^\edge$.  This gives us a center and radius vector in $\psplthree$ for each incoming edge of $\vertex$.  Now we want to decompose $\psplthree$ as the composition of some $\psplthree'$ and $\psplone_\vertex$.  We know which centers and radii in $\psplthree$ sitting in the standard disk came from $\psplone_\vertex$, including $\cent_0$ and $\cent_1$, the image of $0$ and a positive real.  We can also measure the Euclidean diameter of the union of the disks involved.  So setting $(\cent,\radi)\comp \psplone_\vertex$ to be the points and distances we know, we get the equations
\begin{equation*}\cent + 0\radi = \cent_0,\ \cent_1\in \cent+\radi\rR_+,\end{equation*} and the diameter is $\frac{|\radi|}{2}$.
These uniquely specify $\cent$ and $\radi\ne 0$ and we can compose on the left by the inverse to $(\cent,\radi)$ to obtain $\psplone_\vertex$.
\end{proof}
\begin{lemma}\label{lemma:baseisconty}
The map $\prod \dmmarking_\vertex\to\td_{\DM}$ is continuous.
\end{lemma}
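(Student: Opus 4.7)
The plan is to apply Lemma~\ref{lemma:baseonXDM} to describe the topology on $\td_{\DM}$ using the same topologizing maps that govern the map in question, and then reduce the continuity statement to continuity of the normalization-then-$\comp$ operation that defines those maps. Fix $\psplone \in \td_{\DM}(\finiteset)$ with tree $\tree$, let $f\colon \prod_{\vertex \in \verticesof(\tree)} \dmmarking_\vertex \to \td_{\DM}$ be the map in question, choose $\pspltwo$ in the domain, and set $\psplthree = f(\pspltwo)$. The first observation is that the underlying tree $\tree'$ of $\psplthree$ is the contraction of $\tree$ along the set $\someedges$ of internal edges with nonzero radius in $\pspltwo$, and that $\someedges$ consists only of edges strictly between non-bivalent vertices, because the definition of $\dmmarking_\vertex$ forces radii on edges incoming from bivalent vertices to vanish. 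In particular the root and the leaf-successors of $\tree$ persist as bivalent vertices of $\tree'$, confirming $\psplthree \in \td_{\DM}$.

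Next I would invoke Lemma~\ref{lemma:basis} to replace any basis neighborhood of $\psplthree$ in $\td_{\DM}$ with one of the form $\neiz{\psplthree}{\unei'}{\vertex'}$ centered at $\psplthree$ itself, and construct an open neighborhood of $\pspltwo$ whose image lies inside it. For each $\vertex' \in \verticesof(\tree')$ let $\{\vertex_1^{\vertex'},\ldots,\vertex_{k}^{\vertex'}\}$ be the set of vertices of $\tree$ contracted to $\vertex'$. When $\vertex'$ is bivalent (the root or a leaf-successor), $k=1$ and the relevant factor of $\dmmarking$ is a single point, so there is nothing to choose. When $\vertex'$ is non-bivalent, all the $\vertex_i^{\vertex'}$ are also non-bivalent, and the task becomes to choose open sets $\unei_{\vertex_i^{\vertex'}} \subseteq \dmmarking_{\vertex_i^{\vertex'}}$ around $\pspltwo_{\vertex_i^{\vertex'}}$ whose image under the group composition map lies inside $\unei'_{\vertex'} \subseteq \augmarking_{\vertex'}$. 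Taking the product of all such open sets over $\vertex'$ gives the desired open neighborhood; associativity of $\comp$ then identifies the $f$-image of any of its points with a point of $\neiz{\psplthree}{\unei'}{\vertex'}$, even when the underlying tree is a strict further contraction of $\tree'$ (which happens when a nearby point of the domain acquires nonzero radius on an edge outside $\someedges$ between two non-bivalent vertices). This is harmless because $\unei'_{\vertex'}$ is open in $\augmarking_{\vertex'}$ and absorbs such perturbations.

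The central technical point is therefore the continuity of the group composition map $\prod_i \dmmarking_{\vertex_i^{\vertex'}} \to \augmarking_{\vertex'}$, which factors as normalization of each input (taking the unique $\affc$-representative with $\cent_1 = 0$, $\cent_2 \in \rR_+$, and Euclidean diameter $\tfrac{1}{2}$) followed by iterated application of $\comp$ along the contracted edges inside the group. The second step is continuous because $\comp$ is polynomial. The first step is where the only subtlety lies---one must check that the normalization does not degenerate---but every non-bivalent vertex in a $\td_{\DM}$-tree has at least two incoming edges carrying disks or points with distinct centers and positive total Euclidean diameter, so the normalizing $\affc$-transformation exists uniquely and varies continuously with the configuration. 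This non-degeneracy is automatic from the stability built into $\td_{\DM}$, so the hardest part of the plan poses no real obstacle.
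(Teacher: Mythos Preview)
Your proposal is correct and follows essentially the same approach as the paper: reduce to basis neighborhoods centered at the image point (via Lemma~\ref{lemma:baseonXDM}, the $\td_{\DM}$ variant of Lemma~\ref{lemma:basis}) and then observe that the map factors locally as normalization followed by iterated $\comp$, which is continuous. The paper's proof is much terser, but your added detail---that the contracted edges lie strictly between non-bivalent vertices so the image remains in $\td_{\DM}$, and that the normalizing $\affc$-element is well-defined and continuous because the relevant vertices are at least trivalent---fills in points the paper leaves implicit.
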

\begin{proof}
Let $\tree$ be the tree over which the product in the domain is taken.  A variation of Lemma~\ref{lemma:basis} for $\td_{\DM}$ shows that it suffices to show that for $\psplone$ in the (open) image of this map and $\neiz{\psplone}{\unei}{\vertex}$ a neighborhood of $\psplone$ contained in this image, the preimage of $\neiz{\psplone}{\unei}{\vertex}$ is a neighborhood of the preimage of $\psplone$, which is unique by Lemma~\ref{lemma:bijectionofbase}.  The underlying tree $\tree_\someedges$ of $\psplone$ is obtained from $\tree$ by contracting some edges, and the restriction of the map
\[\prod_{\vertex\in \verticesof(\tree)} \dmmarking_\vertex\to\td_{\DM}\]
to an appropriate open subspace factors as
\[\xymatrix{\text{subspace of }\prod_{\tree} 
{{\dmmarking}_\vertex}
\ar[r]
\ar[dr]&\prod_{\tree_\someedges}
{{\dmmarking}_\vertex}\ar[d]
\\&\td_{\DM}
}
\]
where the horizontal map is composition of normalized representatives with $\comp$, which is continuous, yielding the result.
\end{proof}
\begin{proof}[of Proposition~\ref{prop:isamanifold}]
By Lemmas~\ref{lemma:bijectionofbase}~and~\ref{lemma:baseisconty}, every point in $\td_{\DM}(\finiteset)$ has a neighborhood that looks like $\prod \dmmarking_\vertex$ for some tree $\tree$ with a bivalent root vertex and bivalent successor vertices for every leaf. For ease, forget these to get a stable tree $\tree'$.  By using a different $\affc$ parameterization for the non-bivalent vertices, for example the one where $\cent_0$ is sent to $0$ and $\cent_1$ to $1$, it is easy to see that $\dmmarking_\vertex$ is a manifold with $2\intinsof(\vertex)+\extinsof(\vertex)-2$ complex parameters.  $\displaystyle \sum_\vertex \intinsof(\vertex)$ is the total number of internal edges, which is one less than the total number of vertices.  $\displaystyle \sum_\vertex \extinsof(\vertex)$ is $|\finiteset|$.  Then $\prod \dmmarking_\vertex$ has total real dimension $2|\finiteset|-4$, as desired.
\end{proof}
\begin{proof}[of Proposition~\ref{prop:deformationretract}]
Fix a point $\psplone$ in $\td(\finiteset)$ with underlying tree $\tree$.  Precompose and postcompose it with the trivialized annulus $(0,0)$ at every leaf and the root.  This gives a continuous map to $\td_{\DM}(\finiteset)$, which is the retraction.  
Because it does not change a tree whose leaf and root vertices are bivalent and whose root vertex is decorated with $(0,0)$, it is the identity on $\td_{\DM}(\finiteset)$.

The homotopy is a map $\homotopy:\td(\finiteset)\times [0,1]\to \td(\finiteset)$; it is given by precomposing at each leaf and postcomposing at the root with the (possibly trivialized) annulus $(0,\timez)$.  This is clearly the retraction map at $\timez=0$ and the identity at $\timez=1$.  It fixes $\td_{\DM}(\finiteset)$ as before.
\end{proof}
\begin{remark*}
The homotopy does not respect the operad structure in any intermediate stage.  Its purpose is just to ensure that the induced map from the homology of $\td_{\DM}$ to that of $\td$ is an isomorphism.
\end{remark*}
Now we will prove Proposition~\ref{prop:XDMisDM}.
\begin{defi}
Fix a point $\psplone$ in $\td_{\DM}(\finiteset)$.  Forget the bivalent vertices in the underlying tree of $\psplone$.  The decorations on each remaining vertex are configurations of points in the plane up to $\affc$ indexed by the incoming edges of the vertex.  By definition, such a configuration is a point of $\msph(\insof(\vertex))$, and the product of these points is a point in $\Mbar$.  This suffices to define
\[\equivtodm:\td_{\DM}\to \Mbar.\]
\end{defi}
\begin{proof}[that $\equivtodm$ is an operad map]
Composition in $\td_{\DM}$ grafts underlying trees, forgetting the two internal bivalent vertices that this creates, and preserves all other decorations.  $\equivtodm$ is compatible with this process, and thus is a map of operads.
\end{proof}
\begin{proof}[that $\equivtodm$ is bijective]
For an element in $\Mbar(\finiteset)$, insert a vertex on each external edge of the underlying tree.  Keep the decorations on each old vertex, mark the new root with $(0,0)$, and each new leaf vertex with the unique possible marking.  This constitutes an inverse map.
\end{proof}
\begin{proof}[that $\equivtodm$ is continuous]
Fix a point $\psplone$, which we will consider as being in both $\td_{\DM}(\finiteset)$ and $\Mbar(\finiteset)$.  Let $\neiz{\psplone}{\unei}{\vertex}$ be a neighborhood of $\psplone$ in $\Mbar$.  We must show that there is a neighborhood $\neiz{\psplone}{\unei'}{\vertex}$ in $\td_{\DM}$ contained in $\neiz{\psplone}{\unei}{\vertex}$.  Choose an ordering on $\finiteset$ and fix the normalization with $\cent_1$ at $0$, $\cent_2$ on the positive real axis and diameter $\frac{1}{2}$ for each decoration.  So we have fixed a configuration of pairs $\{(\cent_\edge,0)\}$ in $\conft(\insof(\vertex))$ for each vertex $\vertex$.

Let $\bigR> 1$ be a number so that the disk of radius $\bigR$ centered at $0$ contains $\unei_\vertex$ for each vertex (in this normalization), and choose $\smallepsilon$ to be small enough so that the disk of radius $(\bigR+1)\smallepsilon$ centered at $\cent_\edge$ is contained in the complement of $\unei_\vertex$ for every edge $\edge$ in $\insof(\vertex)$ for each vertex.  Then let $\unei'_\vertex$ consist of configurations made of pairs $(\cent_\edge', \radi_\edge')$ so that $|\cent_\edge'-\cent_\edge|<\smallepsilon$ and $|\radi_\edge'|<\smallepsilon$.

We must show that $\equivtodm(\neiz{\psplone}{\unei'}{\vertex})$ is contained in $\neiz{\psplone}{\unei}{\vertex}$.  Any point in $\neiz{\psplone}{\unei'}{\vertex}$ has a conformal subsurface at each vertex that looks like the disjoint union of a disk of radius one with some disks of radius $(\bigR+1)\smallepsilon$ removed and annuli indexed by $\insof(\vertex)$ with outer radius $\bigR\radi_\edge'$ and inner radius $\radi_\edge'$ (in the case $\radi_\edge'>0$. Composing the disk configuration into the corresponding annulus (which is part of the surface corresponding to the next vertex), we get a conformal subsurface containing $\unei_\vertex$.  There is an easy special case when $\radi_\edge'=0$.
\end{proof}
%
%
%
%
\section{Homotopy pushouts of operads}\label{sec:hpoa}
This somewhat abstract section describes pushouts of operads $\operad$ by monoids over $\operad(1)$ in the very specific setting that $\operad(1)$ is a group that acts freely on $\operad(\itn)$ for $\itn>1$, and shows that it is relatively easy to find homotopy models for such pushouts.
\begin{defi}\label{defi:scube}
Let $\finiteset$ be a finite set. The deleted $\finiteset$-cube category $\Scube$ has as its objects proper subsets of $\finiteset$ and morphisms inclusions.
\end{defi}
\begin{defi}
Let $\functora$ and $\functorb$ be two functors from the discrete category $\finiteset$ to a category $\category$ with small colimits. Let $\nattran$ be a natural transformation $\functora\to \functorb$.  Consider the functor from $\Scube$ to $\category$ which takes $\finitesubset\subset \finiteset$ to 
\[\prod_{\element\in \finitesubset} \functorb(\element)\times\prod_{\element \notin \finitesubset}\functora(\element)\]
with maps from $\nattran$.  The colimit of this functor is denoted $\cubecol{\finiteset}{\functora}{\functorb}$; it comes equipped with a map to $\prod_\finiteset \functorb(\element)$.
\end{defi}
\begin{lemma}\label{lemma:cubecofibration}
Let $\functora$ and $\functorb$ be two functors from $\finiteset$ to cofibrant spaces and $\nattran$ be a natural cofibration.  Then the induced map $\cubecol{\finiteset}{\functora}{\functorb}\to \prod_\finiteset \functorb(\element)$ is a cofibration.
\end{lemma}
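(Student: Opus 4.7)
The plan is induction on $|\finiteset|$. The base case $|\finiteset|=1$ is immediate: the deleted cube $\Scube$ has a single object (the empty subset), so $\cubecol{\finiteset}{\functora}{\functorb}$ collapses to $\functora(\element)$ and the induced map to $\functorb(\element)=\prod_\finiteset\functorb(\element)$ is precisely the cofibration $\nattran(\element)$.

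For the inductive step I would pick $\element_0\in\finiteset$, set $\finiteset'=\finiteset\setminus\{\element_0\}$, and split the indexing category $\Scube$ along $\element_0$: proper subsets of $\finiteset$ not containing $\element_0$ are exactly the (arbitrary) subsets of $\finiteset'$, forming a subposet with terminal object $\finiteset'$, while proper subsets containing $\element_0$ biject, via $\finitesubset\mapsto\finitesubset\setminus\{\element_0\}$, with the proper subsets of $\finiteset'$. Computing the colimit piecewise along this decomposition should exhibit $\cubecol{\finiteset}{\functora}{\functorb}$ as the pushout
\[
\xymatrix{
\cubecol{\finiteset'}{\functora}{\functorb}\times\functora(\element_0)\ar[r]\ar[d] & \prod_{\element\in\finiteset'}\functorb(\element)\times\functora(\element_0)\ar[d] \\
\cubecol{\finiteset'}{\functora}{\functorb}\times\functorb(\element_0)\ar[r] & \cubecol{\finiteset}{\functora}{\functorb}
}
\]
with horizontal maps induced by the inductively understood canonical map and vertical maps induced by $\nattran(\element_0)$; moreover this identifies $\cubecol{\finiteset}{\functora}{\functorb}\to\prod_\finiteset\functorb(\element)$ with the pushout-product (``corner'') map of $\cubecol{\finiteset'}{\functora}{\functorb}\to\prod_{\finiteset'}\functorb(\element)$ against $\nattran(\element_0)\colon\functora(\element_0)\to\functorb(\element_0)$.

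The inductive hypothesis supplies the former as a cofibration, and the latter is a cofibration by assumption. Since topological spaces under cartesian product form a monoidal model category satisfying Quillen's pushout-product axiom for cofibrations between cofibrant objects, and cofibrancy propagates throughout the induction (finite cartesian products of cofibrant spaces are cofibrant, as are the pushouts of cofibrations appearing above), the resulting corner map is a cofibration, completing the induction. The main technical obstacle is verifying the pushout decomposition in the inductive step: one must check carefully that restricting the cube diagram to subsets omitting $\element_0$ yields $\prod_{\finiteset'}\functorb(\element)\times\functora(\element_0)$ (using the terminal object $\finiteset'$ of that subposet), that restriction to subsets containing $\element_0$ yields $\cubecol{\finiteset'}{\functora}{\functorb}\times\functorb(\element_0)$, and that the crossing inclusions in $\Scube$ between these two subcategories glue the pieces exactly along $\cubecol{\finiteset'}{\functora}{\functorb}\times\functora(\element_0)$ as in the square above.
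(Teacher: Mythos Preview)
Your proposal is correct and essentially identical to the paper's proof: both proceed by induction on $|\finiteset|$, decompose $\cubecol{\finiteset}{\functora}{\functorb}$ as the same pushout square along a distinguished element, and conclude via the pushout-product axiom (which the paper packages as Proposition~\ref{prop:modelcatfact}(3)). You are more explicit than the paper in justifying the pushout decomposition and in tracking cofibrancy of the intermediate objects, but the argument is the same.
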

\begin{proof}
We will proceed by induction. If $|\finiteset|=1$, the statement is obvious. Suppose $\finiteset=\finiteset'\sqcup \{\element\}$. Then $\cubecol{\finiteset}{\functora}{\functorb}$ is equal to the pushout of the following diagram.
\[
\xymatrix{
\functora(\element)\times \cubecol{\finiteset'}{\functora}{\functorb}\ar[d]\ar[rr]&& \functora(\element)\times\prod_{\element'\in\finiteset'}\functorb(\element')
\\
\functorb(\element)\times\cubecol{\finiteset'}{\functora}{\functorb}}
\]
By induction, the map $\cubecol{\finiteset'}{\functora}{\functorb}\to \prod_{\element'\in\finiteset'}\functorb(\element')$ is a cofibration.

Then by Proposition~\ref{prop:modelcatfact}, the induced map $\cubecol{\finiteset}{\functora}{\functorb}\to \functorb(\element)\times \prod_{\element'\in\finiteset'}\functorb(\element')$ is a cofibration. This is the desired result.
%
%
%
\end{proof}
\begin{defi}\label{defi:treesthing}
Fix a collection $\sspace$, a space $\spacea$, and a nonempty ordered finite set $\finiteset$. Let $\trees{\finiteset}{\itk}(\sspace,\spacea)$ be the space:
\[
\coprod_{\tree\in\trees{\finiteset}{\itk}}\prod_{\vertex\in \verticesof(\tree)}\sspace(\edgesof(\vertex))\times \prod_{\edgesof(\tree)}\spacea.
\]
\begin{figure}[hb!]
\begin{pspicture}(3,4)
\psline(2,1)(0,3)
\psline(2,3)(1,2)
\psline(2,0)(2,1)(3,2)
\rput(0,3.2){$2$}
\rput(2,3.2){$3$}
\rput(3,2.2){$1$}
\rput(2,1){$\bullet$}
\rput(1,2){$\bullet$}
\rput(.5,2){$\sspace(2)$}
\rput(1.5,1){$\sspace(2)$}
\rput(2.2,.5){$\spacea$}
\rput(2.45,1.65){$\spacea$}
\rput(1.65,1.65){$\spacea$}
\rput(1.45,2.65){$\spacea$}
\rput(.65,2.65){$\spacea$}
\end{pspicture}
\caption{A schematic picture of one component of $\trees{\{1,2,3\}}{2}(\sspace,\spacea)$.}
\end{figure}

Let $\subspacegroup$ be a subspace of $\spacea$; then let $\trees{\finiteset}{\itk}^\subspacegroup(\sspace,\spacea)$ be
\[\coprod_{\tree\in\trees{\finiteset}{\itk}}\prod_{\vertex\in \verticesof(\tree)}\sspace(\edgesof(\vertex))\times\prod_{\edgesof^\ext(\tree)}\spacea\times \cubecol{\edgesof^\inter(\tree)}{\subspacegroup}{\spacea},
\]the subspace of $\trees{\finiteset}{\itk}(\sspace,\spacea)$ consisting of points where at least one of the factors in the subproduct $\prod_{\edgesof^\inter(\tree)}\spacea$ is actually in $\subspacegroup$.
\end{defi}
\begin{defi}
Fix $\sspace$, $\spacea$, and $\subspacegroup$ as in Definition~\ref{defi:treesthing}.
Suppose that for any internal edge $\edge$ of a tree $\tree$ (between vertices $\vertex$ and $\vertex'$), there are {\em edge collapse maps} $\sspace(\insof(\vertex'))\times \subspacegroup\times \sspace(\insof(\vertex))\to \sspace(\insof(\{\vertex,\vertex'\}))$, where $\{\vertex,\vertex'\}$ is the contraction vertex of $\tree_\edge$.  Suppose these maps, along with canonical isomorphisms, extend the assignment $\treesone{\finiteset}\to\coprod \trees{\finiteset}{\itk}(\sspace,\subspacegroup)$ to a functor $\functor$ from trees to spaces.  That is, the edge collapse maps satisfy associativity and identity constraints.

In this case, for $|\finiteset|>1$, define $\col{\finiteset}{0}{\sspace,\subspacegroup,\spacea}$ to be $\trees{\finiteset}{1}(\sspace,\spacea)\cong \spacea\times\sspace(\finiteset)\times\spacea^\finiteset$. Suppose we have defined $\col{\finiteset}{\itk}{\sspace,\subspacegroup,\spacea}$ so that it accepts a map from $\trees{\finiteset}{\itk+1}^\subspacegroup(\sspace,\spacea)$. Define $\col{\finiteset}{\itk+1}{\sspace,\subspacegroup,\spacea}$ to be the pushout of the following diagram:
\[\xymatrix{
\trees{\finiteset}{\itk+1}^\subspacegroup(\sspace,\spacea)\ar[rr]\ar[d]&&\col{\finiteset}{\itk}{\sspace,\subspacegroup,\spacea}\\
\trees{\finiteset}{\itk+1}(\sspace,\spacea)}.\]
Then the morphisms of the functor $\functor$ induce a map from $\trees{\finiteset}{\itk+2}^\subspacegroup(\sspace,\spacea)$ to $\col{\finiteset}{\itk+1}{\sspace,\subspacegroup,\spacea}$, which is well-defined by functoriality (this also establishes the map in the base case).

Let $\colall{\finiteset}{\sspace,\subspacegroup,\spacea}$ be the colimit of $\col{\finiteset}{\itk}{\sspace,\subspacegroup,\spacea}$, which stabilizes at some finite $\itk$ dependent on $|\finiteset|$.

If $|\finiteset|=1$, let $\colall{\finiteset}{\sspace,\subspacegroup,\spacea}$ be $\trees{\finiteset}{0}(\sspace,\spacea)=\spacea$.
\end{defi}
\begin{defi}
A map $\mapofcollections$ between two collections is a {\em cofibration of pointed collections} if it has the left lifting property with respect to all morphisms of pointed collections that are Serre fibrations on each finite set.  We call a pointed collection {\em cofibrant as a pointed collection} if the morphism from $\unitcollection$ is a cofibration of pointed collections.
\end{defi}
We will use Berger-Moerdijk's model category structure~\cite{BergerMoerdijk:AHTO}:
\begin{thm}\label{thm:modeloperads}
There is a model category structure on topological operads where the fibrations (weak equivalences) are morphisms of operads which are Serre fibrations (induce isomorphisms of all homotopy groups) on each finite set.
\end{thm}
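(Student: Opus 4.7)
The plan is to obtain this model structure by applying Kan's transfer principle along the free-forgetful adjunction $F\dashv U$ between the category of topological collections (symmetric sequences) and the category of topological operads. First I would equip collections with the projective model structure, in which a morphism is a weak equivalence or fibration if and only if each of its components is a weak equivalence or Serre fibration of topological spaces; this is cofibrantly generated, with generators obtained arity-by-arity from the generating (trivial) cofibrations of the Quillen model structure on topological spaces, formally adjoined with the appropriate symmetric group actions.

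Next I would invoke the transfer principle: a cofibrantly generated model structure on a category $\mathcal{A}$ lifts along a left adjoint $F\colon\mathcal{A}\to\mathcal{B}$ provided two hypotheses hold. Smallness says that the small object argument converges in $\mathcal{B}$; this is standard for topological categories since $U$ preserves filtered colimits and the spheres and disks which generate the model structure on topological spaces remain small after applying $F$. Acyclicity says that any pushout in $\mathcal{B}$ of an image $F(j)$ of a generating acyclic cofibration of collections, and any transfinite composition of such pushouts, is a weak equivalence of underlying collections.

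The hard part is the acyclicity condition, because pushouts of operads are not computed arity-by-arity; one must close under all operadic compositions, so the underlying collection of the pushout is not a levelwise pushout of collections. The strategy is to exhibit such an operadic pushout $P\to Q$ as a sequential colimit $P = Q_0 \to Q_1 \to \cdots$ in which $Q_k$ is built from $Q_{k-1}$ by adjoining operadic compositions of trees that involve exactly $k$ ``new'' operations introduced by the generating map. Each stage $Q_{k-1}\to Q_k$ is itself a pushout of collections whose corner map carries a symmetric group action permuting the new operations. The crucial technical input is that this action is sufficiently free, by the combinatorics of rooted trees with distinguished vertices, that the resulting quotient pushout remains a weak equivalence of collections.

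This filtration argument, worked out by Berger--Moerdijk~\cite{BergerMoerdijk:AHTO}, relies on the fact that the Quillen model structure on topological spaces is a monoidal model structure satisfying the pushout-product axiom with cofibrant unit, and on the standard observation that free $\Sigma_n$-actions descend to weak equivalences on orbit spaces. Once acyclicity is established, Kan's transfer principle delivers the desired cofibrantly generated model category structure on topological operads, with fibrations and weak equivalences detected at each finite set as claimed.
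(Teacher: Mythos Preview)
Your proposal is correct: this is precisely the Berger--Moerdijk argument via Kan's transfer principle, with the filtration of the operadic pushout by number of new vertices as the mechanism for verifying acyclicity. The paper, however, does not give any proof of this theorem; it simply states the result and cites \cite{BergerMoerdijk:AHTO} as its source, so there is nothing to compare your argument against beyond noting that you have sketched the content of that citation.
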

\begin{thm}\label{thm:pushoutshape}
Let $\operad$ be an operad with $\operad(\emptyset)$ empty and for $|\finiteset|=1$, $\operad(\finiteset)$ an abelian group.  We will suppress $\finiteset$ and refer to $\operad(\finiteset)$ for $|\finiteset|=1$ as $\subspacegroup$.  Let $\topmon$ be a topological monoid containing $\subspacegroup$ as a submonoid (with the subspace topology).  Assume that for $|\finiteset|>1$, $\operad(\finiteset)$ is a free $\subspacegroup-\subspacegroup^{\finiteset}$ bimodule and that the quotient splits so that $\subspacegroup\times \left(\subspacegroup\backslash\operad(\finiteset)/\subspacegroup^{\finiteset}\right)\times \subspacegroup^\finiteset\cong \operad(\finiteset)$ as topological spaces with actions of $\subspacegroup$ and $\subspacegroup^\finiteset$ (the action of $\sS_\finiteset$ on the quotient and on the factors of $\subspacegroup^\finiteset$ may be twisted by some induced action on the $\subspacegroup$ factors, including the single factor on the left).
Define $\sspace$ as the collection with $\sspace(\finiteset)=\subspacegroup\backslash\operad(\finiteset)/\subspacegroup^{\finiteset}$.  We will view $\sspace(\finiteset)$ as a subspace of $\operad(\finiteset)$, using the identity in $\subspacegroup$.

Then $\colalls{\sspace,\subspacegroup,\topmon}$ is the pushout of the diagram $\operad\gets \subspacegroup\to \topmon$ in the category of operads.
\end{thm}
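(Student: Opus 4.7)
The plan is to verify the universal property of the pushout directly, treating $\colalls{\sspace,\subspacegroup,\topmon}$ as a space of decorated trees modulo edge-collapse relations. Intuitively, points are trees whose vertices are decorated by $\sspace$ and whose edges (internal and external) are decorated by $\topmon$, with the convention that an internal edge whose label lies in $\subspacegroup\subset\topmon$ may be contracted via the edge collapse map. Using the splitting $\operad(\finiteset)\cong\subspacegroup\times\sspace(\finiteset)\times\subspacegroup^\finiteset$, every $\subspacegroup$-factor present in an element of $\operad$ can be pushed out onto an adjacent edge, so this is the natural normal form for the pushout.

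First, I would equip $\colalls{\sspace,\subspacegroup,\topmon}$ with a topological operad structure. Partial composition $\cyrc_\element$ is tree grafting: the root edge of the inserted tree merges with the leaf edge at $\element$ of the ambient tree into a new internal edge whose $\topmon$-label is the product of the two preceding labels. Associativity follows from associativity of grafting together with associativity of $\topmon$; the unit is the one-edge tree labeled by the identity of $\topmon$; and compatibility with the iterative colimit construction is clear from the naturality of the edge collapse maps. Then I would construct the maps $\operad\to\colalls$ and $\topmon\to\colalls$: the map from $\topmon$ is nonempty only at arity one and sends $a$ to the one-edge tree labeled by $a$; the map from $\operad$ at arity one is the inclusion $\subspacegroup\hookrightarrow\topmon$, and at arity $|\finiteset|>1$ uses the splitting to send $k\cdot s\cdot (k_\element)_{\element\in\finiteset}$ to the $\finiteset$-corolla with vertex decoration $s\in\sspace(\finiteset)$, root edge label $k$, and leaf edge labels $k_\element$. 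These maps agree on $\subspacegroup$ by construction. The map from $\operad$ respects composition because composing two elements in $\operad$ corresponds to grafting the two corollas and then collapsing the resulting $\subspacegroup$-labeled internal edge, which is precisely the edge collapse map.

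For the universal property, let $\secondop$ be an operad accepting operad maps $\varphi\colon\operad\to\secondop$ and $\psi\colon\topmon\to\secondop$ satisfying $\varphi|_\subspacegroup=\psi|_\subspacegroup$. Define $\mapstoanoperad\colon\colalls{\sspace,\subspacegroup,\topmon}\to\secondop$ on a decorated tree by lifting each vertex decoration $s_\vertex$ to its canonical section representative in $\operad$ and applying $\varphi$, applying $\psi$ to each edge label (viewed as an arity-one operation in $\secondop$), and composing everything along the tree using $\secondop$'s operadic structure. Agreement of $\varphi$ and $\psi$ on $\subspacegroup$ makes this independent of coset choices, since any $\subspacegroup$-factor moved from a vertex onto an adjacent edge has the same image in $\secondop$, and makes the value on a tree with an internal edge labeled in $\subspacegroup$ equal to the value on its edge-collapsed version. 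Hence $\mapstoanoperad$ descends to the iterative colimit. Continuity, compatibility with grafting, and uniqueness are then immediate from the construction, since any extension of $\varphi$ and $\psi$ must send a tree to precisely this evaluation.

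The main obstacle is verifying that the iterative cube-colimit $\col{\finiteset}{\itk}{\sspace,\subspacegroup,\topmon}$ generates exactly the pushout relations and no extras. At each stage one glues $\trees{\finiteset}{\itk+1}(\sspace,\topmon)$ to the previous stage along the locus $\trees{\finiteset}{\itk+1}^\subspacegroup(\sspace,\topmon)$ on which at least one internal edge is labeled in $\subspacegroup$, so a careful induction on the combinatorial complexity of trees is needed to confirm that this locus is precisely where an edge collapse applies, and that after stabilization any two decorated-tree representations of a single pushout element are connected by a finite chain of such collapses together with the monoidal relations on $\topmon$-edge labels.
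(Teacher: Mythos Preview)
Your approach is essentially the same as the paper's: both identify $\colalls{\sspace,\subspacegroup,\topmon}$ with the decorated-tree model of the operad pushout from Fact~\ref{fact:pushout}, where $\subspacegroup$-labeled internal edges may be collapsed. The paper's proof is a two-line citation of that fact, noting that replacing $\operad$ by $\sspace$ and twisting the symmetric group action corresponds to passing to the quotient; your version spells out the operad structure, the structure maps, and the universal property explicitly, which is a legitimate and more detailed elaboration of the same idea.

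One minor point: you do not address the $\sS_\finiteset$-equivariance, which the paper flags as ``slightly tricky'' because permuting the leaves induces a twisted action on the $\subspacegroup$-factors via the splitting. This does not affect the correctness of your pushout argument, but a complete verification that $\colalls{\sspace,\subspacegroup,\topmon}$ is a symmetric operad (and that the structure maps and the induced map $\mapstoanoperad$ are equivariant) should include it.
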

\begin{proof}
This is again a version of Fact~\ref{fact:pushout}. The use of $\sspace$ rather than $\operad$ and the twisted symmetric group action corresponds to taking the necessary quotients of the coproduct.
\end{proof} 
The composition in this presentation is by grafting of trees and composition in $\topmon$. For the reader's ease, let us describe the twisted symmetric group action, which is slightly tricky, explicitly. 

Consider a point $\psplone=\prod \psplone_\vertex\times \prod \psplone_\edge$ in $\colall{\finiteset}{\sspace,\subspacegroup,\topmon}$ with underlying tree $\tree$, and a permutation $\permut\in \sS_\finiteset$.  First, $\sS_\finiteset$ acts on $\finiteset$-trees naturally, so $\permut(\psplone)$ will have underlying tree $\permut \tree$.  

The permutation $\permut$ induces an isomorphism between the incoming edges of a vertex in $\tree$ and the incoming edges of the corresponding vertex in $\permut\tree$.

An element $\psplone_\vertex$ of $\sspace(\insof(\vertex))$, viewed in $\operad(\insof(\vertex))$, is taken by this action to an element of $\operad(\insof(\vertex))$ of the form
\[\gpel_\vertex \times \psplone_\vertex' \times \prod_{\edge\in\insof(\vertex)}\gpel_\edge\] 
for some $\psplone_\vertex'\in \sspace(\insof(\vertex))$ and $\gpel_\vertex, \gpel_\edge\in \subspacegroup$.  If $\psplone_\vertex$ is the label of $\vertex$, let $\psplone_\vertex'$ be the label on $\permut(\vertex)$.  For a fixed internal edge $\edge$ labeled by $\psplone_\edge$, where $\edge$ is the outgoing edge of $\vertex$, label $\permut(\edge)$ with $\gpel_\edge \psplone_\edge\gpel_\vertex^{-1}$.  This is the twisting of the induced action mentioned above. Perform a similar operation for external edges, replacing missing elements of $\subspacegroup$ with the identity.

\begin{prop}\label{prop:onetotwo}
Let $\operad$ and $\topmon$ be a pair as in Theorem~\ref{thm:pushoutshape}, let $\operad'$ and $\topmon'$ be another such pair, and suppose there are maps between them forming the diagram
\[\xymatrix{\operad\ar[d]&\subspacegroup\ar[r]\ar[l]\ar[d]&\topmon\ar[d]\\
\operad'&\subspacegroup'\ar[r]\ar[l]&\topmon'
  }
  \]
so that $\subspacegroup\to\subspacegroup'$ is an isomorphism so we shall suppress the notation $\subspacegroup'$, $\topmon\to \topmon'$ is a weak equivalence of monoids, and $\operad\to\operad'$ is a weak equivalence of operads so that $\operad(\finiteset)\to \operad'(\finiteset)$ is $\subspacegroup-\subspacegroup^\finiteset$-equivariant.

Assume that $\subspacegroup$ and $\topmon$, and $\sspace(\finiteset)=\subspacegroup\backslash\operad(\finiteset)/\subspacegroup^\finiteset$ are cofibrant spaces, and likewise for $\topmon'$ and $\operad'$, and that $\subspacegroup\to\topmon$ and $\subspacegroup\to \topmon'$ are cofibrations of spaces.

Then the induced map of pushouts is a weak equivalence.
\end{prop}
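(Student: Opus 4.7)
The plan is to induct along the filtration $\col{\finiteset}{\itk}{\sspace,\subspacegroup,\spacea}$ that defines $\colalls{\sspace,\subspacegroup,\spacea}$, exploiting the fact that each stage is a pushout along a cofibration. Componentwise over a tree $\tree$, the inclusion $\trees{\finiteset}{\itk+1}^\subspacegroup(\sspace,\spacea)\hookrightarrow \trees{\finiteset}{\itk+1}(\sspace,\spacea)$ is the product of the cofibrant space $\prod_\vertex \sspace(\edgesof(\vertex))\times\prod_{\extedges(\tree)}\spacea$ with the cofibration $\cubecol{\intedges(\tree)}{\subspacegroup}{\spacea}\to\prod_{\intedges(\tree)}\spacea$ supplied by Lemma~\ref{lemma:cubecofibration}, hence itself a cofibration. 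So each $\col{\finiteset}{\itk+1}{\sspace,\subspacegroup,\spacea}$ is a genuine homotopy pushout over its predecessor, and the pushout gluing (left properness) lemma reduces the theorem to producing compatible weak equivalences on the three other corners of each defining square.

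Before the induction I would extract the weak equivalence $\sspace(\finiteset)\to\sspace'(\finiteset)$ from the hypothesized $\subspacegroup$-$\subspacegroup^\finiteset$-equivariant splittings: these identify $\operad(\finiteset)\to\operad'(\finiteset)$ with a product map between $\subspacegroup\times\sspace(\finiteset)\times\subspacegroup^\finiteset$ and $\subspacegroup\times\sspace'(\finiteset)\times\subspacegroup^\finiteset$ that is the identity on the two $\subspacegroup$-factors, and cofibrancy of all factors lets one conclude that the middle factor map is itself a weak equivalence. With this in hand, for each tree $\tree$ the summand $\prod_\vertex\sspace(\edgesof(\vertex))\times\prod_{\edgesof(\tree)}\topmon$ maps to its primed analogue by a finite product of weak equivalences between cofibrant spaces, hence by a weak equivalence, and so $\trees{\finiteset}{\itk}(\sspace,\topmon)\to\trees{\finiteset}{\itk}(\sspace',\topmon')$ is a weak equivalence on each summand.

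The main obstacle, and the real content of the argument, is the analogous statement for the subspace $\trees{\finiteset}{\itk}^\subspacegroup$, which reduces factorwise to a companion of Lemma~\ref{lemma:cubecofibration}: a natural weak equivalence $\functorb\to\functorb'$ of cofibrant diagrams receiving a natural cofibration from a common $\functora$ induces a weak equivalence on cube colimits $\cubecol{\finiteset}{\functora}{\functorb}\to\cubecol{\finiteset}{\functora}{\functorb'}$. I would prove this by induction on $|\finiteset|$ mirroring the proof of Lemma~\ref{lemma:cubecofibration}: the pushout presentation used there displays $\cubecol{\finiteset}{\functora}{\functorb}$ as the pushout
\[
\functorb(\element)\times\cubecol{\finiteset'}{\functora}{\functorb}\;\longleftarrow\;\functora(\element)\times\cubecol{\finiteset'}{\functora}{\functorb}\;\longrightarrow\;\functora(\element)\times\prod_{\element'\in\finiteset'}\functorb(\element'),
\]
where the rightward arrow is the product of a cofibrant space with the Lemma~\ref{lemma:cubecofibration} cofibration, hence itself a cofibration. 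The three corner maps between the $\functorb$ and $\functorb'$ versions are weak equivalences by the inductive hypothesis and by products of weak equivalences between cofibrant spaces, so pushout gluing concludes.

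With these ingredients in place, the induction on $\itk$ is immediate: the base case $\col{\finiteset}{0}{\cdots}=\trees{\finiteset}{1}(\sspace,\topmon)$ is handled by the tree-product argument, and at the inductive step the defining pushout square has its $\trees{\finiteset}{\itk+1}^\subspacegroup$-edge a cofibration in both the unprimed and primed versions, with all three other corner maps weak equivalences (two by tree products, one by the inductive hypothesis), so pushout gluing produces the weak equivalence at stage $\itk+1$. The sequential colimit $\colall{\finiteset}{\sspace,\subspacegroup,\topmon}$ stabilizes at finite $\itk$ determined by $|\finiteset|$, so no transfinite care is required.
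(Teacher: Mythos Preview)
Your argument is essentially the paper's: induct along the $\Omega_\itk$ filtration, use Lemma~\ref{lemma:cubecofibration} (and Corollary~\ref{cor:prodcofibration}) to see that each stage is a homotopy pushout, and verify weak equivalences on the tree spaces and their $\subspacegroup$-subspaces (the paper packages the cube-colimit weak equivalence via the Reedy structure of Proposition~\ref{prop:reedyprop} rather than your explicit induction on $|\finiteset|$, but the content is the same). One small correction: the equivariant map $\operad(\finiteset)\to\operad'(\finiteset)$ need not literally be a product map that is the identity on the $\subspacegroup$-factors---an equivariant map between trivialized free $\subspacegroup\times\subspacegroup^\finiteset$-spaces need not respect the chosen sections---so the paper instead deduces that $\sspace(\finiteset)\to\sspace'(\finiteset)$ is a weak equivalence from the long exact sequence of homotopy groups and the five lemma.
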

\begin{proof}
The long exact sequence of homotopy groups and the five lemma imply a weak equivalence from $\sspace(\finiteset)\to\sspace'(\finiteset)$.  Since products preserve weak equivalences of spaces, we have a weak equivalence $\col{\finiteset}{0}{\sspace,\subspacegroup,\topmon}\to\col{\finiteset}{0}{\sspace',\subspacegroup,\topmon'}$ for $|\finiteset|>1$.  Proposition~\ref{prop:reedyprop} and the fact that coproducts also preserve weak equivalences of cofibrant spaces imply that we have weak equivalences 
\[\trees{\finiteset}{\itk}^{\subspacegroup}(\operad,\topmon)\to \trees{\finiteset}{\itk}^{\subspacegroup}(\operad',\topmon')\] and 
\[\trees{\finiteset}{\itk}(\operad,\topmon)\to \trees{\finiteset}{\itk}(\operad',\topmon').\] 

The spaces $\trees{\finiteset}{\itk}^{\subspacegroup}(\operad,\topmon)$ and $\trees{\finiteset}{\itk}^{\subspacegroup}(\operad',\topmon')$ are cofibrant and the maps to $\trees{\finiteset}{\itk}(\operad,\topmon)$ and $\trees{\finiteset}{\itk}(\operad',\topmon')$ are cofibrations by Corollary~\ref{cor:prodcofibration} and Lemma~\ref{lemma:cubecofibration}.  Then by induction, if 
\[\col{\finiteset}{\itk}{\sspace,{\subspacegroup},\topmon}\to \col{\finiteset}{\itk}{\sspace',{\subspacegroup},\topmon'}\]
is a weak equivalence of cofibrant spaces, so is 
\[\col{\finiteset}{\itk+1}{\sspace,{\subspacegroup},\topmon}\to \col{\finiteset}{\itk+1}{\sspace',{\subspacegroup},\topmon'},\]
using the nearly direct category structure of the pushout diagram (see Proposition~\ref{prop:reedyprop}). 

Then, by the same proposition, $\col{\finiteset}{\bullet}{\sspace,\subspacegroup,\topmon}\to \col{\finiteset}{\bullet}{\sspace',\subspacegroup,\topmon'}$ is a weak equivalence of cofibrant telescopes so the weak equivalence passes to the colimit.
\end{proof}
\begin{thm}\label{thm:Spitzweck}
If $\firstop\gets\secondop\to\thirdop$ is a diagram of operads which are cofibrant as pointed collections, $\secondop\to \firstop$ is a cofibration of operads, and $\secondop\to \thirdop$ is a weak equivalence, then the induced map $\firstop\to \firstop\sqcup_\secondop \thirdop$ is a weak equivalence of operads and $\thirdop\to \firstop\sqcup_\secondop \thirdop$ is a cofibration of pointed collections.
\end{thm}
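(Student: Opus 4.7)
The plan is to reduce the general statement to a transfinite induction over free cell attachments of operads, and then analyze each such attachment through its standard filtration by tree complexity, propagating the weak equivalence stage by stage.

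First, I would argue that it suffices to treat the case in which $\secondop\to\firstop$ is a single free extension, i.e., a pushout of the form $\firstop = \secondop \sqcup_{F(A)} F(B)$ where $F$ is the free operad functor on pointed collections and $A \to B$ is a generating cofibration of pointed collections. Any cofibration of operads is a retract of a transfinite composition of such free cell attachments, and both the conclusion ``weak equivalence of operads'' (detected arity-wise on homotopy groups) and the conclusion ``cofibration of pointed collections'' (formal from the lifting property) are closed under transfinite composition and retracts. So stability of the two statements under colimits gives the reduction.

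Second, for a single free cell attachment, there is a canonical identification
\[\firstop \sqcup_\secondop \thirdop \;\cong\; \thirdop \sqcup_{F(A)} F(B),\]
and the right-hand side carries a natural increasing filtration $\thirdop = \thirdop_0 \to \thirdop_1 \to \cdots$, where $\thirdop_n$ is obtained from $\thirdop_{n-1}$ by attaching the trees with exactly $n$ vertices labeled by ``new'' $B$-cells and the remaining vertices labeled by elements of $\thirdop$. Each step $\thirdop_{n-1} \to \thirdop_n$ is a pushout of pointed collections along a map built from $\cubecol{\finiteset}{A}{B} \to B^{\finiteset}$ (with $\finiteset$ indexing the $n$ $B$-vertices), tensored with an appropriate product of $\thirdop(\insof(\vertex))$'s at the $\thirdop$-vertices, and then quotiented by the symmetric group action that permutes the $B$-vertices and their decorations. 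By Lemma~\ref{lemma:cubecofibration} the cube map is a cofibration of pointed collections; combined with Proposition~\ref{prop:modelcatfact} this shows each filtration step is a cofibration of pointed collections. Taking the colimit yields conclusion (b).

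Third, I would prove the weak equivalence by running the same filtration on $\firstop = \secondop \sqcup_{F(A)} F(B)$ (with $\secondop$-vertices playing the role of $\thirdop$-vertices) and comparing stage by stage. At stage $0$ the comparison is $\secondop \to \thirdop$, a weak equivalence by hypothesis. At each subsequent stage, the new piece attached differs only in the products $\prod \secondop(\insof(\vertex)) \rightsquigarrow \prod \thirdop(\insof(\vertex))$ at the ``outer'' vertices, which is a weak equivalence of cofibrant spaces by the pointed cofibrancy hypothesis on $\secondop$ and $\thirdop$; a Reedy/gluing argument of the sort used in Proposition~\ref{prop:onetotwo}, together with the fact that each filtration attaching map is a cofibration between cofibrant objects, propagates the weak equivalence through each pushout. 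Passing to the colimit of the filtration gives part (a).

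The main obstacle is the symmetric group action that permutes the $n$ freshly attached $B$-vertices and their decorations, which in general is not free. A naive application of Lemma~\ref{lemma:cubecofibration} only gives a cofibration \emph{before} quotienting by this $\sS_\finiteset$-action, and without care the orbit projection might fail to remain a Serre cofibration. This is precisely where the assumption that $\secondop$, $\firstop$, and $\thirdop$ are cofibrant \emph{as pointed collections} — i.e., cellular in a way compatible with the $\sS_\finiteset$-actions — is used: it ensures that attaching cells and passing to $\sS_\finiteset$-orbits commute with the model-categorical machinery, so that each filtration step really is a cofibration of pointed collections and that weak equivalences of the cofibrant pieces descend to weak equivalences of the quotients. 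Careful bookkeeping of these equivariant cell structures through the induction is the technical heart of the argument.
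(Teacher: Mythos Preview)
The paper does not prove this theorem at all: immediately after the statement it writes ``This is parts of Theorem~3.2 and Proposition~3.6 of~\cite{Spitzweck:OAMMCM},'' and uses the result as a black box. So there is no ``paper's own proof'' to compare against.

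That said, your sketch is essentially the standard argument, and is close in spirit to what Spitzweck actually does. The reduction to a single free cell attachment via the cellular description of operad cofibrations, the filtration of the pushout by the number of new $B$-vertices in a tree, and the stage-by-stage gluing comparison are all correct in outline. You have also correctly located the genuine difficulty: the $\sS_n$-action permuting the new $B$-vertices is not free on the attaching space, and Lemma~\ref{lemma:cubecofibration} alone only controls things before passing to orbits. One clarification worth making explicit: it is not the pointed-collection cofibrancy of $\firstop$ that resolves this, but rather that the generating cofibrations $A\to B$ for the operad model structure are of the form $\sS_n\cdot i$ for $i$ a generating cofibration of spaces, so the $\sS_n$-action on the $n$ copies of $B$ in the tree is free by construction; the pointed-collection cofibrancy of $\secondop$ and $\thirdop$ is what ensures the \emph{other} tree vertices carry cofibrant decorations so that the gluing lemma applies. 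With that adjustment your outline is sound, though filling in the equivariant bookkeeping carefully is nontrivial and is exactly why the paper defers to Spitzweck rather than reproducing the argument.
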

This is parts of Theorem~3.2 and Proposition~3.6 of~\cite{Spitzweck:OAMMCM}.

The following construction is classical.
\begin{thm}[\cite{BoardmanVogt:HIASTS,BergerMoerdijk:BVROMMC}]\label{thm:bm}
There exists a functor $\wfunctor$ on operads $\operad$ which are cofibrant as pointed collections so that $\wfunctor\operad$ is cofibrant as both an operad and a pointed collection, and a natural weak equivalence $\wfunctor\to \id$.  The functor $\wfunctor$ takes cofibrations of pointed collections to maps that are cofibrations of operads. 
\end{thm}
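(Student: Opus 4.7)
The plan is to construct $\wfunctor\operad$ explicitly by the classical Boardman--Vogt recipe, then verify the listed properties via the standard pushout-product filtration. For a pointed collection $\operad$ and a finite set $\finiteset$, I would define
\[
\wfunctor\operad(\finiteset) = \left(\coprod_{\tree\in\treesone{\finiteset}}\prod_{\vertex\in\verticesof(\tree)}\operad(\insof(\vertex))\times[0,1]^{\edgesof^\inter(\tree)}\right)\Big/\sim,
\]
where $\sim$ imposes (a) contraction of any length-zero internal edge by operadic composition of its two adjacent vertex labels in $\operad$, and (b) absorption of any vertex decorated by the operadic unit, transferring the length between the two incident edges. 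Operadic composition in $\wfunctor\operad$ is grafting of trees with the new internal edge assigned length $1$, the symmetric group acts by relabeling leaves, and the augmentation $\wfunctor\operad\to\operad$ sends all edge lengths to $0$ and composes along the underlying tree.

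First I would check that the relations are compatible with grafting and symmetric group action so that $\wfunctor$ is well-defined as a functor from pointed collections to operads and the augmentation is a natural map of operads. Second, to produce the natural weak equivalence $\wfunctor\operad\to\operad$, I would exhibit the space-level deformation $(t_\edge)\mapsto((1-s)t_\edge)$ on edge lengths; this contracts $\wfunctor\operad(\finiteset)$ onto the subspace with all internal edges of length zero, which by relation (a) maps bijectively and continuously onto $\operad(\finiteset)$. A routine check using the cofibrancy of $\operad$ as a pointed collection shows this map is a weak homotopy equivalence on each arity.

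Third, for cofibrancy of $\wfunctor\operad$ as both an operad and a pointed collection, I would filter $\wfunctor\operad$ by the number of internal edges in the underlying tree. Each filtration step is a pushout along a map built as a Leibniz pushout-product of the boundary inclusions $\{0,1\}\injects[0,1]$ (one per internal edge) with the unit cofibrations of the pointed collection $\operad$ (one per vertex). The analog of Lemma~\ref{lemma:cubecofibration}, adapted to trees rather than to $\finiteset$-cubes, ensures each step is a cofibration of operads when $\operad$ is cofibrant as a pointed collection. The same filtration applied arity-wise shows $\wfunctor\operad$ is cofibrant as a pointed collection. Finally, if $\operad\to\operad'$ is a cofibration of pointed collections, then the induced map on the attaching diagrams is a pushout-product of cofibrations, hence a cofibration of operads on each filtration step, and passes to the colimit to give the last assertion.

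The main obstacle is the careful bookkeeping in the pushout-product step: identifying precisely which boundary cells are being glued in at each filtration stage, and verifying that the resulting map is a cofibration of operads (not merely of pointed collections). This is the technical core of~\cite{BergerMoerdijk:BVROMMC}, and rather than reproducing it I would simply cite their construction and Theorems therein at the decisive points; the role of the above sketch is to explain why the cited black box yields exactly the three properties claimed.
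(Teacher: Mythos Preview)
The paper does not prove this theorem at all: it is stated with attribution to \cite{BoardmanVogt:HIASTS,BergerMoerdijk:BVROMMC} and used as a black box, with no argument given. Your proposal therefore goes well beyond what the paper does---you sketch the actual Boardman--Vogt $W$-construction and its filtration, whereas the paper simply invokes the cited references.

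Your sketch is broadly faithful to the Berger--Moerdijk treatment and would serve as a reasonable expository gloss on the citation. A couple of small points to tighten if you flesh it out: the index set should be all rooted $\finiteset$-trees rather than $\treesone{\finiteset}$ (which in this paper denotes \emph{stable} trees with no bivalent vertices---the $W$-construction needs bivalent vertices to accommodate arity-one operations), and the unit-absorption relation should specify the edge-length rule (typically the maximum of the two adjacent lengths). Since you explicitly plan to cite \cite{BergerMoerdijk:BVROMMC} for the pushout-product cofibrancy step, which is indeed the technical heart, your overall strategy is sound and matches what the cited sources actually do.
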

\begin{lemma}
There exists an operad $\einf$ satisfying the following properties:
  \begin{enumerate}
  \item $\einf(\finiteset)$ is a point if $|\finiteset|=1$
  \item $\einf(\finiteset)$ is contractible
  \item $\einf(\finiteset)$ is cofibrant as a pointed collection.
\end{enumerate}
\end{lemma}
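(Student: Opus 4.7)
The plan is to construct $\einf$ by applying the Boardman--Vogt/Berger--Moerdijk functor $\wfunctor$ of Theorem~\ref{thm:bm} to the commutative operad $\mathrm{Com}$, where $\mathrm{Com}(\finiteset) = \ast$ for every finite set $\finiteset$. The three properties will then follow from general properties of $\wfunctor$ together with an inspection of arity $1$.

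First I would verify that $\mathrm{Com}$ is cofibrant as a pointed collection, which feeds it into Theorem~\ref{thm:bm}. Since each $\mathrm{Com}(\finiteset)$ is a single point (hence a cofibrant $\Sigma_\finiteset$-space with respect to the projective model structure singled out in the definition of cofibration of pointed collections), and the point in arity one coincides with the unit, the inclusion $\unitcollection \hookrightarrow \mathrm{Com}$ has the left lifting property against morphisms of pointed collections that are Serre fibrations on each arity. Set $\einf := \wfunctor \mathrm{Com}$.

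Properties (2) and (3) are then automatic from Theorem~\ref{thm:bm}: that theorem asserts $\einf$ is cofibrant as a pointed collection, giving (3), and it provides a natural weak equivalence $\einf(\finiteset) \to \mathrm{Com}(\finiteset) = \ast$, giving contractibility and hence (2). For (1) one must inspect the $\wfunctor$ construction in arity one: elements are represented by linear trees with vertices labeled by $\mathrm{Com}(1) = \ast$ and internal edges labeled by lengths in $[0,1]$, subject to the identifications contracting length-zero edges and absorbing identity vertices. Since $\mathrm{Com}(1)$ is a single point coinciding with the unit, every such linear tree is identified with the trivial one-leaf tree, forcing $\einf(1) = \ast$.

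The main subtlety is property (1), which is really a bookkeeping check rather than a genuine obstacle. Should the reader prefer to avoid unpacking $\wfunctor$ in arity one, an alternative witness to the lemma is the Barratt--Eccles operad $\finiteset \mapsto E\Sigma_\finiteset$ (the geometric realization of the translation category of $\Sigma_\finiteset$): each $E\Sigma_\finiteset$ is a contractible CW complex with free $\Sigma_\finiteset$-action, the space $E\Sigma_1$ is manifestly a point, and cofibrancy as a pointed collection follows from the CW structure together with the fact that the unit lives in arity one.
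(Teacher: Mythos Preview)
Your main approach has a genuine gap: $\mathrm{Com}$ is \emph{not} cofibrant as a pointed collection, so Theorem~\ref{thm:bm} does not apply to it. The claim that ``each $\mathrm{Com}(\finiteset)$ is a single point (hence a cofibrant $\Sigma_\finiteset$-space with respect to the projective model structure)'' is false for $|\finiteset|>1$. In the projective model structure on $\Sigma_\finiteset$-spaces, cofibrant objects are retracts of free $\Sigma_\finiteset$-cell complexes, and in particular carry a free $\Sigma_\finiteset$-action; a single point with the trivial action does not. Concretely, the map $E\Sigma_\finiteset\to\ast$ is a $\Sigma_\finiteset$-equivariant acyclic Serre fibration, but it admits no equivariant section because $E\Sigma_\finiteset$ has no fixed points; this witnesses the failure of the required left lifting property for $\emptyset\to\ast=\mathrm{Com}(\finiteset)$. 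Even if one applies the $\wfunctor$-construction formally, $\wfunctor\mathrm{Com}(\finiteset)$ still contains a $\Sigma_\finiteset$-fixed point (the corolla with all edge lengths zero), so it too fails to be cofibrant as a pointed collection, and property~(3) cannot be obtained this way.

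Your alternative, the Barratt--Eccles operad $\finiteset\mapsto E\Sigma_\finiteset$, is correct and is precisely what the paper uses: it describes $\einf(\finiteset)$ as (a functorial version of) the total space of the universal bundle over $B\Sigma_\finiteset$, noting that there is a standard choice making arity one a point. So your fallback paragraph is the actual proof; the $\wfunctor\mathrm{Com}$ route should be discarded.
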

\begin{proof}
There is such an operad where $\einf(\finiteset)$ is (some functorial version of) the total space of the universal bundle over the classifying space of $\sS_\finiteset$, with structure maps induced by the concomitant maps among the various $\sS_\finiteset$.  There is a standard choice satisfying the first condition.
\end{proof}
\begin{lemma}\label{lemma:einf}
Let $\operad$ be an operad so that the unit map is a cofibration of spaces.  Consider the operad $\operad\times \einf$ with $(\operad\times\einf)(\finiteset)=\operad(\finiteset)\times\einf(\finiteset)$, using the product of the structure maps of $\operad$ and $\einf$.  Then the projection map $\operad\times\einf\to\operad$ is a weak equivalence and $\operad\times\einf$ is cofibrant as a pointed collection.
\end{lemma}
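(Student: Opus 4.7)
The plan is to handle the two claims of the lemma separately.

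For the weak equivalence, I would exploit contractibility of each $\einf(\finiteset)$. The projection $\operad \times \einf \to \operad$ is an operad morphism by construction, since $\operad \times \einf$ is defined as the levelwise product equipped with the product of structure maps. Levelwise, the projection
\[\operad(\finiteset) \times \einf(\finiteset) \to \operad(\finiteset)\]
is a homotopy equivalence of spaces: any choice of basepoint in the contractible space $\einf(\finiteset)$ provides a section $x \mapsto (x, \ast_\finiteset)$, and the composition with the projection is homotopic to the identity through a contraction of $\einf(\finiteset)$. By Theorem~\ref{thm:modeloperads}, weak equivalences of operads are characterized levelwise, so the projection is a weak equivalence of operads.

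For cofibrancy as a pointed collection, I would verify the left lifting property of the morphism $\unitcollection \to \operad \times \einf$ against Serre fibrations of pointed collections, level by level. At the unique level where the pointed structure is nontrivial, $(\operad \times \einf)(\{\ast\}) = \operad(1) \times \einf(1) = \operad(1)$ since $\einf(1)$ is a point, and the map from $\unitcollection(\{\ast\}) = \mathrm{pt}$ is precisely the unit of $\operad$, which is a cofibration of spaces by hypothesis. This gives well-pointedness at level one. At levels $|\finiteset| > 1$, there is no pointed constraint, so the lifting property reduces to $(\operad \times \einf)(\finiteset) = \operad(\finiteset) \times \einf(\finiteset)$ being cofibrant in the appropriate sense, which I would derive from the fact that $\einf$ is cofibrant as a pointed collection (so each $\einf(\finiteset)$ is cofibrant) together with the explicit construction of $\einf(\finiteset)$ with its free $\sS_\finiteset$-action.

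The main delicacy lies in the higher-level cofibrancy argument, since the hypothesis on $\operad$ controls only the unit map and not the spaces $\operad(\finiteset)$ for $|\finiteset| > 1$. The key point is that cofibrancy of the product $\operad(\finiteset) \times \einf(\finiteset)$ should follow from the fact that $\einf(\finiteset)$ is a free $\sS_\finiteset$-space modelled on the universal bundle, whose equivariant cellular structure is robust enough to combine with arbitrary spaces on the other factor. Beyond this, the proof is a direct verification combining a product-with-a-contractible-space argument with the characterization of weak equivalences of operads via Theorem~\ref{thm:modeloperads}.
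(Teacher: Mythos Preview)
Your outline matches the paper's proof in structure: weak equivalence from contractibility of $\einf(\finiteset)$, and cofibrancy checked levelwise with the case $|\finiteset|=1$ singled out and handled via the unit hypothesis together with $\einf(1)=\ast$.

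The difference is exactly at the point you flag as delicate. You say the product $\operad(\finiteset)\times\einf(\finiteset)$ ``should'' be cofibrant because the free $\sS_\finiteset$-cell structure on $\einf(\finiteset)$ is ``robust enough to combine with arbitrary spaces,'' but you do not supply the mechanism. The paper makes this precise with a one-line adjunction trick: $\sS_\finiteset$-spaces form a Cartesian closed category, so a lifting problem for $\operad(\finiteset)\times\einf(\finiteset)$ against an $\sS_\finiteset$-equivariant fibration adjoints to a lifting problem for $\einf(\finiteset)$ alone against the induced map of (non-equivariant) mapping spaces out of $\operad(\finiteset)$, equipped with the conjugation $\sS_\finiteset$-action. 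That induced map is again an equivariant fibration, and now the assumed cofibrancy of $\einf$ as a pointed collection provides the lift directly. This is the concrete content behind your phrase ``robust enough,'' and it is the only real idea in the proof beyond bookkeeping; without it the higher-arity case remains an assertion rather than an argument.
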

\begin{proof}
The weak equivalence is obvious.  The (not necessarily equivariant) morphism space from $\operad(\finiteset)$ into a fibration with an action of $\sS_\finiteset$ form a fibration of spaces with $\sS_\finiteset$ action (by conjugation).  $\einf(\finiteset)$ has the left lifting property with respect to this fibration.  Because $\sS_\finiteset$-spaces form a Cartesian closed category, we get the desired lifting property on the product operad.  There is a special argument when $|\finiteset|=1$.
\end{proof}

\begin{prop}\label{prop:firstpushout}
Let $\firstop\gets\secondop\to\thirdop$ be a diagram of operads, so that all objects are cofibrant pointed collections and all maps are cofibrations of pointed collections. There is an operad $\replaced{\thirdop}$ equipped with maps $\secondop\to \replaced{\thirdop}\to \thirdop$ which factor $\secondop\to \thirdop$ into a cofibration of pointed collections followed by a weak equivalence and so that $\firstop\gets \secondop\to \replaced{\thirdop}$ is a homotopically correct model of $\firstop\gets\secondop\to\thirdop$.
\end{prop}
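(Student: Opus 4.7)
The plan is to construct $\replaced{\thirdop}$ via a pushout of operads built using the Boardman--Vogt--Berger--Moerdijk functor $\wfunctor$ of Theorem~\ref{thm:bm}, which is the standard tool for converting pointed-collection cofibrations into operad cofibrations. Concretely, let $f:\secondop\to\thirdop$ be the hypothesized cofibration of pointed collections and define $\replaced{\thirdop}$ to be the pushout in operads
\[
\xymatrix{
\wfunctor\secondop \ar[r]^{\wfunctor f} \ar[d]_{\sim} & \wfunctor\thirdop \ar[d] \\
\secondop \ar[r] & \replaced{\thirdop},
}
\]
where the left vertical map is the natural weak equivalence from Theorem~\ref{thm:bm}. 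Since $\wfunctor f$ is a cofibration of operads (applying Theorem~\ref{thm:bm} to the pointed-collection cofibration $f$), the pushed-out map $\secondop\to\replaced{\thirdop}$ is a cofibration of operads, and the pushout square furnishes a canonical operad map $\replaced{\thirdop}\to\thirdop$ whose composition with $\secondop\to\replaced{\thirdop}$ is $f$.

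To see $\replaced{\thirdop}\to\thirdop$ is a weak equivalence, I would invoke left properness of the Berger--Moerdijk model structure: the pushout of the weak equivalence $\wfunctor\secondop\to\secondop$ along the cofibration $\wfunctor f$ yields a weak equivalence $\wfunctor\thirdop\to\replaced{\thirdop}$, and two-out-of-three applied to the factorization $\wfunctor\thirdop\to\replaced{\thirdop}\to\thirdop$ of the natural weak equivalence $\wfunctor\thirdop\to\thirdop$ completes the argument.

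For the homotopical correctness of $\firstop\gets\secondop\to\replaced{\thirdop}$, left properness once more gives that the strict pushout $\firstop\sqcup_\secondop\replaced{\thirdop}$ computes the homotopy pushout of this diagram, because $\secondop\to\replaced{\thirdop}$ is a cofibration of operads; then invariance of homotopy pushouts under componentwise weak equivalences identifies this with the homotopy pushout of $\firstop\gets\secondop\to\thirdop$ using the weak equivalences established above.

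The main obstacle is upgrading $\secondop\to\replaced{\thirdop}$ from a cofibration of operads to a cofibration of \emph{pointed collections}, which is the strictly stronger property asked of the factorization. Operadic pushouts are not levelwise, so one cannot directly invoke pushout-closure of pointed-collection cofibrations. I would address this by unfolding the pushout using the tree presentation of Fact~\ref{fact:pushout} and the explicit cellular structure of $\wfunctor$, attaching pointed-collection cells of $\wfunctor\thirdop$ to $\secondop$ one at a time and checking at each stage that the attaching maps remain pointed-collection cofibrations; the fact that $\wfunctor\thirdop$ is itself cofibrant as a pointed collection (Theorem~\ref{thm:bm}) should be the key input that keeps this induction going.
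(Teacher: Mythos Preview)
Your construction of $\replaced{\thirdop}$ as the pushout of $\secondop\gets\wfunctor\secondop\to\wfunctor\thirdop$ is exactly the paper's, but your justification has a real gap: you repeatedly invoke left properness of the Berger--Moerdijk model structure, and that is not known to hold. The correct substitute is already on the page as Theorem~\ref{thm:Spitzweck} (Spitzweck), which is precisely a \emph{relative} left-properness statement valid when all operads involved are cofibrant as pointed collections. Since $\wfunctor\secondop$, $\wfunctor\thirdop$, and $\secondop$ are cofibrant pointed collections and $\wfunctor\secondop\to\wfunctor\thirdop$ is an operad cofibration, Spitzweck gives both that $\wfunctor\thirdop\to\replaced{\thirdop}$ is a weak equivalence (hence $\replaced{\thirdop}\to\thirdop$ is, by two-out-of-three) \emph{and} that $\secondop\to\replaced{\thirdop}$ is a cofibration of pointed collections. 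That second conclusion dissolves what you call the ``main obstacle'': no cellular unfolding via Fact~\ref{fact:pushout} is needed.

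For homotopical correctness you again lean on left properness, which is the same gap. The paper avoids it by a pushout-pasting argument: the pushout of $\firstop\gets\secondop\to\replaced{\thirdop}$ is canonically the pushout of $\firstop\gets\wfunctor\secondop\to\wfunctor\thirdop$, so it suffices to compare the latter to the genuinely cofibrant diagram $\wfunctor\firstop\gets\wfunctor\secondop\to\wfunctor\thirdop$. One more application of Spitzweck (pushing the weak equivalence $\wfunctor\firstop\to\firstop$ along the operad cofibration $\wfunctor\secondop\to\wfunctor\firstop$, using that $\firstop$ is a cofibrant pointed collection) shows the two pushouts are weakly equivalent. In short: every place you wrote ``left properness,'' replace it with an appeal to Theorem~\ref{thm:Spitzweck}, checking the pointed-collection cofibrancy hypotheses, and drop the proposed cellular argument entirely.
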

\begin{proof}
Let $\interoppushout$ be the pushout of the diagram $\secondop\gets \wfunctor\secondop\to \wfunctor\thirdop$.  This clearly comes with maps $\secondop\to \replaced{\thirdop}\to\thirdop$ which factor $\secondop\to\thirdop$.  By Theorem~\ref{thm:Spitzweck}, the map $\secondop\to\replaced{\thirdop}$ is a cofibration of pointed collections.  By Theorems~\ref{thm:Spitzweck}~and~\ref{thm:bm}, the map $\replaced{\thirdop}\to\thirdop$ is a weak equivalence.  

The pushout of $\firstop\gets\secondop\to\replaced{\thirdop}$ is canonically isomorphic to the pushout of $\firstop\gets \wfunctor\secondop\to\wfunctor\thirdop$, and it suffices to show that this latter diagram is a homotopically correct model.  

By Theorem~\ref{thm:bm}, $\wfunctor\firstop\gets \wfunctor\secondop\to \wfunctor\thirdop$ is a cofibrant model of the diagram, with realization $\oprealization$.  By Theorems~\ref{thm:Spitzweck}~and~\ref{thm:bm}, the induced map from $\oprealization$ to the pushout of the diagram $\firstop\gets \wfunctor\firstop\to\oprealization$ is a weak equivalence. But $\firstop\gets\wfunctor\firstop\to\oprealization$ is canonically isomorphic to $\firstop\gets \wfunctor\secondop\to\wfunctor\thirdop$, and the weak equivalence from $\oprealization$ to the pushout of this diagram arises from the map of diagrams from  $\wfunctor\firstop\gets \wfunctor\secondop\to \wfunctor\thirdop$ to  $\firstop\gets \wfunctor\secondop\to \wfunctor\thirdop$.
\end{proof}
Now we can combine Propositions~\ref{prop:onetotwo}~and~\ref{prop:firstpushout} to achieve the main theorem of this section.
\begin{thm}~\label{thm:abstracthpush}
Let $\operad$ and $\topmon$ be a pair as in Theorem~\ref{thm:pushoutshape}.  Assume that $\subspacegroup\to\topmon$ is a cofibration between cofibrant pointed spaces (rather than just a cofibration between cofibrant spaces) Then the diagram $\operad\gets\subspacegroup\to\topmon$ is a homotopically correct model for itself.
\end{thm}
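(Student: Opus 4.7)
The plan is to combine Proposition~\ref{prop:firstpushout} and Proposition~\ref{prop:onetotwo}, as suggested in the paragraph preceding the theorem. First, apply Proposition~\ref{prop:firstpushout} to the given diagram $\operad\gets\subspacegroup\to\topmon$, producing an intermediate model $\operad\gets\subspacegroup\to\replaced{\topmon}$ in which $\subspacegroup\to\replaced{\topmon}$ is a cofibration of pointed collections and $\replaced{\topmon}\to\topmon$ is a weak equivalence. By the conclusion of that proposition, this intermediate diagram is a homotopically correct model of the original, so its strict pushout in operads computes the homotopy pushout of $\operad\gets\subspacegroup\to\topmon$.

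The next step is to identify, up to weak equivalence, this strict pushout with the strict pushout of the original diagram. Both pushouts admit explicit descriptions via Theorem~\ref{thm:pushoutshape}, provided $\replaced{\topmon}$ is still a monoid containing $\subspacegroup$ as a submonoid. This holds because $\wfunctor$ preserves concentration in arity one: $\wfunctor\subspacegroup$ and $\wfunctor\topmon$ are monoids, and the operadic pushout $\subspacegroup\sqcup_{\wfunctor\subspacegroup}\wfunctor\topmon$ defining $\replaced{\topmon}$ remains concentrated in arity one. Hence $(\operad,\replaced{\topmon})$ satisfies the hypotheses of Theorem~\ref{thm:pushoutshape}, and its pushout is $\colalls{\sspace,\subspacegroup,\replaced{\topmon}}$. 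The pushout of the original diagram is $\colalls{\sspace,\subspacegroup,\topmon}$; the same $\sspace$ appears in each since $\operad$ is unchanged.

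Now apply Proposition~\ref{prop:onetotwo} to the comparison of diagrams given by the identity on $\operad$ and on $\subspacegroup$, together with the weak equivalence $\replaced{\topmon}\to\topmon$. The identity maps are equivariant weak equivalences; $\replaced{\topmon}\to\topmon$ is a weak equivalence of monoids by construction. The cofibration $\subspacegroup\to\topmon$ of pointed spaces is an explicit hypothesis of the theorem, and the cofibration $\subspacegroup\to\replaced{\topmon}$ of pointed collections, provided by Theorem~\ref{thm:Spitzweck}, restricts in arity one to a cofibration of pointed spaces. Cofibrancy of $\subspacegroup$ and $\topmon$ is assumed, cofibrancy of $\replaced{\topmon}$ follows from Theorem~\ref{thm:bm} and the fact that pushouts of cofibrant objects along cofibrations remain cofibrant, and cofibrancy of $\sspace(\finiteset)$ is the one ingredient that needs to be argued from the structure given by Theorem~\ref{thm:pushoutshape}, using the equivariant splitting $\operad(\finiteset)\cong\subspacegroup\times\sspace(\finiteset)\times\subspacegroup^\finiteset$ and the cofibrancy of $\subspacegroup$. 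The proposition then furnishes a weak equivalence $\colalls{\sspace,\subspacegroup,\replaced{\topmon}}\to\colalls{\sspace,\subspacegroup,\topmon}$. Composed with the identification of the source as a homotopy pushout, this exhibits $\colalls{\sspace,\subspacegroup,\topmon}$, which is the strict pushout of the original diagram, as itself a model of the homotopy pushout.

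The main obstacle I expect is the bookkeeping around cofibrancy and the check that $\wfunctor$ applied to monoids (and to the inclusion $\subspacegroup\hookrightarrow\topmon$ of monoids) retains the structure needed to keep $\replaced{\topmon}$ within the hypotheses of Theorem~\ref{thm:pushoutshape}. No essentially new idea is required, but one must track through the Boardman--Vogt construction carefully enough to see that arity-one operads stay arity one, and that the splitting assumption on the bimodule structure of $\operad$ is not disturbed by replacing $\topmon$ by $\replaced{\topmon}$.
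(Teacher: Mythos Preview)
Your overall strategy matches the paper's---combine Proposition~\ref{prop:firstpushout} with Proposition~\ref{prop:onetotwo}---but you skip a step that the paper needs and that your argument does not supply.

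Proposition~\ref{prop:firstpushout} requires \emph{all three} objects in the diagram to be cofibrant as pointed collections (and both maps to be cofibrations of pointed collections). For $\subspacegroup$ and $\topmon$ this is indeed automatic, since they are concentrated in arity one where the symmetric group is trivial; your hypothesis on pointed spaces suffices. But nothing in the hypotheses of Theorem~\ref{thm:pushoutshape} guarantees that $\operad$ is cofibrant as a pointed collection: the splitting $\operad(\finiteset)\cong\subspacegroup\times\sspace(\finiteset)\times\subspacegroup^{\finiteset}$ says nothing about the $\sS_\finiteset$-equivariant cofibrancy that ``cofibrant pointed collection'' demands. So you cannot invoke Proposition~\ref{prop:firstpushout} on $\operad\gets\subspacegroup\to\topmon$ directly.

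The paper's fix is to replace $\operad$ by $\operad\times\einf$ using Lemma~\ref{lemma:einf}, which forces cofibrancy as a pointed collection while changing nothing up to weak equivalence. One then applies Proposition~\ref{prop:firstpushout} to $\operad\times\einf\gets\subspacegroup\to\topmon$ to obtain the homotopically correct model $\operad\times\einf\gets\subspacegroup\to\replaced{\topmon}$, and finally uses Proposition~\ref{prop:onetotwo} to compare its pushout with that of the original diagram $\operad\gets\subspacegroup\to\topmon$. Note that $(\operad\times\einf)(\finiteset)\cong\subspacegroup\times(\sspace(\finiteset)\times\einf(\finiteset))\times\subspacegroup^{\finiteset}$, so the pair $(\operad\times\einf,\replaced{\topmon})$ still satisfies the hypotheses of Theorem~\ref{thm:pushoutshape}, and Proposition~\ref{prop:onetotwo} applies. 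Your worry about $\sspace(\finiteset)$ being cofibrant is legitimate but secondary; the missing $\einf$ is the actual gap.
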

\begin{proof}
By Lemma~\ref{lemma:einf} and Proposition~\ref{prop:firstpushout}, the diagram $\operad\times\einf\gets \subspacegroup\to \replaced{\topmon}$ is a homotopically correct model ($\subspacegroup$ and $\topmon$ are trivially cofibrant as pointed collections).  By Proposition~\ref{prop:onetotwo} the induced map of pushouts from its pushout to the pushout of $\operad\gets\subspacegroup\to\topmon$ is a weak equivalence.
\end{proof}
Proposition~\ref{prop:mbaroneishpush} now follows. The substitution of $\trivialoperad$ for $\tld$ makes no difference as there is a map $\tld\to \trivialoperad$ making all diagrams commute.
%
%
%
\section{Homotopy trivializing the circle in the framed little disks}\label{sec:triv}
In this section, we connect the two pushouts that we have constructed, proving Theorem~\ref{thm:toptheoremtwo}, that the map $\longproj:\mbarone\to\mbartwo\to\Mbar$ is a weak equivalence.
\begin{lemma}\label{lemma:folklore}[Folklore]
Let $\spacemap:\spacea\to \spaceb$ be a map and $\cover$ an open cover of $\spaceb$ which is closed under finite intersections.  If $\spacemap:\spacemap^{-1}(\Unei)\to \Unei$ is a weak equivalence for all $\Unei\in \cover$ then $\spacemap$ is a weak equivalence.
\end{lemma}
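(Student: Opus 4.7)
The plan is to reduce to finite subcovers and then induct on their size via Mayer--Vietoris. Weak equivalences of spaces are detected on $\pi_n$ for all $n$ and all basepoints, and every homotopy class or nullhomotopy is represented by a map from a compact space ($S^n$ or $S^n \times [0,1]$), whose image meets only finitely many elements of $\cover$. It therefore suffices to prove that for every finite $\finitesubset \subset \cover$, the restriction $\spacemap\colon \spacemap^{-1}(U_\finitesubset) \to U_\finitesubset$ is a weak equivalence, where $U_\finitesubset := \bigcup_{\Unei \in \finitesubset} \Unei$. Once this is established, surjectivity on $\pi_n$ reduces to lifting a given sphere through $\spacemap$ after factoring it through the union of a finite subcover together with any element of $\cover$ containing the basepoint; injectivity is analogous using $S^n \times [0,1]$ in place of $S^n$.

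For the finite statement I induct on $|\finitesubset|$. The base case is the hypothesis. For the inductive step, fix $\Unei_0 \in \finitesubset$ and set $\finitesubsettwo = \finitesubset \setminus \{\Unei_0\}$, so that $U_\finitesubset = U_\finitesubsettwo \cup \Unei_0$ and $U_\finitesubsettwo \cap \Unei_0 = \bigcup_{\Unei \in \finitesubsettwo}(\Unei \cap \Unei_0)$. Closure of $\cover$ under finite intersections ensures each $\Unei \cap \Unei_0 \in \cover$, giving a finite subcover of $U_\finitesubsettwo \cap \Unei_0$ of cardinality at most $|\finitesubsettwo|$. The inductive hypothesis applied to $\finitesubsettwo$, to $\{\Unei_0\}$, and to this intersection cover yields weak equivalences $\spacemap^{-1}(U_\finitesubsettwo) \to U_\finitesubsettwo$, $\spacemap^{-1}(\Unei_0) \to \Unei_0$, and $\spacemap^{-1}(U_\finitesubsettwo \cap \Unei_0) \to U_\finitesubsettwo \cap \Unei_0$. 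Provided both the downstairs open-cover square and its preimage upstairs are homotopy pushouts, the induced map on the remaining corner is a weak equivalence, closing the induction.

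The main technical input is the classical fact that for any space with open cover $\{A, B\}$, the natural map from the homotopy pushout of $A \leftarrow A \cap B \to B$ to $A \cup B$ is a weak equivalence. This is the step that is not purely formal, since open inclusions are not in general cofibrations in the Quillen model structure on $\mathrm{Top}$; the standard argument uses iterated barycentric subdivision to replace any singular simplex in $A \cup B$ by a subdivision each of whose sub-simplices lies entirely in $A$ or entirely in $B$. With this classical input in hand, the compactness reduction and the Mayer--Vietoris induction assemble without further obstacle.
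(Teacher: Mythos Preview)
The paper does not actually prove this lemma; it simply records the statement as folklore and cites May~\cite{May:WEQ} for a proof. So there is no ``paper's own proof'' to compare against beyond that citation.

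Your sketch is a correct outline of one standard route to the result. Two small remarks. First, the phrase ``whose image meets only finitely many elements of $\cover$'' is not what you mean: a compact image can meet infinitely many members of $\cover$, but is \emph{covered} by finitely many, which is what your reduction actually uses. Second, in the inductive step you invoke the gluing lemma (three-of-four vertical weak equivalences in a cube of homotopy pushouts force the fourth); this is standard but worth naming, since it is exactly where the homotopy-pushout input you highlight gets used. With those clarifications, the compactness reduction plus Mayer--Vietoris induction is a complete argument, and is in the same spirit as the proof May gives.
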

There is a proof of this lemma in~\cite{May:WEQ}.
\begin{defi}
Let $\spacea$ and $\spaceb$ be spaces.  A {\em deformation retraction over $\spaceb$} is the data of a deformation retraction with projection map $\retract$ from $\spacea$ onto $\spaceb$ with specified section and homotopy $\homotopy$ so that the following diagram commutes:
  \[
\xymatrix{\interval\times \spacea\ar[r]^\homotopy\ar[d]_{\proj_2}&\spacea\ar[d]^\retract\\\spacea\ar[r]_\retract&\spaceb}
\]
\end{defi}
\begin{lemma}~\label{lemma:DRoY}
Let $\spacemap:\spacea\to \spaceb$ be the retraction map of a deformation retraction over $\spaceb$. Then for any subset $\unei$ of $\spaceb$, the restriction of $\spacemap$ to $\spacemap^{-1}(\unei)$ is a weak equivalence.
\end{lemma}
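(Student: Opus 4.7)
The plan is to use the fiber-preserving nature of $\homotopy$ (which is exactly what the commutativity of the given square says) to restrict the deformation retraction to the preimage of any subset.

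First I would unpack the defining diagram: commutativity of
\[
\xymatrix{\interval\times \spacea\ar[r]^\homotopy\ar[d]_{\proj_2}&\spacea\ar[d]^\retract\\\spacea\ar[r]_\retract&\spaceb}
\]
says precisely that $\retract(\homotopy(t,x))=\retract(x)$ for every $t\in \interval$ and every $x\in \spacea$. In particular, each $\homotopy_t$ preserves the fibers of $\retract$, so for any $\unei\subseteq\spaceb$ the preimage $\spacemap^{-1}(\unei)$ is invariant under the whole homotopy.

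Next I would restrict everything to $\unei$. Let $s:\spaceb\to\spacea$ be the section. Since $\retract\circ s=\id_\spaceb$, we have $s(\unei)\subseteq \spacemap^{-1}(\unei)$, so $s$ restricts to a continuous map $s|_\unei:\unei\to\spacemap^{-1}(\unei)$. By the previous paragraph, $\homotopy$ restricts to a continuous homotopy $\interval\times \spacemap^{-1}(\unei)\to\spacemap^{-1}(\unei)$ between $\id_{\spacemap^{-1}(\unei)}$ and $s\circ\retract|_{\spacemap^{-1}(\unei)}=(s|_\unei)\circ(\retract|_{\spacemap^{-1}(\unei)})$.

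Finally, combining this with $\retract|_{\spacemap^{-1}(\unei)}\circ s|_\unei=\id_\unei$ shows $s|_\unei$ is a homotopy inverse to $\retract|_{\spacemap^{-1}(\unei)}$. Hence $\retract$ restricted to $\spacemap^{-1}(\unei)$ is a homotopy equivalence, and in particular a weak equivalence. There is essentially no obstacle here: the whole content is packaged into the commutativity of the square, which is exactly what makes the restriction of the deformation retraction data still be a deformation retraction.
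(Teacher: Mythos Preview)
Your proof is correct and is exactly the same approach as the paper's, which just says in one line that the homotopy of the deformation retraction restricts to $\spacemap^{-1}(\unei)$. You have simply unpacked this line by spelling out why the commuting square forces $\homotopy$ to preserve fibers of $\retract$, hence to restrict.
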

\begin{proof}
The homotopy of the deformation retraction restricts to $\spacemap^{-1}(\unei)$.
\end{proof}
\begin{defi}
Let $\psplone$ be a point in $\Mbar(\finiteset)$ with underlying stable tree $\tree$, and let $\finiteset'$ be a subset of $\finiteset$.  Let $\vertex$ be a vertex of $\tree$ below all the leaves in $\finiteset'$.  Choose an $\affc$ representative $\psplone_\vertex$ of the decoration of $\vertex$.  Each marked point in $\psplone_\vertex$ is indexed by some edge $\edge$.  The {\em center of mass} $\cog_{\vertex,\finiteset'}$ of $\finiteset'$ in $\psplone_\vertex$ is a point in $\cC$ defined to be the weighted average of the marked points of $\psplone_\vertex$, where the weight of the marking indexed by $\edge$ is the number of elements of $\finiteset'$ above $\edge$.  This is $\affc$-equivariant.
\end{defi}
\begin{defi}
A {\em stable subset} of the finite set $\finiteset$ is a proper subset with at least two elements.
\end{defi}
\begin{defi}
Let $\setofsubsets=\{\finiteset_\iti\}$ be a set of stable subsets of $\finiteset$.  The {\em $\setofsubsets$-stratum} of $\Mbar(\finiteset)$ consists of points whose underlying tree has an edge $\edge_i$ for every $i$ so that the set of leaves above $\edge_i$ is precisely $\finiteset_\iti$.  The {\em codimension} of a stratum is $|\setofsubsets|$. The {\em open $\setofsubsets$-stratum} is the complement of all strata of higher codimension in the $\setofsubsets$-stratum.
\end{defi}
\begin{defi}
Let $\setofsubsets=\{\finiteset_\iti\}$ be a set of stable subsets of $\finiteset$.  The open $\setofsubsets$-set in $\Mbar(\finiteset)$ consists of points so that for each edge $\edge$ of a point's underlying tree and each stable subset $\finiteset_\iti$, the set of leaves above $\edge$ either is disjoint from, is contained in, or contains $\finiteset_\iti$.

Let $\psplone$ be in the open $\setofsubsets$-set, with underlying tree $\tree$.  Suppose we have normalized the decorations $\psplone_\vertex$ on each vertex of $\tree$ with respect to $\affc$ in some way.  A {\em simultaneous division of $\psplone$} along $\setofsubsets$ is a set of radii $\radi_{\vertex,\iti}$ for each pair consisting of a vertex $\vertex$ of $\tree$ and an index of $\setofsubsets$ such that $\finiteset_\iti$ is a union of the leaves above a set of incoming edges $\{\edge_{\iti,\itj}\}$ of $\vertex$.  These radii should satisfy the following:
  \begin{enumerate}
  \item the circle in $\cC$ centered at $\cog_{\vertex,\finiteset_\iti}$ separates those marked points in $\insof(\vertex)$ corresponding to the edges $\edge_{\iti,\itj}$ from all other marked points and $\infty$, and
  \item for fixed $\vertex$, any two such circles are disjoint.
  \end{enumerate}
\end{defi}
\begin{figure}[hb!]
\begin{pspicture}(-5,0)(5,4.5)
\psline(0,0)(0,3)
\psline(0,1)(-1,2)
\psline(0,1)(1,2)
\psline(0,3)(-1,4)
\psline(0,3)(1,4)
\rput(0,3){$\bullet$}
\rput(0,1){$\bullet$}
\rput(-1,4.25){$2$}
\rput(1,4.25){$3$}
\rput(-1,2.25){$1$}
\rput(1,2.25){$4$}

\psframe*[linecolor=lightgray](-5.5,.25)(-1.5,4.25)
\psframe*[linecolor=lightgray](1.5,.25)(5.5,4.25)
\rput(-3.5,2.25){$\bullet$}
\rput(-3.5,2.55){$2$}
\rput(-2.5,2.25){$\bullet$}
\rput(-2.5,2.55){$3$}
\pscircle[linestyle=dotted](-3,2.25){1}
\rput(3.5,2.25){$\bullet$}
\rput(3.5,2.55){$1$}
\rput(4.5,2.25){$\bullet$}
\rput(4.5,2.55){$\{2,3\}$}
\rput(1.8,3.5){$\bullet$}
\rput(1.8,3.8){$4$}
\pscircle[linestyle=dotted](4.5,2.25){.7}
\pscircle[linestyle=dotted](4,2.25){1.4}
\psline{->}(-1.4,3)(-.1,3)
\psline{->}(1.4,1)(.1,1)

\end{pspicture}
\caption{A simultaneous division of a point in $\Mbar(\{1,2,3,4\})$ along $\{\{2,3\}, \{1,2,3\}\}$}
\end{figure}

\begin{defi}Let $\tree$ be a $\finiteset$-tree.  The $\tree$-neighborhood $\nei_\tree$ in $\Mbar(\finiteset)$ consists of points which
can be simultaneously divided along the stable subsets $\intedges(\tree)$, and do not belong to codimension one strata except possibly the $\{\edge\}$-strata, for $\edge\in \intedges(\tree)$.
\end{defi}
\begin{lemma}
The $\tree$-neighborhoods are open and cover $\Mbar$.
\end{lemma}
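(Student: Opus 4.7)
Proof plan:

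\medskip

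\noindent\textbf{Covering.} Given $\psplone \in \Mbar(\finiteset)$ with underlying stable tree $\tree_\psplone$, I claim $\psplone \in \nei_{\tree_\psplone}$. The stratum condition is tautological: the codimension one strata containing $\psplone$ are exactly the $\{\finiteset_\edge\}$-strata for $\edge \in \intedges(\tree_\psplone)$, since a point lies in a $\{\finiteset'\}$-stratum if and only if its tree has an edge with above-leaves $\finiteset'$. For the simultaneous division along $\intedges(\tree_\psplone)$, fix normalized $\affc$-representatives of each $\psplone_\vertex$. For each internal edge $\edge$ connecting $\vertex$ (below) to $\vertex'$ (above), the only vertices at which $\finiteset_\edge$ appears as a union of above-leaves of incoming edges are $\vertex$ and $\vertex'$. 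At $\vertex$, $\finiteset_\edge$ sits above the single edge $\edge$ so $\cog_{\vertex,\finiteset_\edge}$ is the marked point for $\edge$; pick a radius small enough that the resulting circle separates this marked point from the other marked points of $\psplone_\vertex$. At $\vertex'$, $\finiteset_\edge$ equals the set of all leaves above $\vertex'$; pick a radius large enough that the circle encloses every marked point of $\psplone_{\vertex'}$. Disjointness of circles at a common vertex arising from nested stable subsets is arranged by choosing inner radii small and outer radii large in the natural nesting order.

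\medskip

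\noindent\textbf{Openness of the stratum condition.} For a stable subset $\finiteset'$, the set of points of $\Mbar(\finiteset)$ whose underlying tree has an edge with above-leaves $\finiteset'$ (that is, the closure of the open $\{\finiteset'\}$-stratum) is closed, being cut out by requiring the corresponding marked points to coalesce onto a separate bubble. Since there are only finitely many stable subsets, the complement of the union of $\{\finiteset'\}$-strata for $\finiteset' \notin \intedges(\tree)$ is a finite intersection of open sets, hence open. This handles condition (2) of the definition.

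\medskip

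\noindent\textbf{Openness of the simultaneous division condition.} Given a division $\{\radi_{\vertex,\iti}\}$ of $\psplone$ along $\intedges(\tree)$ with associated circles $C_{\vertex,\iti}$, choose for each vertex $\vertex$ of $\tree_\psplone$ a bounded open set $\unei_\vertex \subset \cC/\affc$ whose closure misses the marked points of $\psplone_\vertex$ but contains a collar of every $C_{\vertex,\iti}$. The basis neighborhood $\neiz{\psplone}{\unei}{\vertex}$ consists of points $y$ with a contraction to $\psplone$ that is conformal on the preimage of each $\unei_\vertex$. For such $y$, the preimages of the $C_{\vertex,\iti}$ are simple closed curves in the realization of $y$ which separate the correct marked points and $\infty$. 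After intersecting with the neighborhood produced above and with enough of the finitely many stratum complements to rule out incompatible refinements of $\tree_\psplone$, the lifted curves can be replaced by honest circles centered at the relevant centers of mass of $y$: the centers of mass are continuous in $y$ (they are weighted averages in the normalized representative), so a small perturbation of the radii suffices to produce the required circles for $y$. These circles satisfy the separation and disjointness axioms of a simultaneous division along $\intedges(\tree)$.

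\medskip

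The main obstacle is this last step. The contraction map need not take circles to circles, and the underlying tree of $y$ may properly refine that of $\psplone$, so one must carefully match the lifted curves with the pairs $(\vertex_y,\iti)$ in $y$'s refined tree and verify that the center-of-mass circles in the normalized decorations of $y$ inherit the separation property. The argument is essentially a continuity-plus-compactness exercise once the correct neighborhood basis has been fixed, but making it precise requires tracking the dependence of normalized representatives on the varying configuration, which is best handled by shrinking $\unei$ so that every geometric condition (separation, disjointness, bounded distortion of centers of mass) holds uniformly on $\neiz{\psplone}{\unei}{\vertex}$.
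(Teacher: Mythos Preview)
Your approach is correct and follows the same two-part structure as the paper's proof, which reads in its entirety: ``Having a simultaneous division is an open condition, and the $\tree$-stratum is contained in the $\tree$-neighborhood.'' You have simply unpacked both assertions in considerably more detail than the paper deems necessary; in particular your analysis of which pairs $(\vertex,\iti)$ actually require a radius in the covering argument, and your explicit use of the basic neighborhoods $\neiz{\psplone}{\unei}{\vertex}$ for openness, are fleshing out what the paper treats as evident. The one point worth noting is that the paper does not even mention openness of the stratum condition (your second step), presumably regarding closedness of each codimension-one stratum as obvious.
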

\begin{proof}
Having a simultaneous division is an open condition, and the $\tree$-stratum is contained in the $\tree$-neighborhood.
\end{proof}
\begin{defi}
Let $\tree$ be a stable $\finiteset$-tree.
Consider 
 \[\prod_{\vertex\in \verticesof(\tree)}\tdrtwo(\edgesof(\vertex))\times \prod_{\intedges(\tree)}\tld\times \prod_{\extedges(\tree)}\{(0,0)\}\]
as a subspace of $\trees{\finiteset}{|\verticesof(\tree)|}(\tdrtwo,\tld)$.  Then it comes with a map to $\mbarone$, which is isomorphic to $\colalls{\tdrtwo,\thecircle,\tld}$ by Theorem~\ref{thm:pushoutshape}, and thus to $\Mbar$.  We will call this map $\longmap$.  
Let $\mbarthree_\tree$ be the preimage under $\longmap$ of the $\tree$-neighborhood, so that the following diagram commutes:
  \[
    \xymatrix{\mbarthree_\tree\ar[r]\ar@/^1.5pc/[rr]^\longmap&\mbarone\ar[r]^\longproj&\Mbar}\]
\end{defi}
We will use $\mbarthree_\tree$ as an intermediate space to show that $\longproj^{-1}(\nei_\tree)\to \nei_\tree$ is a weak equivalence.
\begin{lemma}\label{lemma:giveswe1}
$\longmap:\mbarthree_\tree\to\nei_\tree$ is a deformation retraction over $\nei_\tree$. %
\end{lemma}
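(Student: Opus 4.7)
The plan is to exhibit a continuous section $\secty:\nei_\tree\to \mbarthree_\tree$ of $\longmap$ together with a fiberwise homotopy $\homotopy:\interval\times \mbarthree_\tree\to \mbarthree_\tree$ from the identity to $\secty\circ\longmap$ that commutes with $\longmap$.

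For the section I would first make a canonical continuous simultaneous division of each $\psplone\in\nei_\tree$ along $\intedges(\tree)$. For every vertex $\vertex$ of $\tree$ and every relevant stable subset $\finiteset_\iti$, the requirements that the circle centered at $\cog_{\vertex,\finiteset_\iti}$ both separate the designated marked points from the rest and remain disjoint from the other chosen circles at $\vertex$ cut out a nonempty open interval $(\rmin,\rmax)$ of valid radii depending continuously on $\psplone$. Picking a continuous interior choice, for instance the geometric mean of $\rmin$ and $\rmax$, yields canonical radii. Then $\secty(\psplone)$ is obtained by cutting $\psplone$ along these canonical circles to produce a configuration at each vertex of $\tree$, normalizing each piece to the unique representative in $\tdrtwo(\edgesof(\vertex))$, and labeling each internal edge with the corresponding trivialized annulus in $\tld$. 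Reversing the cutting via $\comp$ shows $\longmap\circ\secty=\id_{\nei_\tree}$.

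For the homotopy, observe that any $\varc\in\mbarthree_\tree$ already encodes a simultaneous division of $\longmap(\varc)$: its radii $\{\radi_\edge(\varc)\}$ on the internal edges of $\tree$ are the division radii. Let $\{\radi_\edge^{\mathrm{can}}(\varc)\}$ denote the canonical radii picked out by $\secty(\longmap(\varc))$. Define $\homotopy(t,\varc)$ to be the element of $\mbarthree_\tree$ lifting $\longmap(\varc)$ whose internal-edge radii are $(1-t)\radi_\edge(\varc)+t\radi_\edge^{\mathrm{can}}(\varc)$, with the $\tdrtwo$-decorations at each vertex determined by the renormalization that these new radii force. Convexity of the interval of valid division radii keeps $\homotopy(t,\varc)$ inside $\mbarthree_\tree$; by construction $\homotopy(0,\cdot)=\id$, $\homotopy(1,\cdot)=\secty\circ\longmap$, and $\longmap\circ\homotopy(t,\cdot)=\longmap$, so the commutative square defining a deformation retraction over $\nei_\tree$ holds.

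The main obstacle will be to justify that the fiber of $\longmap$ over any point of $\nei_\tree$ is parameterized precisely by the space of simultaneous divisions along $\intedges(\tree)$, so that the interpolating lift in the definition of $\homotopy$ is unambiguous. This reduces to unpacking the pushout presentation $\mbarone\cong\colalls{\tdrtwo,\thecircle,\tld}$ from Theorem~\ref{thm:pushoutshape}: the only identifications imposed are those from the $\thecircle$-action on adjacent vertex decorations and internal-edge annuli, and the $\tdrtwo$-normalization at each vertex provides a canonical cross-section of that action, so once the internal-edge radii are fixed the $\tdrtwo$-decoration at each vertex is uniquely determined. Continuity of $\secty$ and $\homotopy$ then follows from continuity of $\rmin,\rmax$ in $\psplone$ and of the cutting-and-normalizing operations, both of which are routine exercises in planar geometry.
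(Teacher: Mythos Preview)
Your outline has a real gap in the description of the fibre of $\longmap$, and as a consequence the proposed homotopy does not work.

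An element of $\mbarthree_\tree$ is a tuple of $\tdrtwo$-decorations at vertices together with $\tld$-decorations on internal edges. In the composed picture over a point of the top stratum of $\nei_\tree$, this is a nested collection of pairs of circles in the plane, one outer and one inner circle per internal edge (this is the lemma in the middle of the paper's construction of the homotopy). Crucially, the \emph{centers} of these circles are free: the $\tld$ factor is $(\cent,\radi)$ with $|\cent|+|\radi|\le 1$, so the inner circle need not be concentric with the outer one, and neither need sit at a center of mass. Thus the fibre is not parameterised by radii of a simultaneous division, and your claimed identification ``fibre $=$ space of simultaneous divisions along $\intedges(\tree)$'' is false. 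A generic $\varc\in\mbarthree_\tree$ does not encode a simultaneous division at all in the sense defined in the paper (those are required to be centered at $\cog$).

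This breaks the homotopy as well. To interpolate from $\varc$ to $\secty(\longmap(\varc))$ you must move circles with different centers, not just different radii. Your convexity claim fails here: given disks $\diskone_0\subset\disktwo_0$ and $\diskone_1\subset\disktwo_1$, the straight-line interpolation $(1-t)\diskone_0+t\diskone_1$ need not be contained in $(1-t)\disktwo_0+t\disktwo_1$ when the centers differ, and likewise disjointness is not preserved. The paper confronts exactly this obstruction: it introduces a specific nonlinear interpolation $\homotopypart$ between properly overlapping disks (via $\angleof=\arccos\bigl((\radi_0^2+\radi_1^2-|\cent_0-\cent_1|^2)/2\radi_0\radi_1\bigr)$ and the formulae for $\cent_\timez,\radi_\timez$), and the entirety of Appendix~\ref{appendix:homotopy} is devoted to proving that this particular interpolation preserves nesting and disjointness (Corollaries~\ref{cor:fullnested} and~\ref{cor:fullvenn}). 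That is the core of the argument, and your proposal does not supply a substitute for it.

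Your section is closer in spirit to the paper's $\secty$ (both use centers of mass and an interval $(\rmin,\rmax)$ of admissible radii), though the paper uses two means of $\rmin,\rmax$ per edge rather than one, reflecting the two circles per internal edge; you should also address continuity across strata, which the paper treats with some care.
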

\begin{proof}[that there is a section $\secty$ of $\longmap$]
Let $\psplone$ be a point in $\nei_\tree$ with underlying tree $\tree_\psplone$.  We will define a point $\secty(\psplone)$ over it in $\mbarthree_\tree$. We must specify a decoration in $\tdrtwo$ for each vertex and $\tld$ for each edge.

Since $\psplone$ can be simultaneously divided along $\intedges(\tree)$, the tree $\tree$ can be contracted to the tree $\tree_\psplone$.  Any edge of $\tree$ which is not so contracted is decorated by $0\in \tld$.  Then it will suffice to consider the case when $\tree_\psplone$ has only one vertex; in the general case we can use the same procedure for each vertex of $\tree_\psplone$.

So in the case where $\tree_\psplone$ has one vertex, we can consider $\psplone$ in $\emb(\finiteset,\cC)/\affc$, so a set of $\finiteset$-indexed disjoint points up to $\affc$.  Modify this configuration to a new set of points by adding the centers of mass $\cog_\edge$ of each edge $\edge$ of $\tree$.  These may coincide with one another and with the points indexed by $\finiteset$.

Choose an $\affc$-normalization for this set.  We will define a function $\rmin:\edgesof(\tree)\to [0,\infty)$.  If $\edge$ is a leaf, then $\rmin(\edge)=0$.  Assume that $\rmin$ has been defined on every edge $\edge'$ which is a subset of $\edge$.  Then $\rmin(\edge)$ is the infimum radius so that the disk centered at $\cog_\edge$ of radius $\rmin(\edge)$ contains the disks centered at $\cog_{\edge'}$ of radius $\rmin(\edge')$.  This set is nonempty because $\psplone$ is in the $\tree$-neighborhood.

Next, we will define another function $\rmax:\edgesof(\tree)\to (0,\infty]$.    For each edge, once $\rmin$ and $\rmax$ are both defined, let $\rsmall(\edge)$ and $\rbig(\edge)$ be, respectively, the harmonic and arithmetic mean of $\rmin(\edge)$ and $\rmax(\edge)$. 

For $\edge$ the root of $\tree$, let $\rmax(\edge)=\infty$. Now assume $\rmax$ is defined for an edge $\edge$, and that the edges $\edge_1,\ldots, \edge_\ita$ are the edges of $\tree$ which have $\edge$ as their output.  For $\iti$ in $1,\ldots, \ita$, let 
\[\disty_\iti=\min\left\{
|\cog_{\edge_\iti}-\cog_\edge|-\rmin(\edge_\iti)+\rsmall(\edge),\{|\cog_{\edge_\iti}-\cog_{\edge_\itj}|-\rmin(\edge_\iti)-\rmin(\edge_\itj)\}_{\iti\ne \itj}
\right\}\]
It is easy to see that $\disty_\iti>0$.  Let \[\rmax(\edge_\iti)=\rmin(\edge_\iti)+\frac{\disty_\iti}{2}.\]
Now consider the pairs \[
\left(\frac{\cog_{\edge_\iti}-\cog_\edge}{\rsmall(\edge)},\frac{\rbig(\edge_\iti)}{\rsmall(\edge)}\right).
\]
The set of these for $\edge_\iti$ an incoming edge of $\vertex\in \verticesof(\tree)$ is an element $\psplone'_\vertex$ in $\conf(\insof(\vertex))$.  The choice depends on the normalization, but only up to rotation.  There is a rotation $\angleof_\vertex$ so that $(0,e^{\imaginaryunit\angleof_\vertex})\comp \psplone'_\vertex$ is in $\tdrtwo(\insof(\vertex))$.  Let this be the decoration on the vertex $\vertex$.  Let the decoration on the edge $\edge$ be 
\[\frac{\rsmall(\edge)}{\rbig(\edge)}e^{-\imaginaryunit\angleof_\vertex}.\]
Translation does not affect any of the radii or a difference in centers of mass.  Rotation only affects centers of mass, and is accounted for by $\angleof_\vertex$.  Dilation acts on all radii and each center of mass in the same manner, so these formulae do not depend on the $\affc$-representative used to generate them.

If we compose the decorations of $\secty(\psplone)$ using the map $\comp$, we eventually arrive at a set of points
\[\frac{\cog_\ell-\cog_\rootv}{\rsmall(\rootv)}\]
indexed by the leaves $\ell$ of $\tree$, where $\rootv$ is the root of $\tree$.  The set of these is conformally equivalent to the set $\{\cog_\ell\}$ by the action of $(\cog_\rootv,\rsmall(\rootv))\in\affc$.  Each $\cog_\ell$ is the center of mass of one point, and so this shows that $\secty$ is a section of $\longmap$.

To see continuity within an open stratum, consider that $\cog_\edge$ varies continuously as one moves around an open stratum; then $\rmin(\edge)$ (in a fixed $\affc$ representative) can be realized as the distance from $\cog_\edge$ to the complement of some disks which depend on other $\cog_\edge$ and earlier $\rmin(\edge)$.  Then $\rmin$ varies continuously by induction.  The same is true for $\rmax$, so $\rsmall$ and $\rbig$ also vary continuously.  

Now suppose the sequence $\psplone_\itk$ in an open stratum approaches a degeneration $\psplone$ in a higher codimension stratum in $\Mbar$.  If $\edge$ is an edge that does not correspond to a node in $\psplone$, then by appropriately renormalizing, the decoration on $\edge$ in $\psplone_\itk$ approaches that in $\psplone$ as in the previous paragraph.  If, on the other hand, $\edge$ corresponds to a node in $\psplone$, then let $\edge_0$ and $\edge_1$ be two incoming edges of $\edge$.  Normalize by placing $\cog_{\edge_0}$ and $\cog_{\edge_1}$ at $0$ and $1$, respectively; $\rmin(\edge)$ will stay bounded and bounded away from zero, while $\rmax(\edge)$ will blow up.  Then $\frac{\rsmall(\edge)}{\rbig(\edge)}$ will approach $0$, and we will achieve the desired edge decoration in the limit.  

Switching normalizations, normalize $\psplone_\itk$ to make $\rsmall(\edge)=1$ (this can be done unless $\edge$ is a leaf), put $\cog_\edge$ at $0$, and rotate so that $\angleof_\edge=0$.  In this normalization, the decoration on the vertex $\edge$ consists of pairs $(\cog_{\edge_\iti}, \rbig(\edge_\iti))$.  In the limit, if $\edge$ corresponds to a node, then we must be careful; it is involved in two trees and has different radius values in each.  In the upper tree, the one pertinent to the case we are considering, we have finite $\rmin(\edge)$, so we can use the same normalization, and we get the limits of the sets of pairs.  That is, $\cog_{\edge_\iti}$ is insensitive to whether points coincide in the limit, and $\rbig(\edge_\iti)$ depends only on $\rmax(\edge)$, which approaches $\infty$ as $\psplone_\itk\to \psplone$, and $\rmin(\edge_\itj)$, which approaches $0$ in those cases where $\edge_\itj$ also corresponds to a node in $\psplone$.
\end{proof}
\begin{defi}[Construction of a homotopy for Lemma~\ref{lemma:giveswe1}]
As in the construction of $\secty$, we will define the homotopy explicitly only in the top stratum.  In other strata, where some edges must be decorated with $0\in\td$, we will use multiple copies of the homotopy for various subtrees glued together along these edges.

We can describe the portion of $\mbarthree_\tree$ above the top stratum in a different presentation.  Up to $\affc$, its image in $\Mbar$ is a set of disjoint points in $\cC$ indexed by the leaves of $\tree$ which is realized by composition of the various vertex and edge decorations.  Therefore, we can realize a point of $\mbarthree_\tree$ as a set of disjoint points and circles; for an edge decorated by the point $\radi$, we draw circles $\disk_\edge^1$ and $\disk_\edge^\radi$ (we will refer to them collectively as $\disk_\edge$) corresponding to the image of the circles of radius $1$ and $|\radi|$.  At the leaves, the inner ``circle'' has radius $0$ and coincides with the marked point, and at the root the outer ``circle'' has radius $\infty$.  Finally, the location of the marked point on each boundary circle is determined by the normalizations involved.  We have said enough to justify the following description:
\begin{lemma}
Let $\finiteset$ be a finite set, and fix a $\finiteset$-tree $\tree$.  The top stratum $\mbarthreetop_\tree$ of $\mbarthree_\tree$ is homeomorphic to the subset of $\conf(\edgesof(\tree)\backslash\{\text{root}\})\times\conf(\edgesof(\tree))$ consisting of configurations $\prod (\cent_\edge,\radi_\edge)\times \prod (\hat{\cent}_\edge,\hat{\radi}_\edge)$ so that
\begin{enumerate}
\item For a fixed edge, the disk centered at $\cent_\edge$ of radius $\radi_\edge$ contains the disk centered at $\hat{\cent_\edge}$ of radius $\radi_\edge$,
\item all radii are real,\footnote{This does not mean that the marked points of the various configurations are all in the positive real direction from the corresponding centers.  Rather, because the normalizations determine the arguments of the marked points, we lose no information by forgetting them, and this choice is a convenient one.}
\item if $\edge$ contains $\edge'$ then the disk centered at $\cent_\edge$ of radius $\hat{\radi}_\edge$ contains the disk centered at $\cent_{\edge'}$ of radius $\radi_{\edge'}$, and
\item $\edge$ is a leaf if and only if $\hat{\radi}_\edge=0$.
\end{enumerate}
\end{lemma}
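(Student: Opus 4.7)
The plan is to produce an explicit homeomorphism by unpacking the iterated composition of the tree decorations into the two nested families of disks described in the statement. A point of $\mbarthreetop_\tree$ is a tuple consisting of a decoration $\psplone_\vertex \in \tdrtwo(\edgesof(\vertex))$ at each vertex and a nonzero element $\psplone_\edge \in \tld$ (that is, with $|\radi_\edge|>0$) at each internal edge; external edges carry a trivial decoration. Starting at the root and working upward, I would use $\comp$ to realize each little disk of every $\psplone_\vertex$ inside the root's unit disk: each internal edge $\edge$ then sits in the ambient frame as two concentric disks, the \emph{outer} one $(\cent_\edge,\radi_\edge)$ being the image of the unit disk attached to $\edge$ in the vertex just below it, and the \emph{inner} one $(\hat{\cent}_\edge, \hat{\radi}_\edge)$ being the image after scaling by $\psplone_\edge$ (so $\hat{\cent}_\edge = \cent_\edge$ and $|\hat{\radi}_\edge|=|\psplone_\edge|\cdot|\radi_\edge|$). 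At external edges the inner disk collapses to the marked point, matching $\hat{\radi}_\edge=0$.

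Next, I would verify the four conditions on the image. The containment of the inner disk in the outer disk at each edge reduces to $|\psplone_\edge|\le 1$, built into $\tld$. Reality of all radii follows because the $\tdrtwo$ normalization has $\radi_\element\in \rR_+$, the edge scalars only contribute their moduli to the radii after the normalization is propagated, and (as the lemma's footnote records) we are free to forget the arguments of the marked points on each circle once the normalization is fixed. The containment of the children's outer disks inside the parent's inner disk is precisely the encoding of how the decoration on the upper vertex is glued into the appropriate little disk of the lower vertex through $\comp$, using that the outer-to-inner scaling is the magnitude of the edge decoration. The leaf condition is immediate from the external-edge convention.

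To produce the inverse, I would reconstruct the tree decorations from a configuration satisfying the four conditions, again working from the root upward. At each vertex $\vertex$, the relative positions of the outer disks $(\cent_\edge,\radi_\edge)$ attached to the incoming edges determine a unique element of $\conf(\insof(\vertex))$, which after the standard $\tdrtwo$ normalization yields $\psplone_\vertex$; this normalization introduces a rotation that is recorded by the argument of the edge decoration $\psplone_\edge$, while the ratio $\hat{\radi}_\edge/\radi_\edge$ gives $|\psplone_\edge|$. Continuity of both directions is routine: the forward map is a composition of $\comp$ applied to continuously varying data, and the reverse map is built from ratios and arguments of quantities that are nonzero throughout the top stratum.

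The main obstacle is bookkeeping in the reconstruction step: I need to check that the normalization prescribed by $\tdrtwo$ (namely $\cent_1=0$, $\cent_2-\cent_1\in\rR_+$, and positive real radii) is recovered consistently as one passes between the ambient root frame and each vertex's intrinsic frame, and that the rotational freedom in the edge decorations is exactly what compensates for this. Carefully tracking which portion of the observed geometry is absorbed into the vertex decoration versus the edge decoration, and confirming that this assignment is mutually inverse to the forward map above, is the place where care is needed; everything else is a direct consequence of the definitions of $\tdrtwo$, $\tld$, $\comp$, and the construction of $\longmap$.
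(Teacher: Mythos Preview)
Your approach is exactly the paper's: the paper does not give a formal proof of this lemma but instead precedes it with an informal description (``we can realize a point of $\mbarthree_\tree$ as a set of disjoint points and circles; for an edge decorated by the point $\radi$, we draw circles $\disk_\edge^1$ and $\disk_\edge^\radi$ corresponding to the image of the circles of radius $1$ and $|\radi|$\ldots'') and then says ``We have said enough to justify the following description.'' Your outline is a more detailed version of precisely this unpacking of the iterated $\comp$-composition into nested disks, together with the evident inverse.

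One small correction: you write $\hat{\cent}_\edge=\cent_\edge$ and treat the edge decoration as a scalar, but an internal edge carries an element of $\tld$, i.e.\ a pair $(\cent',\radi')$ with $|\cent'|+|\radi'|\le 1$, not just a modulus. Composing gives $\hat{\cent}_\edge=\cent_\edge+\radi_\edge\cent'$, so the inner disk need not be concentric with the outer one; condition~(1) in the statement is exactly the containment $|\cent'|+|\radi'|\le 1$ transported to the ambient frame. Correspondingly, in your inverse you must recover both the center and radius of the edge decoration (not just $|\psplone_\edge|$) from the relative position of the inner disk inside the outer one. This does not change your strategy, only the bookkeeping you already flagged as the delicate part.
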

We will now describe an $\affc$-equivariant homotopy over the top stratum of $\Mbar$ from the identity to $\secty$.  The beginning and end of the homotopy, at any point in the domain of the homotopy, have the same underlying configuration of points, which we arbitrarily normalize once and for all.  They also have the same nesting pattern for the various circles; that is, for a particular point $\psplone$, if the disk bounded by one such circle is contained in the disk bounded by another at time $0$, then it is also so contained at time $1$ (with some obvious exceptions if two disks coincide).  In particular, since every circle contains some marked points, which are in the same position at the beginning and end of the homotopy, the open disks bounded by $\disk_\edge(\psplone)$ and $\disk_\edge(\secty\lambda \psplone)$ have nonempty intersection.

Focussing on one particular circle $\disk_\edge$, we have a specified center and radius at time $0$ and $1$, $\cent_0,\cent_1$, $\radi_0$, and $\radi_1$.  We will extend these to $\cent_\timez$ and $\radi_\timez$ for all $\timez\in \interval$. 

Consider the subset $\overlappedset$ of points $(\cent_0,\radi_0,\cent_1,\radi_1)\subset \cC^2\times \rR_+^2$ so that $|\cent_0-\cent_1|<\radi_0+\radi_1$. We call such pairs of disks {\em properly overlapping}.  We will define a map $\homotopypart:\overlappedset\times \interval\to \cC\times\rR_+$, $\homotopypart=(\cent_\timez,\radi_\timez)$.

First define \[\angleof(\cent_0,\radi_0,\cent_1,\radi_1)=\arccos\left(\frac{\radi_0^2+\radi_1^2-|\cent_0-\cent_1|^2}{2\radi_0\radi_1}\right).\]
Because of the conditions, the quantity inside the arccosine is strictly greater than $-1$, so $\angleof$ (we will suppress the arguments) is either in $[0,\pi)$ or of the form $\realvariable\imaginaryunit$ for some positive number $\realvariable$.  $\arccos$ is a bijection between this domain and range.

Now, for $\angleof\ne 0$, define
\[\cent_\timez=\frac{\cent_1\radi_0\sin(\timez\angleof)+\cent_0\radi_1\sin((1-\timez)\angleof)}{\radi_0\sin(\timez\angleof)+\radi_1\sin((1-\timez)\angleof)},\qquad \radi_\timez=\frac{\radi_0\radi_1\sin(\angleof)}{\radi_0\sin(\timez\angleof)+\radi_1\sin((1-\timez)\angleof)};\]
If $\angleof=0$, let
\[\cent_\timez=\frac{\cent_1\radi_0\timez+\cent_0\radi_1(1-\timez)}{\radi_0\timez+ \radi_1(1-\timez)},\qquad \radi_\timez=\frac{\radi_0\radi_1}{\radi_0\timez+\radi_1(1-\timez)}.\]  This is easily continuous by the squeeze theorem.
\end{defi}
\begin{defi}
Let $\totallyadmissiblesubset$ be the subset of $\mbarthreetop_\tree\times\mbarthreetop_\tree$ consisting of tuples
\[
\prod (\cent_{\edge,0},\radi_{\edge,0})\times \prod (\hat{\cent}_{\edge,0},\hat{\radi}_{\edge,0})
\times
\prod (\cent_{\edge,1},\radi_{\edge,1})\times \prod (\hat{\cent}_{\edge,1},\hat{\radi}_{\edge,1})
\]
so that 
\begin{enumerate}
\item $(\cent_{\edge,0},\radi_{\edge,0},\cent_{\edge,1},\radi_{\edge,1})$ is in $\overlappedset$,  
\item likewise, if $\edge$ is not a leaf, $(\hat{\cent}_{\edge,0},\hat{\radi}_{\edge,0},\hat{\cent}_{\edge,1},\hat{\radi}_{\edge,1})$ is in $\overlappedset$, and
\item for $\leaf$ a leaf, $\cent_{\leaf,0}=\cent_{\leaf,1}$.
\end{enumerate}
Define a homotopy $\homotopy:\totallyadmissiblesubset\times\interval\to(\cC\times \rR_+)^{\edgesof(\tree)\backslash\{\text{root}\}}\times(\cC\times \rR_+)^{\edgesof(\tree)}$ by taking
$\{\cent_{\edge,\iti},\radi_{\edge,\iti};\hat{\cent}_{\edge,\iti},\hat{\radi}_{\edge,\iti}\},\timez$ to
\[\{\homotopypart_\timez(\cent_{\edge,0},\radi_{\edge,0},\cent_{\edge,1},\radi_{\edge,1});
\homotopypart_\timez(\hat{\cent}_{\edge,0},\hat{\radi}_{\edge,0},\hat{\cent}_{\edge,1},\hat{\radi}_{\edge,1})\}\]
where $\homotopypart_\timez(\hat{\cent}_{\leaf,0},\hat{\radi}_{\leaf,0},\hat{\cent}_{\leaf,1},\hat{\radi}_{\leaf,1})$, which was not defined, is interpreted as $(\hat{\cent}_{\leaf,0},\hat{\radi}_{\leaf,\iti})$, which is independent of $\iti$.
\end{defi}
The continuity of $\homotopypart$ implies the continuity of $\homotopy$ on $\totallyadmissiblesubset$.
\begin{prop}\label{prop:homotopyconstruction}
The image of the homotopy $\homotopy$ on $\totallyadmissiblesubset$ is in $\mbarthreetop_\tree$.
\end{prop}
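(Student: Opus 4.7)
The plan is to show that $\homotopy$ preserves, for every $\timez\in\interval$, each of the four defining conditions of $\mbarthreetop_\tree$ from the preceding lemma. Conditions 2 (all radii real and positive) and 4 (leaf radii vanish) are essentially formal. Condition 2 I would check by inspecting the formula for $\radi_\timez$: when $\angleof\in[0,\pi)$ every sine in the formula is a nonnegative real and the ratio is a positive real; when $\angleof=\realvariable\imaginaryunit$ every sine is pure imaginary, so the imaginary units cancel in the ratio and it is again a positive real. Condition 4 holds because the homotopy is defined to interpret the leaf term as $(\hat\cent_{\leaf,0},0)$ for every $\timez$, while for non-leaf edges the positivity established for condition 2 guarantees $\hat\radi_{\edge,\timez}>0$ throughout.

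The substantive content is conditions 1 and 3, each of which asserts the containment of one disk in another. These hold at $\timez=0,1$ by hypothesis, but their coordinate pairs are being interpolated independently by $\homotopypart$, so the containments must be verified at interior times as well. I would reduce both to the following key geometric lemma: if $(A_0,A_1)$ and $(B_0,B_1)$ lie in $\overlappedset$, and for $i=0,1$ the disk at $A_i$ is contained in the disk at $B_i$, then the disk $\homotopypart_\timez(A_0,A_1)$ is contained in the disk $\homotopypart_\timez(B_0,B_1)$ for every $\timez\in\interval$. Condition 1 is then a direct application, using that $\cent_{\edge,i}=\hat\cent_{\edge,i}$ at the endpoints (which follows because $\disk_\edge^1$ and $\disk_\edge^{\radi}$ arise from concentric circles under an affine composition, hence remain concentric in $\cC$ at $\timez=0,1$). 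Condition 3 requires a mild variant of the lemma, because the hybrid disk $(\cent_{\edge,\timez},\hat\radi_{\edge,\timez})$ is not itself the output of a single $\homotopypart$; here I would combine the lemma with the endpoint agreement of centers to reduce to a parallel inequality, tracking the two radii and the drift of the center separately.

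The main obstacle is proving this containment lemma. The essential geometric fact is that $\angleof$ is the cosine of the angle at which $\partial D_0$ and $\partial D_1$ meet in the elliptic (real-angle) regime, and an analytic continuation of this in the hyperbolic (imaginary-angle) regime of properly nested disks; the formulas defining $\homotopypart_\timez$ then recover, in both regimes, the unique circle in the coaxial pencil spanned by $\partial D_0$ and $\partial D_1$ making angle $\timez\angleof$ with $\partial D_0$. I would exploit the $\affc$-equivariance of both the hypotheses and the defining formulas of $\homotopypart$ to reduce to a normalized case, for instance $(\cent_0,\radi_0)=(0,1)$. In this normalized setting, containment of disks becomes an explicit inequality in the remaining coordinates $\cent_1,\radi_1$, and the lemma reduces to a monotonicity statement for a two-parameter family of circles in a coaxial pencil. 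I would verify this monotonicity by direct calculation in the elliptic and hyperbolic regimes separately, with the $\angleof\to 0$ degenerate limit handled by the same squeeze theorem argument used to define $\homotopypart$ continuously at $\angleof=0$.
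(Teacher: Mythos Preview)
Your reduction of conditions 2 and 4 to formal checks is fine, but you have missed half of the substantive content. Membership in $\mbarthreetop_\tree$ requires not only the nesting relations (outer-of-child inside inner-of-parent) but also \emph{disjointness} of sibling disks: when two edges $\edge,\edge'$ share the same output vertex, the disks $(\cent_{\edge,\timez},\radi_{\edge,\timez})$ and $(\cent_{\edge',\timez},\radi_{\edge',\timez})$ must remain disjoint throughout the homotopy. This disjointness is implicit in the ambient configuration space and is explicitly isolated as one of the two statements to be proved at the start of the appendix. It does not follow from your containment lemma: the complement of a disk is not a disk, and $\homotopypart$ is $\affc$-equivariant but not M\"obius-equivariant, so one cannot convert disjointness into nesting by inversion. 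The paper devotes a separate and rather different argument to this case, introducing tangent half-planes, showing that the intermediate half-plane of an admissible pair still contains the intermediate disk (Proposition~\ref{prop:venn}), and then that the two intermediate half-planes remain disjoint (Corollaries~\ref{cor:halfvenn} and~\ref{cor:fullvenn}).

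For the nesting case your coaxial-pencil interpretation and $\affc$-normalization outline is plausible, and it differs from the paper's route. The paper does not attempt a direct monotonicity argument in $\timez$; instead it proves containment only at $\timez=\tfrac12$ (Proposition~\ref{prop:spacing}), by interpolating through an auxiliary pair of disks so as to reduce to two explicit special cases, concentric centers (Lemma~\ref{lemma:spacingcent}) and tangent-at-one-end (Lemma~\ref{lemma:spacingtang}), each handled by an elementary one-variable calculus computation. It then bootstraps to all $\timez$ via the self-similarity $\homotopypart_\timez(\diskone_0,\diskone_1)=\homotopypart_{2\timez}(\diskone_0,\diskone_{1/2})$ of Lemma~\ref{lemma:halves}, passing through dyadic rationals and closing up by continuity (Corollary~\ref{cor:fullnested}). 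Your approach, if it goes through, would be more conceptual; the paper's is more computational but avoids any appeal to the pencil geometry. Either way, you would still owe the disjointness half.
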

The proof of this proposition is elementary, albeit intricate, and will be deferred to Appendix~\ref{appendix:homotopy}.
\begin{proof}[of Lemma~\ref{lemma:giveswe1}]
We have constructed a section, $\secty$.  Because this is a section, for $(\psplone_\iti)$ in the open stratum of $\mbarthree_\tree$, $(\psplone_\iti,(\secty\longmap \psplone)_\iti)$ is in $\totallyadmissiblesubset$ so that $(\id,\secty\longmap)$ is a map from $\mbarthree_\tree$ to $\totallyadmissiblesubset$.  Then $\homotopy\circ (\id,\secty\longmap)$ is a homotopy between $\id$ and $\secty\longmap$ on the open stratum of $\mbarthree_\tree$ over $\nei_\tree$.

It only remains to show that this homotopy has the appropriate limiting behavior as one approaches a lower stratum.  This follows from the facts that $\secty$ has the appropriate behavior as described above and that $\homotopypart$ is $\affc$-equivariant, with the simultaneous action on both factors of $\overlappedset$.
\end{proof}
\begin{lemma}\label{lemma:giveswe2}
Let $\Unei$ be the intersection of a finite set $\{\nei_{\tree_\iti}\}$.  Then for all $\iti$, the map from the preimage of $\unei$ in $\mbarthree_{\tree_\iti}$ to the preimage of $\unei$ in $\mbarone$ is a weak equivalence.
\end{lemma}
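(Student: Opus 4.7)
Fix $\iti$ and write $\tree := \tree_\iti$; set $A := \mbarthree_\tree \cap \longmap^{-1}(\Unei)$ and $B := \longproj^{-1}(\Unei) \subset \mbarone$. The map in question, which I denote $\phi \colon A \to B$, is the first factor of $\longmap$ restricted to $A$. My plan is to construct an explicit homotopy inverse $\widetilde{\secty} \colon B \to A$ defined by $\widetilde{\secty} := \secty \circ \longproj$, where $\secty$ is the section from the proof of Lemma~\ref{lemma:giveswe1}. This is well-defined because $\longproj(B) \subset \Unei \subset \nei_\tree$, $\secty(\nei_\tree) \subset \mbarthree_\tree$, and $\longmap \circ \secty = \id$. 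The composition $\widetilde{\secty} \circ \phi$ equals $\secty \circ \longmap|_A$ and is homotopic to $\id_A$ by Lemma~\ref{lemma:giveswe1} combined with Lemma~\ref{lemma:DRoY}.

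For the other composition $\phi \circ \widetilde{\secty} \colon B \to B$, I would proceed in two stages. First, every point $\psplone \in B$ has a canonical underlying tree $\tree'$ in $\mbarone$ related to $\tree$ by contracting a specific set of edges, since the stable tree of $\longproj(\psplone) \in \nei_\tree$ is a contraction of $\tree$. This gives a canonical expansion map $\hat\jmath \colon B \to A$ that decorates each missing edge of $\tree$ with the $\thecircle$-unit $(0,1)$ and keeps the other decorations unchanged; by construction $\phi \circ \hat\jmath = \id_B$, because $(0,1)$-decorated edges contract trivially under the pushout identifications in $\mbarone$. Second, both $\hat\jmath(\psplone)$ and $\widetilde{\secty}(\psplone)$ lie in $A$ with the same $\Mbar$-image $\longproj(\psplone)$; since the homotopy $\homotopy$ from Lemma~\ref{lemma:giveswe1} is $\affc$-equivariant and fiberwise over $\Mbar$, it provides a path between them within $A$. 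Applying $\phi$ to this path yields the desired homotopy in $B$ from $\id_B$ to $\phi \circ \widetilde{\secty}$.

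The main obstacle is verifying the continuity of $\hat\jmath$ across the stratification of $B$ and checking well-definedness of the pushed-down homotopy on $\mbarone$. Continuity of $\hat\jmath$ requires that as the underlying tree of a point in $B$ degenerates (an edge contracts, or a $\thecircle$-rotation appears on an edge), the insertion of $(0,1)$-edges transitions smoothly; this parallels the boundary analysis at the end of the proof of Lemma~\ref{lemma:giveswe1}, using that $(0,1)$ is the identity of $\thecircle$. Well-definedness on $\mbarone$ follows because $\homotopy$ is built from the composition $\comp$ in an $\affc$-equivariant way, and the pushout identifications defining $\mbarone$ are precisely those respecting this structure. Once these verifications are carried out, $\phi$ is a homotopy equivalence, hence a weak equivalence.
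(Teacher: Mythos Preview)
Your proposed section $\hat\jmath\colon B\to A$ is not well-defined. You claim that every $\psplone\in B=\longproj^{-1}(\Unei)$ has a canonical underlying tree $\tree'$ in $\mbarone$ which is a contraction of $\tree$, reasoning from the fact that the stable tree of $\longproj(\psplone)\in\nei_\tree$ is a contraction of $\tree$. But this inference is invalid: the minimal tree of $\psplone$ in $\mbarone$ can have internal edges decorated by annuli in $\tld\setminus\thecircle$ (inner radius strictly less than one) that do \emph{not} correspond to any edge of $\tree$. Such an edge gets contracted under $\longproj$ (positive radius means no node in $\Mbar$), so it disappears at the $\Mbar$ level, yet it is genuinely present in $\mbarone$. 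The paper addresses this head-on: ``the preimage of $\unei$ in $\mbarone$ may contain configurations with annuli that don't correspond to vertices of $\tree_\iti$,'' and a preliminary retraction is needed to eliminate them.

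Even after one removes the inappropriate annuli, your $\hat\jmath$ would still have a continuity problem you do not resolve. The ``minimal tree'' of a point in $\mbarone$ jumps discontinuously as an internal-edge decoration crosses the circle $\thecircle\subset\tld$ (radius hitting one). This is not analogous to the degeneration analysis at the end of Lemma~\ref{lemma:giveswe1}, which concerns radii going to zero and is controlled by the $\Mbar$ stratification; here the discontinuity is purely an artifact of the pushout defining $\mbarone$. The paper handles this by a second application of the folklore cover lemma, using the open sets $\verticesof_{\someedges_1,\someedges_2}$ (radii bounded away from $0$ on $\someedges_1$, bounded away from $1$ on $\someedges_2$) and a further retraction on each piece before adapting the section and homotopy of Lemma~\ref{lemma:giveswe1}. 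Finally, your appeal to the homotopy $\homotopy$ to connect $\hat\jmath(\psplone)$ and $\widetilde{\secty}(\psplone)$ is unjustified: $\homotopy$ is the specific homotopy $\id\simeq\secty\circ\longmap$, not a machine for connecting arbitrary pairs of points in the same $\Mbar$-fiber of $A$.
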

\begin{proof}
We will eventually follow the proof of Lemma~\ref{lemma:giveswe1}, but cannot do so immediately because it is not obvious how to pick a section without modifying the situation.  This is because of the equivalence relation that equates different configurations with ``annuli'' that have inner radius one in $\mbarone$.  

First, the preimage of $\unei$ in $\mbarone$ may contain configurations with annuli that don't correspond to vertices of $\tree_\iti$.  There is a retraction onto the subspace containing no such inappropriate annuli.  None of these illegal configurations are in the image of $\mbarthree_{\tree_\iti}$ so we can begin by replacing the preimage with its retract.   Let $\someedges_1,\someedges_2$ be sets of edges of $\tree_\iti$ so that $\someedges_1\cup\someedges_2=\edgesof(\tree_\iti)$.  Let $\verticesof_{\someedges_1,\someedges_2}\subset \mbarone$ be the the subset of the preimage of $\unei$ so that the inner radius of every annulus corresponding to an edge in $\someedges_1$ is greater than zero and the inner radius of every annulus corresponding to an edge in $\someedges_2$ is less than one.  These are open sets and cover the preimage of $\unei$, as every radius is either greater than zero or less than one.  They are closed under intersection, as $\verticesof_{\someedges_1,\someedges_2}\cap \verticesof_{\someedges'_1,\someedges'_2}=\verticesof_{\someedges_1\cup\someedges'_1,\someedges_2\cup\someedges'_2}$.  So by Lemma~\ref{lemma:folklore},  it suffices to show that the map from the preimage in $\mbarthree_{\tree_\iti}$ of $\verticesof_{\someedges_1,\someedges_2}$ is a weak equivalence.

In both the domain and the range there is a deformation retraction to the subset where the radius of every annulus corresponding to an edge in $\someedges_1\backslash\someedges_2$ has radius one.  This involves making a choice; we shall hold the outer radius fixed and increase the inner radius, modifying the little disk inside it.  Now it suffices to show that the map in question is a weak equivalence when restricted to these deformation retracts.  

In this restricted situation, we have specified center and inner and outer radii for each of the edges in $\someedges_2$, so to specify a section, it suffices to choose a center and outer radius for each edge in $\someedges_1\backslash \someedges_2$ (the inner radius must be one).

We will use the centers of mass $\cog_\edge$ as centers.  We can construct $\rmin$ and $\rmax$ as in the construction of $\secty$ for the edges in $\someedges_1\backslash \someedges_2$, using the fixed inner radii of the other annuli in place of $\rmin$ and $\rsmall$ (respectively the outer radii for $\rbig$).  We choose only one of the radii, say, $\rsmall$, for the edges corresponding to annuli of inner radius one, using it as both $\rsmall$ and $\rbig$.

The homotopy used in the proof of Lemma~\ref{lemma:giveswe1} can be used without modification because it preserves annuli of inner radius one, annuli with inner radius less than one, and annuli with inner radius greater than zero.
\end{proof}
\begin{proof}[of Theorem~\ref{thm:toptheoremtwo}] 
Let $\unei$ be a finite intersection of some collection of $\tree$-neighborhoods in $\Mbar$.  By Lemmas~\ref{lemma:DRoY}~and~\ref{lemma:giveswe1}, the maps from the preimages of $\unei$ in $\mbarthree_{\tree_\iti}$ to $\unei$ are weak equivalences. By Lemma~\ref{lemma:giveswe2} and the two-out-of three axiom, the maps from the preimages of $\unei$ in $\mbarone$ to $\unei$ are weak equivalences.

Neighborhoods of the form $\unei$ form a cover of $\Mbar$ with the finite intersection property. Thus by Lemma~\ref{lemma:folklore}, $\mbarone\to\Mbar$ is a weak equivalence.
\end{proof}
%
%
%
\appendix
\section{Model category background}\label{sec:models}
We briefly describe model categories, which are used to define the homotopy pushout. \cite{Hovey:MC}~and~\cite{Hirschhorn:MCL} are good general references; we will not state all of the background to the theory but instead will use several results as a black box.
\begin{defi}
Let $\morphismone$ and $\morphismtwo$ be morphisms in a category.  We say that $\morphismone$ has the left lifting property with respect to $\morphismtwo$ and that $\morphismtwo$ has the right lifting property with respect to $\morphismone$ if there is a dotted filler morphism for every solid diagram of the form
\[
  \xymatrix{
    \firstobject\ar[d]_\morphismone\ar[r]&\secondobject\ar[d]^\morphismtwo\\\thirdobject\ar@{.>}[ur]\ar[r]&\fourthobject
    }\]
    \end{defi}
\begin{defi}
A {\em model category} is a bicomplete category equipped with three subcategories, called {\em fibrations}, {\em cofibrations}, and {\em weak equivalences} satisfying the axioms below.  A morphism that is both a weak equivalence and in one of the other two subcategories is called {\em trivial}. An object in the model category is called {\em cofibrant} if the morphism from the initial object to it is a cofibration and {\em fibrant} if the morphism to the terminal object is a fibration.  The axioms are:
\begin{enumerate}
\item all classes are closed under retracts,
\item (two of three) if $\morphismone$ and $\morphismtwo$ are composable morphisms so that two of $\morphismone$, $\morphismtwo$, and $\morphismone\circ\morphismtwo$ are weak equivalences, so is the third.
\item A morphism is a (trivial) cofibration if and only if it has the left lifting property with respect to all trivial fibrations (fibrations), and a morphism is a (trivial) fibration if and only if it has the right lifting property with respoct to all trivial cofibrations (cofibrations).  Thus the weak equivalences and the fibrations determine the cofibrations, among other similar statements.
\item Every morphism can be factored as a cofibration followed by a fibration, either one of which may be chosen to be a weak equivalence.
\end{enumerate}
\end{defi}
We will need the following facts:
\begin{prop}\label{prop:modelcatfact}
\begin{enumerate}
\item If
\[
  \xymatrix{
    \firstobject\ar[d]_\morphismone\ar[r]&\secondobject\ar[d]^\morphismtwo\\\thirdobject\ar[r]&\fourthobject
    }\]
is a pushout diagram and $\morphismone$ is a cofibration, so is $\morphismtwo$
\item The category of (weak Hausdorff) compactly generated spaces is a model category where the fibrations are the Serre fibrations and the weak equivalences are maps inducing isomorphisms on all homotopy groups.
\item Let $\spacea\to \spaceb$ and $\spacec\to \spaced$ be cofibrations of spaces.  Then $(\spacea\times \spaced)\sqcup_{\spacea\times \spacec}(\spaceb\times \spacec)\to \spaceb\times \spaced$ is a cofibration.
\end{enumerate}
\end{prop}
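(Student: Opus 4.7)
The three items are standard model-category facts; my plan is to handle them in order, giving a brief direct argument for part~(1) and citing the literature for the substantive pieces (2) and (3).

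For part~(1), the argument is purely formal from the lifting-property characterization of cofibrations provided by axiom~(3). To show $\morphismtwo$ has the left lifting property against every trivial fibration $p\colon\secondobject'\to\fourthobject'$, fix a commutative square with top map $\alpha\colon\secondobject\to\secondobject'$ and bottom map $\beta\colon\fourthobject\to\fourthobject'$. Form the auxiliary lifting problem for $\morphismone$ whose top edge is $\alpha$ precomposed with $\firstobject\to\secondobject$ and whose bottom edge is $\beta$ precomposed with $\thirdobject\to\fourthobject$; by hypothesis this problem has a solution $h\colon\thirdobject\to\secondobject'$. The maps $\alpha$ and $h$ agree on $\firstobject$ by construction, so the universal property of the pushout produces a unique map $\fourthobject\to\secondobject'$ that restricts to $\alpha$ on $\secondobject$ and to $h$ on $\thirdobject$, and a brief diagram chase verifies that this map is the desired lift.

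Part~(2) is the classical Quillen--Serre model structure on compactly generated weak Hausdorff spaces; rather than reproduce the construction I would cite Hovey or Hirschhorn. Part~(3) is the pushout-product axiom (sometimes called SM7) for this model structure. The cleanest route is through cartesian closedness: a lifting problem for $(\spacea\times\spaced)\sqcup_{\spacea\times\spacec}(\spaceb\times\spacec)\to \spaceb\times\spaced$ against a trivial Serre fibration $E\to B$ transposes, via the exponential adjunction, to a lifting problem for $\spacea\to\spaceb$ against the Leibniz map from $\mathrm{Map}(\spaced,E)$ to the pullback $\mathrm{Map}(\spaced,B)\times_{\mathrm{Map}(\spacec,B)}\mathrm{Map}(\spacec,E)$; one then checks that this Leibniz map is itself a trivial fibration, which by another application of the adjunction reduces to the hypothesis that $\spacec\to\spaced$ is a cofibration. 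A more hands-on alternative is to reduce to the generating cofibrations $S^{n-1}\hookrightarrow D^n$, where the pushout-product is the standard cell inclusion $S^{n+m-1}\hookrightarrow D^{n+m}$, and then extend by the closure of cofibrations under pushouts, transfinite compositions, and retracts (using part~(1) along the way).

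The main obstacle, if one insisted on a self-contained treatment, would be the careful verification of the SM7 axiom in the weak Hausdorff compactly generated setting: the adjunction-based argument is formally tidy but depends on the exponential behaving correctly with respect to Serre fibrations, which is where the real technical work sits. Since the paper uses these only as background input to Lemma~\ref{lemma:cubecofibration} and Proposition~\ref{prop:onetotwo}, I would provide the short self-contained argument for part~(1) and cite the standard references for the remaining parts rather than reproduce well-known material.
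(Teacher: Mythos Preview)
Your proposal is correct, but the paper does not actually prove this proposition at all: it is stated as a list of background facts (introduced by ``We will need the following facts:'') with no proof, implicitly deferring to the general references \cite{Hovey:MC} and \cite{Hirschhorn:MCL} cited at the head of the appendix. So your treatment goes well beyond what the paper provides. Your short lifting-property argument for part~(1) and your citations for parts~(2) and~(3) are exactly the sort of justification the paper omits; the only difference is that you supply content where the paper is silent.
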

This last has two useful corollaries:
\begin{cor}\label{cor:prodcofibration}
\begin{enumerate}\item Let $\spacea\to \spaceb$ be a cofibration of spaces and $\spaced$ a cofibrant space.  Then $\spacea\times \spaced\to \spaceb\times \spaced$ is a cofibration of spaces.
\item
Let $\spaceb$ and $\spaced$ be cofibrant spaces.  Then $\spaceb\times\spaced$ is cofibrant.
\end{enumerate}
\end{cor}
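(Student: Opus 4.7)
The plan is to derive both parts of Corollary~\ref{cor:prodcofibration} as immediate specializations of the pushout-product statement in item~(3) of Proposition~\ref{prop:modelcatfact}, by plugging in the initial object $\emptyset$ at one end.

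For part~(1), I would take the given cofibration $\spacea\to\spaceb$ and pair it with the cofibration $\emptyset\to\spaced$ (which is a cofibration precisely because $\spaced$ is cofibrant). The pushout-product map then reads
\[
(\spacea\times\spaced)\sqcup_{\spacea\times\emptyset}(\spaceb\times\emptyset)\longrightarrow \spaceb\times\spaced.
\]
Since $\spacea\times\emptyset=\emptyset=\spaceb\times\emptyset$ in compactly generated spaces, the domain collapses to $\spacea\times\spaced$, and the pushout-product map is exactly $\spacea\times\spaced\to\spaceb\times\spaced$. Item~(3) of Proposition~\ref{prop:modelcatfact} then tells us this map is a cofibration.

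For part~(2), I would simply feed part~(1) back into itself with $\spacea=\emptyset$ and the cofibration $\emptyset\to\spaceb$ (a cofibration since $\spaceb$ is cofibrant): part~(1) yields that $\emptyset\times\spaced\to \spaceb\times\spaced$ is a cofibration, and $\emptyset\times\spaced=\emptyset$, so the map from the initial object to $\spaceb\times\spaced$ is a cofibration, i.e.\ $\spaceb\times\spaced$ is cofibrant.

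There is no serious obstacle here; the entire content of the corollary is the observation that the pushout-product axiom degenerates, under the substitution of $\emptyset$ for one of the sources, to the assertions about products with cofibrant spaces. The only thing to be careful about is the identification $\spacea\times\emptyset=\emptyset$, which is valid in the category of (weak Hausdorff) compactly generated spaces since binary product distributes over the initial object there.
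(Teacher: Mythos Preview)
Your proposal is correct and is precisely the intended derivation: the paper presents this corollary without proof, immediately after the pushout-product statement in Proposition~\ref{prop:modelcatfact}(3), and your argument---substituting $\emptyset$ for $\spacec$ (respectively for $\spacea$) so that the pushout collapses---is the standard unpacking of that implication.
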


We will use several special Reedy categories in passing.  These are diagram categories which are well-behaved with respect to model categories.  More details about this tool can be found in Chapter~15 of~\cite{Hirschhorn:MCL}
\begin{defi}\label{defi:direct}
Let $\nN$ denote the category whose objects are natural numbers and where there is a unique morphism $\itm\to\itn$ if $\itm\le\itn$. 
A (small) {\em direct category} is a small category equipped with a functor to $\nN$ that takes nonidentity morphisms to nonidentity morphisms.  
\end{defi}
\begin{defi}
A {\em nearly direct category} is a small category $\category$ equipped with a direct subcategory so that the complement of the direct subcategory contains at most one morphism and is closed under composition.  For example, any direct category is nearly direct.
\end{defi}
\begin{prop}\label{prop:reedyprop}[\cite{Hirschhorn:MCL}, Theorems 15.3.4 and 15.10.9]
Let $\category$ be a model category and $\diagram$ a nearly direct category.  Then the category $\category^\diagram$ of $\diagram$-diagrams in $\category$ has the structure of a model category where the weak equivalences are objectwise weak equivalences of diagrams and cofibrant objects are diagrams with every object cofibrant in $\category$ and every morphism in the direct subcategory a cofibration.  If $\diagram$ is a direct category, then the fibrations are objectwise fibrations.

In either case, the colimit functor $\category^\diagram\to\category$ takes cofibrations and weak equivalences between cofibrant objects to cofibrations and weak equivalences in $\category$.
\end{prop}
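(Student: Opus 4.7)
The plan is to invoke Hirschhorn's Reedy model category machinery, since the statement is essentially a specialization of standard results in that theory. First I would handle the case when $\diagram$ is direct (which is the key case for all uses in the body of the paper), then explain the tiny extension needed for nearly direct $\diagram$, and finally derive the colimit preservation via Quillen adjunction.

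For direct $\diagram$, note that a direct category is a Reedy category whose inverse subcategory is trivial, so Theorem~15.3.4 of~\cite{Hirschhorn:MCL} endows $\category^\diagram$ with its Reedy model structure. Matching objects are all terminal since the inverse subcategory contributes nothing, so Reedy fibrations coincide with objectwise fibrations and, similarly, there is no extra content to the weak equivalence condition beyond being objectwise. For cofibrations, the Reedy condition on $\emptyset\to F$ reads that each latching map $L_d F\to F(d)$ is a cofibration in $\category$. Setting up the induction on the degree function (Definition~\ref{defi:direct}), one shows by a straightforward iteration that this is equivalent to $F(d)$ being cofibrant for every $d$ together with every morphism $d'\to d$ of $\diagram$ inducing a cofibration $F(d')\to F(d)$; the base case is $F(d)$ cofibrant at degree $0$, and the induction step uses that the latching object is built from pushouts of previously-cofibrant data.

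For nearly direct $\diagram$, let $\diagram^+\subseteq\diagram$ be the direct subcategory and let $\mu$ be the extra morphism (or ideal of morphisms) closed under composition. I would set up a Reedy structure on $\diagram$ by assigning $\mu$ degree zero in the inverse direction; then Theorem~15.10.9 of~\cite{Hirschhorn:MCL} (or a direct verification) produces the model structure on $\category^\diagram$. The characterization of cofibrant objects is unaffected because the matching object defined by $\mu$ still equals $F(d)$ at all targets where $\mu$ is absent, and at the one target where it is present the matching map carries no cofibrancy obligation for $\emptyset\to F$.

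For the colimit assertion, the point is that $\colim$ is left adjoint to the constant diagram functor $\Delta:\category\to\category^\diagram$. In both the direct and nearly direct cases, $\Delta$ takes fibrations and trivial fibrations in $\category$ to objectwise fibrations and trivial fibrations, which are Reedy fibrations. Hence $(\colim,\Delta)$ is a Quillen pair, so $\colim$ preserves cofibrations; by Ken Brown's lemma it preserves weak equivalences between cofibrant objects. The main obstacle, and really the only non-formal point, is verifying the cofibrancy characterization as stated: the condition ``every morphism in the direct subcategory a cofibration'' is stronger-looking than the Reedy latching condition, and one must justify the equivalence by a careful induction using that in the setting of topological spaces pushouts along cofibrations, coproducts of cofibrations, and sequential colimits of cofibrations are again cofibrations, so latching objects inherit cofibrancy from the individual structure maps.
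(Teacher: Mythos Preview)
The paper does not supply its own proof of this proposition; it is stated as a citation to Hirschhorn, so there is no argument in the paper to compare against. Your overall framework---identify the Reedy structure, then use the Quillen adjunction $(\colim,\Delta)$---is the standard and correct route.

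There is, however, a genuine gap in your treatment of the cofibrancy characterization, and you have the direction of the implication backwards. The condition ``every object cofibrant and every structure morphism a cofibration'' is \emph{weaker} than Reedy cofibrancy, not stronger. For the cospan $a\to c\leftarrow b$ (a direct category), take $F(a)=F(b)=F(c)=\ast$ with identity maps: every object is cofibrant and every morphism is a cofibration, but the latching map $L_cF=F(a)\sqcup F(b)\to F(c)$ is the fold $\ast\sqcup\ast\to\ast$, which is not a cofibration in spaces. So no amount of ``careful induction'' will establish the equivalence you claim; it simply fails for general direct categories. What saves the paper is that its actual applications are to the span, the telescope, and the nearly direct pushout, in each of which every object has at most one incoming non-identity morphism in the direct subcategory, so latching objects degenerate to single values $F(d')$ and the two conditions coincide. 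Your write-up should either restrict the cofibrancy claim to such categories or replace the stated condition with the honest latching-map condition.

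A smaller point: in the nearly direct case you assert that objectwise fibrations are Reedy fibrations, which is false once there is a nontrivial inverse morphism (the matching condition at its source is extra). What is true, and what you need, is that the constant-diagram functor $\Delta$ still lands in Reedy fibrations: for a constant diagram the matching map at the one object with an inverse arrow is an isomorphism, so the relative matching map for $\Delta X\to\Delta Y$ is again an isomorphism, hence a fibration. State it that way and the Quillen adjunction argument goes through cleanly.
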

\begin{lemma}\label{lemma:cubereedy}
The following are direct categories:
\begin{enumerate}
\item The deleted $\finiteset$-cube category $\Scube$ (see Definition~\ref{defi:scube})
\item The pushout category $\bullet\gets\bullet\to\bullet$ (this is the deleted $\{0,1\}$-cube category).
\item The telescope category $\bullet\to\bullet\to\bullet\to\cdots$
\end{enumerate}
The pushout category is also a nearly direct category, with direct subcategory missing one of the two non-identity morphisms.
\end{lemma}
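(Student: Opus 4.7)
The plan is to exhibit the required functors to $\nN$ in each case, and then verify the nearly direct structure by inspection. Recall that per Definition~\ref{defi:direct} a direct structure is just a functor $\category\to\nN$ sending nonidentity morphisms to nonidentity morphisms.

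For item (1), I would send each proper subset $\finitesubset\subset \finiteset$ to its cardinality $|\finitesubset|$. The morphisms in $\Scube$ are proper inclusions (plus identities), and a proper inclusion $\finitesubset\subsetneq\finitesubset'$ strictly increases cardinality, so the induced map in $\nN$ is a nonidentity morphism. For item (2), note that the pushout category $\bullet\gets\bullet\to\bullet$ can be identified with $\Scube$ for $\finiteset=\{0,1\}$: proper subsets of $\{0,1\}$ are $\emptyset,\{0\},\{1\}$, and the inclusions $\emptyset\hookrightarrow\{0\}$ and $\emptyset\hookrightarrow\{1\}$ are precisely the two non-identity morphisms of the pushout shape. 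So this is a special case of (1). For item (3), the telescope category comes equipped with a tautological functor to $\nN$ (each object $n$ maps to $n$), and the unique non-identity morphism from $\itm$ to $\itn$ (with $\itm<\itn$) goes to the unique such morphism in $\nN$, which is a nonidentity morphism.

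For the final claim on the pushout category, I would take the direct subcategory to consist of all identities together with only one of the two non-identity morphisms (say $\bullet\gets\bullet$). This remains a direct category under the same functor as in (2). Its complement in the pushout category consists of a single morphism (the other leg $\bullet\to\bullet$), and closure under composition is trivial because the only compositions available with that morphism involve identities, which yield the morphism itself. Thus the pushout category is nearly direct.

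No real obstacle is anticipated; the statement is essentially a bookkeeping verification that standard diagram shapes admit the degree functions required by the Reedy/nearly-direct machinery invoked earlier via Proposition~\ref{prop:reedyprop}. The only subtlety worth noting in the writeup is that in item (2) one must choose which of the two morphisms lies in the direct subcategory, and confirm that the omitted morphism individually (not both simultaneously) is what is required by the definition of a nearly direct category.
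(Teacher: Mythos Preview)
Your proposal is correct; the paper states this lemma without proof, treating it as an immediate verification, and your argument supplies exactly the obvious degree functions and checks that one would use to fill in the details.
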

\begin{defi}
Let $\diagram$ be the pushout category, considered as a nearly direct category.  Let $\firstobject$ be a pushout diagram in a model category $\category$.
\begin{enumerate}
\item A {\em model of $\firstobject$} is a diagram $\replacement{\firstobject}$ equipped with a morphism $\replacement{\firstobject}\to\firstobject$ which is a weak equivalence in $\category^\diagram$ (an objectwise weak equivalence).  Its {\em realization} is its colimit.
\item A {\em cofibrant model} of $\firstobject$ is a model of $\firstobject$ that is cofibrant in $\category^\diagram$. 
\item A {\em homotopically-correct model} of $\firstobject$ is a model of $\firstobject$ which accepts a weak equivalence from a cofibrant model of $\firstobject$ that passes to a weak equivalence of realizations. The realization of a homotopically correct model of $\firstobject$ is a {\em realization of the homotopy pushout} of $\firstobject$.
\item A {\em model of the homotopy pushout} of $\firstobject$ is an object of the category $\category$ equipped with a weak equivalence to a realization of the homotopy pushout of $\firstobject$.
\end{enumerate}
\end{defi}
\begin{lemma}
Let $\firstobject\to\firstobject'$ be a weak equivalence of pushout diagrams in a model category.  
\begin{enumerate}
\item There is a zigzag of weak equivalences over $\firstobject$ between two cofibrant models of $\firstobject$ inducing weak equivalences of their realizations.
\item There is a zigzag of weak equivalences over $\firstobject'$ between any cofibrant model of $\firstobject$ and any cofibrant model of $\firstobject'$.
\item There is a zigzag of weak equivalences between any pair of models of the homotopy pushouts of $\firstobject$ and $\firstobject'$.
\end{enumerate}
\end{lemma}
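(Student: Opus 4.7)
The plan is to reduce the three statements to standard model-category arguments applied inside the Reedy model structure on the pushout-diagram category provided by Proposition~\ref{prop:reedyprop}. Throughout I will work in $\modelcat^\diagram$ where $\diagram$ is the pushout category, viewed as a nearly direct category; cofibrant objects there are exactly the cofibrant models, and the colimit functor takes weak equivalences between cofibrant objects to weak equivalences in $\modelcat$.

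For part (1), given two cofibrant models $\replacement{\firstobject}_1\to\firstobject$ and $\replacement{\firstobject}_2\to\firstobject$, I would form their coproduct in $\modelcat^\diagram$ (which is again cofibrant) and factor the induced map $\replacement{\firstobject}_1\sqcup\replacement{\firstobject}_2\to\firstobject$ using the model category axiom as a cofibration $\replacement{\firstobject}_1\sqcup\replacement{\firstobject}_2\hookrightarrow\replacement{\firstobject}_{12}$ followed by a trivial fibration $\replacement{\firstobject}_{12}\to\firstobject$. Then $\replacement{\firstobject}_{12}$ is cofibrant (pushout of cofibrations along a cofibration, or simply because the map from the initial object is a composition of cofibrations), and the canonical inclusions $\replacement{\firstobject}_i\to\replacement{\firstobject}_{12}$ are weak equivalences by two-out-of-three against the trivial fibration to $\firstobject$. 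This is the required zigzag over $\firstobject$, and applying $\colim$ yields weak equivalences of realizations by the second sentence of Proposition~\ref{prop:reedyprop}.

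For part (2), note that if $\replacement{\firstobject}\to\firstobject$ is a cofibrant model, then the composition $\replacement{\firstobject}\to\firstobject\to\firstobject'$ is a weak equivalence by two-out-of-three applied objectwise, so $\replacement{\firstobject}$ is also a cofibrant model of $\firstobject'$. Part (1) applied to $\firstobject'$ then supplies a zigzag over $\firstobject'$ connecting this with any other cofibrant model of $\firstobject'$. For part (3), any model $P$ of the homotopy pushout of $\firstobject$ comes by definition with a weak equivalence $P\to\oprealization$ where $\oprealization$ is the realization of some cofibrant model of $\firstobject$; likewise for $P'$ and $\firstobject'$. Part (2) provides a zigzag of cofibrant models, and applying $\colim$ produces a zigzag of realizations that can be spliced with the two structural weak equivalences $P\to\oprealization$ and $P'\to\oprealization'$.

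The only point requiring some care is verifying that in part (1) the intermediate object $\replacement{\firstobject}_{12}$ is genuinely cofibrant in $\modelcat^\diagram$ (so that Proposition~\ref{prop:reedyprop} applies to pass weak equivalences to colimits), but this is immediate because a cofibration out of a cofibrant object has cofibrant target. Thus there is no real obstacle; the lemma is essentially a bookkeeping exercise that repackages the universal properties of cofibrant replacements in the Reedy model structure.
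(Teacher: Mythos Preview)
Your argument is correct and follows the same overall strategy as the paper: work in the Reedy model structure on $\modelcat^\diagram$ from Proposition~\ref{prop:reedyprop}, produce an intermediate cofibrant object receiving weak equivalences from both given cofibrant models, and then invoke that $\colim$ preserves weak equivalences between cofibrant objects. Parts~(2) and~(3) are handled identically in both.

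The only difference is in how the intermediate object in part~(1) is built. The paper factors \emph{one} of the two structure maps, say $\widetilde{\firstobject}\to\firstobject$, as a trivial cofibration followed by a trivial fibration, and then uses the left lifting property of the cofibrant object $\widetilde{\firstobject}'$ against that trivial fibration to obtain the second leg of the span. You instead pass through the coproduct $\replacement{\firstobject}_1\sqcup\replacement{\firstobject}_2$ and factor the induced map to $\firstobject$. Both constructions yield a length-two zigzag over $\firstobject$ with cofibrant middle term; yours avoids an explicit lifting step at the cost of invoking that coproducts of cofibrant objects are cofibrant, while the paper's avoids the coproduct at the cost of an explicit lift. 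Neither buys anything substantial over the other here.
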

\begin{proof}
\begin{enumerate}
\item Let $\widetilde{\firstobject}$ and $\widetilde{\firstobject}'$ be two cofibrant models of $\firstobject$.  Factorize $\widetilde{\firstobject}\to\firstobject$ as a trivial cofibration followed by a trivial fibration.  The intermediate diagram is cofibrant and there is a lift from $\widetilde{\firstobject}'$ into it over $\firstobject$.  Everything is a weak equivalence by the two-of-three axiom and the weak equivalences pass to realizations by Proposition~\ref{prop:reedyprop}.
\item Let $\widetilde{\firstobject}$ and $\widetilde{\firstobject}'$ be the two cofibrant models.  Then $\widetilde{\firstobject}$ is also a cofibrant model of $\firstobject'$.  Factorize as before.
\item This follows from the first parts of the lemma and the definition.
\end{enumerate}
\end{proof}
%
%
%
\section{Trees}\label{appendix:trees}
We fix the conventions used for trees.
\begin{defi}
Let $\finiteset$ be a nonempty finite set. A {\em non-planar unreduced rooted $\finiteset$-tree} $\tree$ (or, for notational ease, just a {\em tree}) consists of the following data:
\begin{enumerate}
\item a finite set $\verticesof(\tree)$ of {\em vertices} and
\item a finite set $\edgesof(\tree)$ of {\em edges} the elements of which are sets of two elements in $\verticesof(\tree)\sqcup\finiteset$ along with a single special edge, called the {\em root edge}, which is a singleton from $\verticesof(\tree)\sqcup\finiteset$.
\end{enumerate}
The set $\finiteset$ will be called the set of {\em leaves} of $\tree$. The set of {\em nodes} denotes the disjoint union of the vertices and the leaves. The edges other than the root edge are {\em proper}.

This data should satisfy the following conditions.
\begin{enumerate}
\item (connectedness and simply-connectedness) Given any pair $\element$ and $\element'$ of distinct nodes, there is a unique sequence of distinct proper edges $\edge_0,\ldots,\edge_\itn$ so that $\element$ is a member of $\edge_0$, $\element'$ is an element of $\edge_\itn$, and $\edge_{i-1}\cap \edge_{i}$ is nonempty for $i\in\{1,\ldots, n\}$. Such a sequence is called a {\em path} from $\element$ to $\element'$.
\item The leaves are elements of precisely one edge each and each vertex is an element of at least two edges.
\end{enumerate}
Each node $\element$ other than the root has a unique {\em output edge} which is the first edge $\edge_0$ in the path from $\element$ to the root. The {\em successor vertex} to $\element$ is the other element of the output edge of $\element$. Each proper edge $\{\element, \element'\}$ is the output edge of one of its constituent nodes; we call that node the {\em input} of the edge and the other node the {\em output} of the edge. By convention, the root edge is the output edge of the root. There is a partial order on nodes and/or edges generated by the successor relation. So the node $\element$ is {\em above} the node ($\element'$ or the edge $\edge$) if its path to the root passes through $\element'$ ($\edge$). Similarly, an edge is above its output vertex and all edges and nodes below that output vertex.

Edges which contain a leaf and the root edge are called {\em external} edges and all other edges are called {\em internal} edges. A tree with one vertex is called a {\em corolla}. A vertex which is an element of precisely two edges is called {\em bivalent}.

Let $\extedges(\tree)$ and $\intedges(\tree)$ denote the external and internal edges of $\tree$, and let $\edgesof(\vertex)$ (similarly $\extedges(\vertex), \intedges(\vertex)$) denote the edges which have $\vertex$ as output.  
\end{defi}
\begin{defi}
A tree is {\em stable} (also called {\em reduced} in the literature) if it has no bivalent vertices.
\\
A tree is {\em nearly stable} if each bivalent vertex either is itself the root of the tree or shares an edge with a leaf.
\end{defi}
As defined, there is no set of $\finiteset$-trees, so we pass to the set of isomorphism classes of trees. Since $\finiteset$-trees do not have automorphisms, this causes no difficulties. Choose a representative of each isomorphism class once and for all. Any tree we create will be canonically isomorphic to one such representative, and we suppress all such isomorphisms and set theoretical issues.

Now call the set of (representative) stable $\finiteset$-trees $\treesone{\finiteset}$ and of stable $\finiteset$-trees with $\itb$ vertices $\trees{\finiteset}{\itb}$.

\begin{defi}
Let $\tree$ be a tree and $\edge$ an edge.  Create a new tree whose leaves are the same as the leaves of $\tree$ and whose vertices are the vertices of $\tree$, except adjoin a new vertex $\element'$. The edges of the new tree are the edges of $\tree$ except we remove the chosen edge $\edge$ and replace it with two edges. If the original edge was proper, say, $\{\element,\element''\}$, then we replace it with $\{\element,\element'\}$ and $\{\element',\element''\}$. If the original edge was the root edge $\{\element\}$ then we replace it with $\{\element,\element'\}$ and $\{\element'\}$. This new tree is the tree obtained from $\tree$ by {\em inserting a vertex} on $\edge$. 
\end{defi}
\begin{defi}
Let $\tree$ be a tree and $\edge=\{\vertex,\vertextwo\}$ an internal edge of $\tree$.  The tree $\tree_\edge$ obtained by {\em contracting} the edge $\edge$ is as follows. Its leaves are the same as those of $\tree$, and its vertices are the quotient of the vertex set of $\tree$ by the equivalence relation $\vertex=\vertextwo$. The edges of $\tree_\edge$ are the edges of $\tree$, except for $\edge$ itself (called the {\em contraction edge}), subject to the equivalence relation on vertices

The quotient vertex $\{\vertex,\vertextwo\}$ is called the {\em contraction vertex}. Notationally this is the same as an edge, but there will be no confusion in practice.  We consider it to be the identification of the output and input of $\edge$.  

If a vertex $\vertex$ is an element of precisely two edges, then contracting the output edge of $\vertex$ is also called {\em forgetting the vertex} $\vertex$.

If $\someedges$ is a set of internal edges, then $\tree_\someedges$ is the tree obtained by contracting all the edges in $\someedges$.  The order of contraction does not matter.

The set $\treesone{\finiteset}$ form the objects of a (small) category where the morphisms out of a tree $\tree$ are indexed by sets of internal edges: $\someedges$ is a morphism from $\tree$ to $\tree_\someedges$.  Composition uses the identification above to define a bigger contraction set. 
\end{defi}
\begin{defi}
Let $\tree$ and $\tree'$ be $\finiteset$ and $\finiteset'$-trees.  Let $\leaf$ be a leaf of $\tree$, and let $\mapofsets:\finiteset'\sqcup \finiteset\backslash\{\leaf\}\to \finiteset''$ be a relabelling isomorphism.  Then the $\finiteset''$-tree obtained by {\em grafting} $\tree'$ to $\tree$ at the leaf $\leaf$ is as follows.
\begin{enumerate}
\item The vertex set is equal to the disjoint union of the vertices of $\tree$ and the vertices of $\tree'$.
\item The edge set is equal to the disjoint union of the edges of $\tree$ and $\tree'$ without the unique edge containing $\leaf$ and the root edge of $\tree'$, but with a new {\em grafting edge}. This new edge has as its elements the root of $\tree'$ and the successor of $\leaf$ in $\tree$ if it exists (if $\leaf$ is the root of $\tree$ then it will not have a successor).
\end{enumerate}
\end{defi}
%
%
\section{Proof of Proposition~\ref{prop:homotopyconstruction}}\label{appendix:homotopy} 
Proposition~\ref{prop:homotopyconstruction} says that the homotopy $\homotopy$ lands in $\mbarthreetop_\tree$.  The content of this proposition is that the homotopy preserves the configuration in which disks are nested in one another.  That is, if $\diskone_0$ and $\diskone_1$ are properly overlapping disks (and likewise $\disktwo_0$ and $\disktwo_1$), then:
  \begin{enumerate}
  \item If $\diskone_\iti$ is contained in $\disktwo_\iti$ for $\iti\in\{0,1\}$, then for all $\timez$, the disk which is the image of the pair $\diskone_\iti$ under the homotopy slice $\homotopy_\timez$ is contained in the disk which is the image of the pair $\disktwo_\iti$ under the homotopy slice $\homotopy_\timez$.
  \item If $\diskone_\iti$ and $\disktwo_\iti$ are disjoint for $\iti\in\{0,1\}$, then for all $\timez$, the disks which are the images under the homotopy slice $\homotopy_\timez$ of the pairs $\diskone_\iti$ and $\disktwo_\iti$ are disjoint.
  \end{enumerate}
These two statements are, respectively, Corollaries~\ref{cor:fullnested}~and~\ref{cor:fullvenn}, and by proving them, we prove the proposition.

\begin{lemma}\label{lemma:randccalculations}
The pair $\homotopypart_\timez(\cent_0,\radi_0,\cent_1,\radi_1,\iti)$ coincides with $(\cent_\iti, \radi_\iti)$ for $\iti=0,1$, justifying the notation $(\cent_\timez,\radi_\timez)$. The disk centered at $\cent_\timez$ of radius $\radi_\timez$ contains the intersection of the disks centered at $\cent_\iti$ of radius $\radi_\iti$, for $\iti\in 0,1$.
\end{lemma}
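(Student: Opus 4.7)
The first assertion, $\homotopypart_\timez(\cent_0,\radi_0,\cent_1,\radi_1,\iti) = (\cent_\iti, \radi_\iti)$ for $\iti \in \{0, 1\}$, is immediate by substitution into the formulas. At $\timez = 0$ one has $\sin(\timez\angleof) = 0$ and $\sin((1-\timez)\angleof) = \sin\angleof$, which is nonzero unless $\angleof = 0$ (and that edge case is handled separately by the explicit linear formulas). The weighted-average formulas then collapse to $(\cent_0, \radi_0)$; the case $\timez = 1$ is symmetric.

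For the containment $D_0 \cap D_1 \subseteq D_\timez$, where $D_\iti$ denotes the closed disk with center $\cent_\iti$ and radius $\radi_\iti$, my plan is to exhibit $\lambda_0, \lambda_1 \geq 0$ with $\lambda_0 + \lambda_1 = 1$ such that
\begin{equation*}
|z - \cent_\timez|^2 - \radi_\timez^2 = \lambda_0\bigl(|z - \cent_0|^2 - \radi_0^2\bigr) + \lambda_1\bigl(|z - \cent_1|^2 - \radi_1^2\bigr) \qquad \text{for all } z \in \cC.
\end{equation*}
Any $z \in D_0 \cap D_1$ makes both summands on the right nonpositive, so the left side is nonpositive too, giving $z \in D_\timez$. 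Matching coefficients of $|z|^2$ and of $-2\operatorname{Re}(\bar z\,\cdot)$ on both sides forces $\lambda_0 + \lambda_1 = 1$ and $\cent_\timez = \lambda_0\cent_0 + \lambda_1\cent_1$; combined with the definition of $\cent_\timez$, this dictates
\begin{equation*}
\lambda_0 = \frac{\radi_1\sin((1-\timez)\angleof)}{\radi_0\sin(\timez\angleof) + \radi_1\sin((1-\timez)\angleof)}, \qquad \lambda_1 = 1 - \lambda_0.
\end{equation*}
For real $\angleof \in [0, \pi)$ these are manifestly in $[0,1]$; for pure-imaginary $\angleof = ir$, the substitution $\sin(ix) = i\sinh x$ cancels the factors of $i$ in the ratio, again yielding values in $[0,1]$; and for $\angleof = 0$ the linear formulas give $\lambda_0 = \radi_1(1-\timez)/(\radi_0\timez + \radi_1(1-\timez))$.

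It then remains to match the constant term, $|\cent_\timez|^2 - \radi_\timez^2 = \lambda_0(|\cent_0|^2 - \radi_0^2) + \lambda_1(|\cent_1|^2 - \radi_1^2)$. Since $\homotopypart$ is affine-equivariant in $(\cent_0, \cent_1)$ (a direct check from the formulas), one may reduce to $\cent_0 = 0$, $\cent_1 = d \geq 0$ real. Clearing denominators and invoking the trigonometric identity
\begin{equation*}
\sin^2\alpha + \sin^2\beta + 2\sin\alpha\sin\beta\cos(\alpha+\beta) = \sin^2(\alpha+\beta),
\end{equation*}
valid for complex arguments (hence in the hyperbolic regime as well), with $\alpha = \timez\angleof$ and $\beta = (1-\timez)\angleof$, the identity collapses to $2\radi_0\radi_1\cos\angleof = \radi_0^2 + \radi_1^2 - d^2$, which is precisely the definition of $\angleof$. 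The main obstacle is this constant-term calculation: it is elementary algebra but requires careful bookkeeping and the trig identity above. An appealing feature of the approach is that it handles all three regimes (intersecting, tangent, and nested disks) uniformly, obviating a case analysis.
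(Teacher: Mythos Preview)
Your argument is correct. Both you and the paper ultimately rely on the same trigonometric identity $\sin^2\alpha + \sin^2\beta + 2\sin\alpha\sin\beta\cos(\alpha+\beta) = \sin^2(\alpha+\beta)$, but the framing differs. The paper picks an arbitrary point $\varone$ in the intersection, applies Stewart's theorem (the law of cosines on the cevian through $\cent_\timez$ in the triangle $\cent_0\cent_1\varone$) to express $|\cent_\timez-\varone|^2$ as a weighted average of the $|\cent_\iti-\varone|^2$ minus a correction term, and then bounds directly. Your approach is more structural: you recognize that the one-parameter family $D_\timez$ lies in the \emph{pencil} generated by $D_0$ and $D_1$, i.e., that the power function $|z-\cent_\timez|^2-\radi_\timez^2$ is a convex combination of the power functions of $D_0$ and $D_1$, so containment of the intersection is immediate once the convex coefficients $\lambda_\iti$ are verified nonnegative. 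This is arguably cleaner --- it separates the geometric content (pencil membership) from the analytic content (sign of $\lambda_\iti$) --- and avoids the paper's case split on whether $\cent_0,\cent_1,\varone$ are distinct. Under the hood the two computations are equivalent, since Stewart's theorem is precisely the identity $|z-\cent_\timez|^2 = \lambda_0|z-\cent_0|^2 + \lambda_1|z-\cent_1|^2 - \lambda_0\lambda_1|\cent_0-\cent_1|^2$ for $\cent_\timez$ a convex combination of $\cent_0,\cent_1$.
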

\begin{proof}
The first statement follows immediately from the definition.
Let $\varone$ be a point in the intersection.  We will prove the second statement in the case that $\cent_0$, $\cent_1$, and $\varone$ are disjoint; the other cases are easier.  In this case, there is a (possibly degenerate) triangle with corners $\cent_0$, $\cent_1$, and $\varone$.  The point $\cent_\timez$ is on the segment between $\cent_0$ and $\cent_1$.  Repeated applications of the law of cosines and substitution show that the distance from $\cent_\timez$ to $\varone$ satisfies
\[
|\cent_\timez-\varone|^2=\frac{|\cent_0-\varone|^2|\cent_1-\cent_\timez|+|\cent_1-\varone|^2|\cent_0-\cent_\timez|}{|\cent_0-\cent_1|}-|\cent_0-\cent_\timez||\cent_1-\cent_\timez|.
\]
Using the facts that $|\cent_0-\varone|\le \radi_0$ and $|\cent_1-\varone|\le \radi_1$, the definitions of $\angleof$ and $\cent_\timez$, and trigometric identities, we can use this to generate the inequality
\[
|\cent_\timez-\varone|^2\le \radi_\timez^2\left(2\cos\angleof \sin(\timez\angleof)\sin((1-\timez)\angleof)+\sin^2(\timez\angleof)+\sin^2((1-\timez)\angleof)\right)=\radi_\timez^2\sin^2\angleof,\]
which is less than or equal to $\radi_\timez^2$.
\end{proof}
\begin{lemma}\label{lemma:halves}Let $|\cent_0-\cent_1|<\radi_0+\radi_1$.  Then: 
\begin{enumerate}
\item $\cent_\frac{1}{2}=\frac{\cent_1\radi_0+\cent_0\radi_1}{\radi_0+\radi_1}$,
\item $\radi_\frac{1}{2}=\frac{2\radi_0\radi_1\cos(\frac{\angleof}{2})}{\radi_0+\radi_1}=\frac{\sqrt{\radi_0\radi_1}\sqrt{(\radi_0+\radi_1)^2-|\cent_0-\cent_1|^2}}{\radi_0+\radi_1}$, 
\item $\angleof(\cent_0,\cent_\frac{1}{2},\radi_0,\radi_\frac{1}{2})=\frac{\angleof}{2}=\angleof(\cent_\frac{1}{2},\cent_1,\radi_\frac{1}{2},\radi_1)$, and
\item $\homotopypart(\cent_0,\cent_\frac{1}{2},\radi_0,\radi_\frac{1}{2},2\timez)=\homotopypart(\cent_0,\cent_1,\radi_0,\radi_1,\timez)=
\homotopypart(\cent_\frac{1}{2},\cent_1,\radi_\frac{1}{2},\radi_1,2\timez-1).$
\end{enumerate}
\end{lemma}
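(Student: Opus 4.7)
The plan is a direct trigonometric calculation in four steps, using the half-angle and product-to-sum identities together with the specific form of $\angleof$. For items (1) and (2), I would just substitute $\timez=\frac{1}{2}$ into the definitions of $\cent_\timez$ and $\radi_\timez$. Since $\sin\bigl(\frac{1}{2}\angleof\bigr)=\sin\bigl((1-\frac{1}{2})\angleof\bigr)$ appears in every term of both numerator and denominator of $\cent_{\frac{1}{2}}$, it cancels, leaving the weighted-average formula. For $\radi_{\frac{1}{2}}$, the double angle identity $\sin\angleof=2\sin(\angleof/2)\cos(\angleof/2)$ yields the cosine form, and the half-angle identity $2\cos^2(\angleof/2)=1+\cos\angleof$, combined with the definition of $\angleof$ so that $1+\cos\angleof=\frac{(\radi_0+\radi_1)^2-|\cent_0-\cent_1|^2}{2\radi_0\radi_1}$, converts this to the radical form.

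For (3), I would compute $|\cent_0-\cent_{\frac{1}{2}}|$ directly from the formula in (1), getting the clean expression $\frac{\radi_0|\cent_0-\cent_1|}{\radi_0+\radi_1}$. Plugging this and the value of $\radi_{\frac{1}{2}}$ into the definition $\angleof(\cent_0,\cent_{\frac{1}{2}},\radi_0,\radi_{\frac{1}{2}})=\arccos\bigl(\frac{\radi_0^2+\radi_{\frac{1}{2}}^2-|\cent_0-\cent_{\frac{1}{2}}|^2}{2\radi_0\radi_{\frac{1}{2}}}\bigr)$ and simplifying the fractions yields the ratio $\frac{\sqrt{(\radi_0+\radi_1)^2-|\cent_0-\cent_1|^2}}{2\sqrt{\radi_0\radi_1}}$, which by the radical form from (2) is exactly $\cos(\angleof/2)$. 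Since $\arccos$ is monotonic on the relevant domain, we get $\angleof/2$. The second equality of (3) is proved by the same argument with the roles of the two disks exchanged.

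For (4), I would substitute items (1)--(3) into the definition of $\homotopypart$ evaluated at parameter $2\timez$ on the pair $(\cent_0,\cent_{\frac{1}{2}},\radi_0,\radi_{\frac{1}{2}})$. The result will involve $\sin(\timez\angleof)$, $\sin((\frac{1}{2}-\timez)\angleof)$, and the factor $\cos(\angleof/2)$ inside $\radi_{\frac{1}{2}}$. The key identity is the product-to-sum formula $2\cos(\angleof/2)\sin((\frac{1}{2}-\timez)\angleof)=\sin((1-\timez)\angleof)-\sin(\timez\angleof)$. Applying this inside both numerator and denominator causes cancellation of the $\sin(\timez\angleof)$ terms and recovery of the original expressions for $\cent_\timez$ and $\radi_\timez$; a common factor of $\radi_0/(\radi_0+\radi_1)$ drops out. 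The rightmost equality in (4) is proved identically by a change of variable $\timez\mapsto 1-\timez$ combined with the symmetric case.

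The main obstacle is not conceptual but purely bookkeeping: keeping all the ratios and trigonometric manipulations organized so that the final cancellations are visible. One also has to treat the degenerate cases separately. When $\angleof=0$, one uses the piecewise formula from the definition; continuity of both sides in the data (via the squeeze argument already noted in the excerpt) lets the identities pass to the limit. When $\angleof=\realvariable\imaginaryunit$ is pure imaginary, the identities used are all formal trigonometric identities that hold on all of $\cC$, so the same calculation applies verbatim (with $\sin$ and $\cos$ implicitly becoming $i\sinh$ and $\cosh$ on the imaginary axis).
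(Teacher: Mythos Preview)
Your proposal is correct and is exactly the approach the paper intends: the paper's entire proof is the single sentence ``These are all straightforward evaluations, with some trigonometric substitutions,'' and your write-up simply carries out those substitutions explicitly (the half-angle, double-angle, and product-to-sum identities you name are precisely the ones needed).
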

These are all straightforward evaluations, with some trigonometric substitutions.
\begin{defi}Let $\diskone$ and $\disktwo$ be disks.  The {\em nesting distance} $\nest{\diskone}{\disktwo}$ is $\radi_\disktwo-\radi_\diskone-|\cent_\diskone-\cent_\disktwo|$.  This quantity is nonnegative (positive) if $\diskone$ is (properly) nested in $\disktwo$; in this case it is the distance from the interior of $\diskone$ to the exterior of $\disktwo$. 
\end{defi}
\begin{remark*}
By the triangle inequality the nesting distance is superadditive; that is, $\nest{\diskone}{\diskthree}\ge \nest{\diskone}{\disktwo}+\nest{\disktwo}{\diskthree}$.
\end{remark*}
\begin{notation}
If $\diskone_0=(\cent_0,\radi_0)$ and $\diskone_1=(\cent_1,\radi_1)$ are properly overlapping disks, then we will use $\diskone_\timez$ to refer to the disk $(\cent_\timez,\radi_\timez)$ determined from these two by $\homotopypart$.
\end{notation}
\begin{prop}\label{prop:spacing}
Let $\{\diskone_0,\diskone_1\}$ and $\{\disktwo_0,\disktwo_1\}$ be two pairs of properly overlapping disks.  Suppose $\diskone_\iti$ is nested in $\disktwo_\iti$ for $\iti\in\{0,1\}$.  Then $\diskone_{\frac{1}{2}}$ is nested in $\disktwo_{\frac{1}{2}}$, and, in fact, $\nest{\diskone_{\frac{1}{2}}}{\disktwo_{\frac{1}{2}}}$ is greater than or equal to the minimum of $\nest{\diskone_0}{\disktwo_0}$ and $\nest{\diskone_1}{\disktwo_1}$.  Furthermore, $\nest{\diskone_{\frac{1}{2}}}{\disktwo_{\frac{1}{2}}}$ can only be zero if {\em both} $\nest{\diskone_0}{\disktwo_0}$ and $\nest{\diskone_1}{\disktwo_1}$ are zero.
\end{prop}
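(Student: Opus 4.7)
Plan.
The proposition is an explicit inequality on the midpoint data from Lemma~\ref{lemma:halves}, and I would verify it by normalization, reduction to a concentric configuration, and elementary estimates.

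First, I would apply the $\affc$-equivariance of $\homotopypart$ simultaneously to both pairs, which preserves all nesting distances and the proper-overlap hypothesis. This normalizes $\cent_{\diskone_0} = 0$ and $\cent_{\diskone_1} \in \mathbb{R}_{>0}$, at which point Lemma~\ref{lemma:halves} gives clean closed-form expressions for $\cent_{\diskone_{\frac{1}{2}}}, \radi_{\diskone_{\frac{1}{2}}}$ (and similarly for $\disktwo$). Setting $M := \nestsbase$, the goal becomes $\nesthalf \geq M$, with equality forcing both base nesting distances to vanish.

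Second, I would reduce to the concentric case. For each $\iti \in \{0,1\}$, let $\disktwo^\circ_\iti$ denote the disk with center $\cent_{\diskone_\iti}$ and radius $\radi_{\diskone_\iti} + \nest{\diskone_\iti}{\disktwo_\iti}$, so that $\diskone_\iti \subseteq \disktwo^\circ_\iti \subseteq \disktwo_\iti$ and $\nest{\diskone_\iti}{\disktwo^\circ_\iti} = \nest{\diskone_\iti}{\disktwo_\iti}$. The technical heart is the monotonicity claim
\[
\nesthalf \;\geq\; \nest{\diskone_{\frac{1}{2}}}{\disktwo^\circ_{\frac{1}{2}}},
\]
which reduces the general statement to its concentric counterpart. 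Its proof compares the effect of translating $\cent_{\disktwo_\iti}$ away from $\cent_{\diskone_\iti}$ (while enlarging $\radi_{\disktwo_\iti}$ to preserve the nesting distance): the resulting growth of $\radi_{\disktwo_{\frac{1}{2}}}$ should dominate the worst-case growth of $|\cent_{\diskone_{\frac{1}{2}}} - \cent_{\disktwo_{\frac{1}{2}}}|$ estimated via the triangle inequality on the rational expression for $\cent_{\disktwo_{\frac{1}{2}}}$ in Lemma~\ref{lemma:halves}.

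Third, in the concentric case both $\cent_{\diskone_{\frac{1}{2}}}$ and $\cent_{\disktwo^\circ_{\frac{1}{2}}}$ lie on the segment between $\cent_{\diskone_0}$ and $\cent_{\diskone_1}$, so $|\cent_{\diskone_{\frac{1}{2}}} - \cent_{\disktwo^\circ_{\frac{1}{2}}}|$ is an explicit rational function of the radii and nesting distances. The inequality then reduces to an AM--GM style estimate, essentially
\[
\sqrt{(\radi_{\diskone_0}+\nest{\diskone_0}{\disktwo_0})(\radi_{\diskone_1}+\nest{\diskone_1}{\disktwo_1})} \;\geq\; \sqrt{\radi_{\diskone_0}\radi_{\diskone_1}} + M,
\]
modulated by the overlap factor $\sqrt{(\radi_{\diskone_0}+\radi_{\diskone_1})^2 - |\cent_{\diskone_0}-\cent_{\diskone_1}|^2}$ appearing in the radius formula. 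This estimate is strict whenever at least one base nesting distance is positive (by tracing the tightness conditions of AM--GM), which after reversing the reduction yields the ``furthermore'' clause.

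The main obstacle will be the monotonicity reduction in the second stage: $\nesthalf$ depends nonlinearly on $\cent_{\disktwo_\iti}$ through the angle parameter $\angleof$, and the naive triangle-inequality bound on $|\cent_{\diskone_{\frac{1}{2}}} - \cent_{\disktwo_{\frac{1}{2}}}|$ may not be sharp enough on its own to deliver the monotonicity in one step. If the concentric reduction proves too finicky, the fallback is direct algebraic bashing of the inequality in the $\affc$-normalized coordinates, which remains finite-dimensional and is consistent with the author's remark that the argument is ``elementary, albeit intricate.''
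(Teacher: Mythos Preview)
Your decomposition is essentially the paper's: your intermediate disks $\disktwo^\circ_\iti$ (concentric with $\diskone_\iti$, inscribed in $\disktwo_\iti$) are precisely the paper's $\diskone'_\iti$, and the two steps you isolate---the concentric comparison and the passage from $\disktwo^\circ$ to $\disktwo$---are exactly the two lemmas the paper proves. What differs is how each step is carried out, and in both cases your sketch understates the difficulty.

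For the monotonicity step $\nest{\diskone_{\frac12}}{\disktwo_{\frac12}}\ge \nest{\diskone_{\frac12}}{\disktwo^\circ_{\frac12}}$, your proposed triangle-inequality bound on $|\cent_{\diskone_{\frac12}}-\cent_{\disktwo_{\frac12}}|$ is, as you suspect, not sharp enough: the center formula from Lemma~\ref{lemma:halves} depends on the radii in a way that couples it to the radius change, and a crude bound loses the inequality. The paper's trick is to observe that this step is equivalent to showing $\disktwo^\circ_{\frac12}\subset\disktwo_{\frac12}$, and to prove \emph{that} by inserting a further intermediate pair $\diskthree$ with $\diskthree_0=\disktwo_0$, $\diskthree_1=\disktwo^\circ_1$. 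Each of the two resulting inclusions $\disktwo^\circ_{\frac12}\subset\diskthree_{\frac12}\subset\disktwo_{\frac12}$ is an instance of a special ``tangent'' case (one pair tangent, the other coincident), handled by a separate computational lemma with its own reparameterization and single-variable optimization. This two-hop factorization is the missing idea in your reduction.

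For the concentric case, the inequality is not an AM--GM one-liner: the center $\cent_{\disktwo^\circ_{\frac12}}$ is a \emph{different} convex combination of $\cent_{\diskone_0},\cent_{\diskone_1}$ than $\cent_{\diskone_{\frac12}}$ (the weights involve the $\disktwo^\circ$ radii), so the center-displacement term does not drop out. The paper reparameterizes, differentiates a piecewise function in one variable, locates its critical points, and checks nonnegativity at each; the argument is elementary but genuinely intricate, consistent with your fallback remark. Your plan has the right architecture, but both pillars need the more careful treatments the paper supplies.
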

\begin{lemma}\label{lemma:spacingcent}
Proposition~\ref{prop:spacing} is true in the special case where $\diskone_0$ and $\disktwo_0$ have the same center, $\cent_0$, and $\diskone_1$ and $\disktwo_1$ have the same center, $\cent_1$.
\end{lemma}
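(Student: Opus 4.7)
The plan is to parametrize on the real axis and reduce to a scalar inequality. Normalize with $\cent_0 = 0$ and $\cent_1 = d \ge 0$, writing $a_i$ for the radius of $\diskone_i$, $b_i$ for the radius of $\disktwo_i$, and $\delta_i = b_i - a_i \ge 0$. Note that in this concentric-centers special case $\nest{\diskone_i}{\disktwo_i} = \delta_i$. In the concentric case $d = 0$, Lemma~\ref{lemma:halves} gives $\diskone_{\frac{1}{2}}$ and $\disktwo_{\frac{1}{2}}$ as concentric disks of radii $\sqrt{a_0 a_1}$ and $\sqrt{b_0 b_1}$, so the nesting distance is $\sqrt{b_0 b_1} - \sqrt{a_0 a_1}$. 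Assuming WLOG $\delta_0 \le \delta_1$, one has
\[
(a_0+\delta_0)(a_1+\delta_1) \ge a_0 a_1 + \delta_0(a_0+a_1) + \delta_0^2 \ge (\sqrt{a_0 a_1}+\delta_0)^2,
\]
using $\delta_1 \ge \delta_0$ and AM--GM; this yields the required $\sqrt{b_0 b_1} - \sqrt{a_0 a_1} \ge \delta_0 = \min(\delta_0,\delta_1)$, with strict positivity whenever $\max(\delta_0,\delta_1) > 0$ (checked directly by tracing strictness, or by noting $\sqrt{b_0 a_1} - \sqrt{a_0 a_1} > 0$ if $\delta_0 > 0$).

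For $d > 0$, all four input disks are invariant under reflection across the real axis, and the formulas of Lemma~\ref{lemma:halves} show this symmetry persists to $\diskone_{\frac{1}{2}}$ and $\disktwo_{\frac{1}{2}}$, placing both their centers on the real axis between $0$ and $d$. Nesting of two disks with collinear centers is equivalent to nesting of the intervals they cut out on the line of centers, so the nesting distance equals $\min(G_R, G_L)$, where $G_R$ and $G_L$ are the right and left gaps of the two intervals. The reflection $z \mapsto d-z$ simultaneously swaps $(a_0,b_0,\delta_0) \leftrightarrow (a_1,b_1,\delta_1)$ and $G_R \leftrightarrow G_L$, so it suffices to prove
\[
G_R = \frac{db_0 + \sqrt{b_0 b_1\,((b_0+b_1)^2-d^2)}}{b_0+b_1} - \frac{da_0 + \sqrt{a_0 a_1\,((a_0+a_1)^2-d^2)}}{a_0+a_1} \ge \min(\delta_0,\delta_1).
\]

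The principal obstacle is this last scalar inequality, which mixes differences of square roots with linear terms and whose partial derivatives in $b_0$ and $b_1$ individually fall below $1$, ruling out a naive rate-of-growth argument. The natural approach is to split into the cases $\delta_0 \le \delta_1$ and $\delta_1 \le \delta_0$, rationalize the difference of square roots by multiplying by its conjugate, and bound the resulting polynomial remainder using the overlap constraint $d < a_0 + a_1$, the ordering $a_i \le b_i$, and elementary inequalities. Strictness of the comparison when $\max(\delta_0,\delta_1) > 0$ then yields the ``furthermore'' strict-positivity clause of Proposition~\ref{prop:spacing}.
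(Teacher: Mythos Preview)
Your reduction is clean and correct as far as it goes: the normalization, the $d=0$ case, the observation that both midpoints lie on the real axis, the identification of the nesting distance with $\min(G_R,G_L)$, and the use of the involution $z\mapsto d-z$ to reduce to the single inequality
\[
G_R \;=\; \frac{db_0 + \sqrt{b_0 b_1\,((b_0+b_1)^2-d^2)}}{b_0+b_1} - \frac{da_0 + \sqrt{a_0 a_1\,((a_0+a_1)^2-d^2)}}{a_0+a_1} \;\ge\; \min(\delta_0,\delta_1)
\]
are all valid. This is in fact a tidier setup than the paper's, which does not exploit that symmetry and instead works directly with a five-variable function.

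However, the proof is incomplete: the displayed inequality is the entire content of the lemma in the $d>0$ case, and you do not prove it. Your final paragraph is not an argument but a description of a hoped-for argument (``split into cases, rationalize, bound the polynomial remainder''), and you yourself flag that the obvious monotonicity attacks fail. There is no reason to believe the rationalization route terminates in something tractable without actually carrying it out; the paper's proof of exactly this step is a nontrivial calculus argument. It reparametrizes in sums and differences of radii, writes the defect $g = \nest{\diskone_{1/2}}{\disktwo_{1/2}} - \min\{\delta_0,\delta_1\}$ explicitly, differentiates in one of the outer-radius variables, shows that each piecewise branch of the derivative has a single zero which is a local maximum of $g$, and thereby localizes the minimum of $g$ to the boundary of the parameter region and to one interior discontinuity, where $g\ge 0$ is then checked directly. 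The strictness clause is read off from the equality analysis at those boundary points. You would need to supply an argument of comparable force; as written, the crux of the lemma is asserted rather than proven.
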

\begin{proof}
In this case we can write 
\[
\cent(\diskone_0,\diskone_1,\frac{1}{2})-\cent(\disktwo_0,\disktwo_1,\frac{1}{2})=
\frac{(\cent_1-\cent_0) (\radi_{\diskone_0}\radi_{\disktwo_1}-\radi_{\diskone_1}\radi_{\disktwo_0})}{(\radi_{\diskone_0}+\radi_{\diskone_1})(\radi_{\disktwo_0}+\radi_{\disktwo_1})}\]
Reparameterize using the variables 
\begin{eqnarray*}
\vara&=&|\cent_0-\cent_1|\\
\varb&=&\radi_{\diskone_0}+\radi_{\diskone_1}\\
\varc&=&\radi_{\diskone_0}-\radi_{\diskone_1}\\
\vard&=&\radi_{\disktwo_0}+\radi_{\disktwo_1}\\
\vare&=&\radi_{\disktwo_0}-\radi_{\disktwo_1}.
\end{eqnarray*}
Note that $\varb$ and $\vard$ are always positive, $\vard\ge \varb>\vara$, that $-\varb<\varc<\varb$, and that $-\vard<\vare<\vard$.  In fact, $\varb+\varc-\vard\le \vare\le -\varb+\varc+\vard$.  Then we can define $\dummyfunction(\vara,\varb,\varc,\vard,\vare)=\nesthalf-\nestsbase$
and write \[
\dummyfunction=
\frac{\sqrt{\vard^2-\vare^2}\sqrt{\vard^2-\vara^2}}{2\vard}-
\frac{\sqrt{\varb^2-\varc^2}\sqrt{\varb^2-\vara^2}}{2\varb}-
\frac{\vara|\varc\vard-\varb\vare|}{2\varb\vard}+\frac{\varb-\vard}{2}+\frac{|\varc-\vare|}{2}.
\]
This is piecewise differentiable with respect to $\vare$ with derivative
\[
\dummyfunction_\vare=-\frac{\vare\sqrt{\vard^2-\vara^2}}{2\vard\sqrt{\vard^2-\vare^2}}\pm \frac{\vara}{2\vard}\pm \frac{1}{2}
\]
This is really four different functions, depending on the signs of $\frac{u}{2y}$ and $\frac{1}{2}$.   Each of the four functions is defined globally. For each of them, as $\vare$ approaches $-\vard$ $\dummyfunction_\vare$ is positive and as $\vare$ approaches $\vard$, $\dummyfunction_\vare$ is negative.  Further, each of these four globally defined functions has a single zero (at $\vare=\pm \sqrt{\frac{\vard(\vard\pm \vara)}{2}}$), which may or may not be in the appropriate domain for that choice of signs.    This means that any zero of the overall piecewise function $\dummyfunction_\vare$ is a local maximum, so local minima can only occur at boundary points and at the discontinuities of the derivative, which occur at $\vare=\varc$ and $\vare=\frac{\vard}{\varb}\varc$.  

At the discontinuity $\vare=\frac{\vard}{\varb}\varc$, say that the function changes from $\dummyfunction_{\vare,1}$ to $\dummyfunction_{\vare,2}$.  If $\varc\ne 0$, then the zero of $\dummyfunction_{\vare,1}$ is at least as great as the zero of $\dummyfunction_{\vare,2}$.  So either the global function $\dummyfunction_\vare$ increases to the left of $\vare=\frac{\vard}{\varb}\varc$, decreases to the right of it, or both, and in particular, this cannot be a minimum of $\dummyfunction$.  If $\varc=0$ then $\frac{\vard}{\varb}\varc$ coincides with $\varc$.

To summarize the progress at this stage of the proof, we have shown that holding the first four variables constant, $\dummyfunction$ achieves its minimal value for $\vare$ in the interval $[\varb+\varc-\vard,-\varb+\varc+\vard]$ at either one of the endpoints or the midpoint $\vare=\varc$.  We will show that in these minimal cases, $\dummyfunction\ge 0$.  

At the endpoints $\vare=\varc\pm(\varb-\vard)$, the function $\dummyfunction$ is continuously differentiable in $\vard$, with positive derivative.  

So for the specified endpoint values of $\vare$, $\dummyfunction$ increases as $\vard$ increases.  The minimal value of $\vard$ is at least $\varb$, and we have $\dummyfunction(\vara,\varb,\varc,\varb,\varc)=0$.

For the other critical point, $\vare=\varc$, the derivative with respect to $\vard$ is generically positive, implying that the minimal value of $\dummyfunction$ is at least zero, except when $\vare=\varc$.  In this case, direct inspection shows $\dummyfunction$ to be identitically zero.  

We have shown the desired inequality; it remains to show that $\nest{\diskone_{\frac{1}{2}}}{\disktwo_{\frac{1}{2}}}$ can only be zero if {\em both} $\nest{\diskone_0}{\disktwo_0}$ and $\nest{\diskone_1}{\disktwo_1}$ are zero.  By the definition of $\dummyfunction$, this can only happen if $\dummyfunction$ is zero and one of the two quantities $\nest{\diskone_0}{\disktwo_0}$ and $\nest{\diskone_1}{\disktwo_1}$ is zero.  This can only occur in two cases.  In the first case, $\varb=\vard$ and $\varc=\vare$.  In this case $\diskone_\iti$ coincides with $\disktwo_\iti$ for both $\iti=0$ and $\iti=1$.  In the second case, $\varc=\vare$ and $\vara=|\varc|$.  In this case, without loss of generality, if $\nest{\diskone_0}{\disktwo_0}$ is zero then $\radi_{\diskone_0}=\radi_{\disktwo_0}$ and then $\vare=\varc$ implies $\radi_{\diskone_1}=\radi_{\disktwo_1}$ so $\nest{\diskone_1}{\disktwo_1}=0$ as well.
\end{proof}
\begin{lemma}\label{lemma:spacingtang}
Proposition~\ref{prop:spacing} is true in the special case where $\diskone_0$ and $\disktwo_0$ are tangent and $\diskone_1$ and $\disktwo_1$ coincide.  The proposition is also true in the special case where $\diskone_0$ and $\disktwo_0$ coincide and $\diskone_1$ and $\disktwo_1$ are tangent.
\end{lemma}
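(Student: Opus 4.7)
The plan is to verify the required midpoint nesting by a direct computation using the explicit midpoint formulas of Lemma~\ref{lemma:halves}, exploiting the $\affc$-equivariance of $\homotopypart$. First, I observe that the two sub-cases of the lemma are interchanged by the involution $\timez\mapsto 1-\timez$, which swaps $(\cent_0,\radi_0)$ and $(\cent_1,\radi_1)$ in $\homotopypart$ while preserving its value at $\timez=\tfrac12$; so it suffices to prove the first sub-case.

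Assume therefore that $\diskone_1=\disktwo_1$ and $\diskone_0$ is internally tangent to $\disktwo_0$ with $\diskone_0\subset\disktwo_0$. By $\affc$-equivariance I normalize $\diskone_1=\disktwo_1=(0,1)$, then rotate so that $\cent_{\disktwo_0}-\cent_{\diskone_0}$ is a non-negative real number; by tangency this quantity equals $\radi_{\disktwo_0}-\radi_{\diskone_0}$. Writing $\cent_{\diskone_0}=\alpha+\imaginaryunit\beta$, $r_X=\radi_{\diskone_0}$, and $r_Y=\radi_{\disktwo_0}$, so that $\cent_{\disktwo_0}=(\alpha+r_Y-r_X)+\imaginaryunit\beta$, Lemma~\ref{lemma:halves} supplies explicit expressions for $\cent_{\diskone_{1/2}},\cent_{\disktwo_{1/2}},\radi_{\diskone_{1/2}},\radi_{\disktwo_{1/2}}$. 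A brief computation yields the clean displacement of centers
\[\cent_{\disktwo_{1/2}}-\cent_{\diskone_{1/2}}=\frac{(r_Y-r_X)\bigl((r_X+1-\alpha)-\imaginaryunit\beta\bigr)}{(r_X+1)(r_Y+1)},\]
so that after clearing denominators the inequality $\nesthalf\ge 0$ is equivalent to
\[(r_X+1)\sqrt{r_Y\bigl[(r_Y+1)^2-(\alpha+r_Y-r_X)^2-\beta^2\bigr]}\ge(r_Y+1)\sqrt{r_X\bigl[(r_X+1)^2-\alpha^2-\beta^2\bigr]}+(r_Y-r_X)\sqrt{(r_X+1-\alpha)^2+\beta^2}.\]

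To verify this I would introduce the substitution $u=r_X+1-\alpha$, $v=r_X+1+\alpha$, $w=\beta$, $d=r_Y-r_X\ge 0$, under which the three radicands factor as $u(v+2d)-w^2$, $uv-w^2$, and $u^2+w^2$ respectively, all non-negative by proper overlap. The reduced polynomial inequality in $(u,v,w,d)$ is then checked by isolating the leftmost radical, squaring, isolating the remaining radical, and squaring again, tracking signs as they arise; the boundary case $d=0$ (in which $\diskone_0$ and $\disktwo_0$ coincide and the claim is trivial) is immediate by inspection.

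The main obstacle is the bookkeeping of signs and domains during the successive squaring, in the style of the analysis of the auxiliary function $\dummyfunction$ in the proof of Lemma~\ref{lemma:spacingcent}. However, the reduction in independent parameters compared to that lemma (one pair of disks is fixed equal and one tangency relation constrains the other pair) makes the computation substantially more tractable than its predecessor. The second sub-case of the lemma then follows by the symmetry $\timez\mapsto 1-\timez$ observed at the start.
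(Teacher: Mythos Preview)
Your setup is correct: the symmetry $\timez\mapsto 1-\timez$ legitimately reduces to one sub-case, the center-displacement formula checks out, and your substitution $u,v,w,d$ does factor the three radicands as you claim. Since in this lemma both $\nest{\diskone_0}{\disktwo_0}$ and $\nest{\diskone_1}{\disktwo_1}$ vanish, the statement reduces to $\nesthalf\ge 0$, which is exactly the radical inequality you wrote down.

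The paper takes a different route. Instead of normalizing $\diskone_1=\disktwo_1$ to the unit disk at the origin and squaring, it parameterizes by $\vara=|\centertwo-\centerthree|$, $\varb=\radi_{\diskone_0}+\radi_{\diskone_1}$, $\varc=\radi_{\diskone_1}=\radi_{\disktwo_1}$, $\vard=\radi_{\disktwo_0}+\radi_{\disktwo_1}$, and $\vare=\cos\angleof$ where $\angleof$ is the angle at $\centertwo$ between the rays toward $\centerone$ and $\centerthree$. It then treats $\dummyfunction=\nesthalf$ as a function of the single variable $\vare\in[-1,1]$, differentiates, finds the unique global minimizer $\vare_0=\frac{\vard^2+\vara^2-2\varc\vard}{2\vara(\vard-\varc)}$ explicitly, and checks $\dummyfunction(\vare_0)=0$. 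The degenerate configurations where $\centertwo$ coincides with $\centerone$ or $\centerthree$ are handled separately.

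What each buys: your double-squaring is purely algebraic and avoids calculus, but the second squaring will generate a sizable polynomial and you must genuinely verify the sign of the intermediate quantity $A^2P-B^2Q-C^2R$ before the second squaring is legitimate; you flag this but do not carry it out, so the proof as written is a plan rather than a verification. The paper's one-variable optimization locates the extremal geometric configuration directly and reduces the check to a single evaluation, at the modest cost of treating the collinear degenerations by hand.
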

\begin{proof}
We prove only the first statement; the second statement's proof is essentially identical.  If the center $\centertwo$ of $\disktwo_0$ coincides with either the center $\centerone$ of $\diskone_0$ or the center $\centerthree$ of $\diskone_1$ and $\disktwo_1$, then the proof can be completed by simple algebraic manipulation.  If $\centertwo$ is distinct from both $\centerone$ and $\centerthree$, we can parameterize with:
\begin{eqnarray*}
\vara&=&|\centertwo-\centerthree|\\
\varb&=&\radi_{\diskone_0}+\radi_{\diskone_1}\\
\varc&=&\radi_{\diskone_1}=\radi_{\disktwo_1}\\
\vard&=&\radi_{\disktwo_0}+\radi_{\disktwo_1}\\
\vare&=&\cos\angleof,\end{eqnarray*}
where $\angleof$ is the angle formed at $\centertwo$ by the rays toward $\centerone$ and $\centerthree$. Note that $|\centerone-\centertwo|=\vard-\varb$ and then by the law of cosines, \[|\centerone-\centerthree|=\sqrt{\vara^2+(\vard-\varb)^2-2\vara(\vard-\varb)\vare}\] and $|\cent_{\diskone_{\frac{1}{2}}}-\cent_{\disktwo_{\frac{1}{2}}}|$ is $\frac{\varc(\vard-\varb)\sqrt{\vara^2+\vard^2-2\vara\vard\vare}}{\varb\vard}$.  These formulae are still valid if $\centertwo$ coincides with $\centerone$ or $\centerthree$, even though $\vare$ is no longer well-defined in these cases.

Now, defining $\dummyfunction(\vara,\varb,\varc,\vard,\vare)$ as in the previous lemma with these different variables, we can differentiate with respect to $\vare$, and find that $\dummyfunction$ has a unique global minimum at 
\[
\vare_0=\frac{\vard^2+\vara^2-2\varc\vard}{2\vara(\vard-\varc)},
\]
which may or may not be in $[-1,1]$.  Whether or not it is, $\dummyfunction(\vare)\ge \dummyfunction(\vare_0)=0$, with equality only at $\vare=\vare_0$.

\end{proof}
\begin{proof}[of Proposition~\ref{prop:spacing}]
Let $\{\diskone_0,\diskone_1,\disktwo_0,\disktwo_1\}$ be disks as in the hypotheses of the proposition.  Then $\diskone_0$ is contained in a disk $\diskone_0'$ with the same center which is tangent to and contained in $\disktwo_0$, and likewise $\diskone_1$ is contained in a disk $\diskone_1'$ with the same center which is tangent to and contained in $\disktwo_1$.  Then by Lemma~\ref{lemma:spacingcent}, $\nest{\diskone_{\frac{1}{2}}}{\diskone_{\frac{1}{2}}'}\ge \min\{\nest{\diskone_0}{\diskone_0'},\nest{\diskone_1}{\diskone_1'}\}$.  If $\diskthree_0=\disktwo_0$ and $\diskthree_1=\diskone_1'$, then Lemma~\ref{lemma:spacingtang} shows that $\diskone_{\frac{1}{2}}'$ is nested in $\diskthree_{\frac{1}{2}}$ and that $\diskthree_{\frac{1}{2}}$ is nested in $\disktwo_{\frac{1}{2}}$.  So 
\[\nest{\diskone_{\frac{1}{2}}}{\disktwo_{\frac{1}{2}}}\ge \nest{\diskone_{\frac{1}{2}}}{\diskone_{\frac{1}{2}}'}\\\ge \min\{\nest{\diskone_0}{\diskone_0'},\nest{\diskone_1}{\diskone_1'}\} \ge \min\{\nest{\diskone_0}{\disktwo_0},\nest{\diskone_1}{\disktwo_1}\}.\]
We have shown that $\nest{\diskone_\frac{1}{2}}{\diskone_\frac{1}{2}'}$ is greater than zero unless $\diskone_\iti$ and $\diskone_\iti'$ coincide for both $=\iti=0$ and $\iti=1$, in which case $\diskone_\iti$ and $\disktwo_\iti$ coincide or are tangent for both $\iti=0$ and $\iti=1$.
\end{proof}
\begin{cor}\label{cor:fullnested}
Let $\{\diskone_0,\diskone_1,\disktwo_0,\disktwo_1\}$ be as in Proposition~\ref{prop:spacing}.  Then $\diskone_\timez$ is nested in $\disktwo_\timez$ for all $\timez\in [0,1]$, that is, $\nest{\diskone_\timez}{\disktwo_\timez}\ge 0$.  This inequality is strict for $\timez\in (0,1)$ unless $\nest{\diskone_0}{\disktwo_0}=\nest{\diskone_1}{\disktwo_1}=0$.
\end{cor}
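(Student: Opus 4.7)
The plan is to lift Proposition~\ref{prop:spacing}, which handles only the midpoint $\timez=\frac{1}{2}$, to all $\timez\in[0,1]$ by iterating with the halving identity of Lemma~\ref{lemma:halves} and then closing up by continuity. The basic observation is that, for properly overlapping disks, the function $\timez\mapsto\nest{\diskone_\timez}{\disktwo_\timez}$ is continuous (as $\homotopypart$ itself is), so it suffices to establish the two inequalities on a dense set.

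First I would iterate Lemma~\ref{lemma:halves}(4) to show that for any dyadic rationals $0\le s<t\le 1$, the restriction of $\homotopypart$ to $[s,t]$, reparameterized affinely onto $[0,1]$, agrees with the homotopy built from the endpoints $(\cent_s,\radi_s)$ and $(\cent_t,\radi_t)$. Lemma~\ref{lemma:randccalculations} shows that every intermediate disk $\diskone_u$ contains the nonempty intersection $\diskone_0\cap\diskone_1$, so the pairs $(\diskone_s,\diskone_t)$ and $(\disktwo_s,\disktwo_t)$ remain properly overlapping and Proposition~\ref{prop:spacing} applies to them. This yields, for the midpoint $m=(s+t)/2$ of any dyadic subinterval, the midpoint inequality
\[
\nest{\diskone_m}{\disktwo_m}\ge \min\bigl\{\nest{\diskone_s}{\disktwo_s},\,\nest{\diskone_t}{\disktwo_t}\bigr\},
\]
with the strict clause: equality to zero forces both nesting distances on the right to vanish.

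Next, write $a=\nest{\diskone_0}{\disktwo_0}$ and $b=\nest{\diskone_1}{\disktwo_1}$. Induction on $n$, using the midpoint inequality on intervals $[k/2^n,(k+1)/2^n]$, gives $\nest{\diskone_d}{\disktwo_d}\ge\min\{a,b\}\ge 0$ for every dyadic $d\in[0,1]$. Continuity and density then extend this to $\nest{\diskone_\timez}{\disktwo_\timez}\ge 0$ for all $\timez\in[0,1]$, giving the non-strict statement. For the strict case, assume $a+b>0$. The strict clause of Proposition~\ref{prop:spacing} forces $\nest{\diskone_{1/2}}{\disktwo_{1/2}}>0$; applying it again on each half, and inducting, yields strict positivity at every dyadic rational in $(0,1)$. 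Given an arbitrary $\timez_0\in(0,1)$, pick dyadic $t_1<\timez_0<t_2$ in $(0,1)$; then the (non-strict) midpoint inequality applied inside $[t_1,t_2]$, together with continuity, gives
\[
\nest{\diskone_\timez}{\disktwo_\timez}\ge\min\bigl\{\nest{\diskone_{t_1}}{\disktwo_{t_1}},\,\nest{\diskone_{t_2}}{\disktwo_{t_2}}\bigr\}>0
\]
throughout $[t_1,t_2]$, and in particular at $\timez_0$.

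The main obstacle is the bookkeeping in the first step: carefully chaining the halving identity of Lemma~\ref{lemma:halves}(4) to produce the claimed reparametrization over an arbitrary dyadic subinterval, and verifying that along the way the hypothesis of proper overlap is preserved so that Proposition~\ref{prop:spacing} is legally invoked at each midpoint. Once this reparametrization is in hand, the remaining density-plus-continuity argument is routine.
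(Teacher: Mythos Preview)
Your proposal is correct and follows essentially the same approach as the paper: induct via Lemma~\ref{lemma:halves} and Proposition~\ref{prop:spacing} to obtain the inequality (and its strict version) at dyadic rationals, then pass to all $\timez$ by density and continuity, handling the strict case by bracketing an arbitrary $\timez\in(0,1)$ between two dyadics where the nesting distance is already known to be positive. Your explicit invocation of Lemma~\ref{lemma:randccalculations} to verify that proper overlap persists at intermediate stages is a detail the paper leaves implicit.
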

\begin{proof}
We begin by showing the inequality for rational $\timez$ of the form $\frac{\integerthing}{2^\integerpower}$ ({\em dyadic rationals}).  We proceed by induction on $\integerpower$.  If $\integerpower=0$, the statement is true by assumption.  Suppose we have shown the statement for $\integerpower<\fixedpower$.  By repeated applications of Lemma~\ref{lemma:halves} and Proposition~\ref{prop:spacing}, the statement is true for $\integerpower=\fixedpower$.   Since being nested is a closed condition, being true on a dense subset of the interval implies that it is true throughout the interval.  

Now assume, without loss of generality, that $\nest{\diskone_0}{\disktwo_0}>0$.  By the same induction, $\nest{\diskone_\timez}{\disktwo_\timez}>0$ for dyadic rationals except possibly at $\timez=1$.  Then for irrational $\timez$, pick two consecutive such rationals bracketing it: $\lowerrational=\frac{\integerthing}{2^\integerpower}<\timez<\higherrational=\frac{\integerthing+1}{2^\integerpower}<1$.  Then as the induction proceeds, for all dyadic rationals $\middlerational$ between $\lowerrational$ and $\higherrational$, $\nest{\diskone_\middlerational}{\disktwo_\middlerational}\ge \min\{\nest{\diskone_\lowerrational}{\disktwo_\lowerrational},\nest{\diskone_\higherrational}{\disktwo_\higherrational}\}>0$.  Then this inequality holds in the limit and $\nest{\diskone_\timez}{\disktwo_\timez}$ is bounded away from zero.
\end{proof}
\begin{defi}
Let $\halfplanes$ be the space of half-planes in the standard plane, identified with $\thecircle\times \rR$ in the following manner: the pair $(\angleof, \displacement)$ consists of points in the plane whose inner product with $(\cos\angleof,\sin\angleof)$ is greater than (or greater than or equal to) $\displacement$.  We will not distinguish between open and closed half-planes, which are in one-to-one correspondence.  Let the space of {\em admissible pairs of half-planes}, $\admissibles\subset \halfplanes^2$ be the subspace of points $(\angleof_1,\displacement_1,\angleof_2,\displacement_2)$ where $\angleof_1\ne -\angleof_2$.  

We define the {\em intermediate half plane} of an admissible pair as a map $\admissibles\to\halfplanes$ defined as
\[
  (\angleof_1,\displacement_1,\angleof_2,\displacement_2)\mapsto \left(\frac{\angleof_1+\angleof_2}{2},\frac{\displacement_1+\displacement_2}{2\cos\frac{\angleof_1-\angleof_2}{2}}\right)\]
where representatives of $\angleof_1$ and $\angleof_2$ are chosen so that they differ by less than $\pi$ (this ensures that the halves are well-defined).

In words, if the bounding lines of the two half-planes intersect, then the bounding line of the intermediate half-plane goes through their intersection, bisecting one of the pairs of vertical angles.  If the bounding lines are parallel, then the intermediate half-plane is parallel to both and half-way between them. In either case, the intermediate half-plane contains the intersection of the two half-planes of the admissible pair.
\end{defi}
\begin{prop}\label{prop:venn}
Let $\diskone_0$ and $\diskone_1$ be properly overlapping disks.  Choose a half plane $\halfplane_\iti$ containing $\diskone_\iti$ whose boundary line is tangent to $\diskone_\iti$.  If the pair $(\halfplane_0,\halfplane_1)$ is admissible, then $\diskone_{\frac{1}{2}}$ is contained in the intermediate half-plane of the pair.
\end{prop}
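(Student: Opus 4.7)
The plan is to deduce the proposition from Corollary~\ref{cor:fullnested} by approximating each tangent half-plane $\halfplane_\iti$ by an increasing sequence of large disks $\disktwo_\iti^R$ that contain $\diskone_\iti$, and then passing the resulting nesting of midpoint disks to the limit $R\to\infty$.

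Concretely, let $n_\iti=(\cos\angleof_\iti,\sin\angleof_\iti)$ denote the inward normal of $\halfplane_\iti$, write $T_\iti=\cent_\iti-\radi_{\diskone_\iti}n_\iti$ for the point of tangency of $\diskone_\iti$ on $\partial\halfplane_\iti$, and for each large $R$ let $\disktwo_\iti^R$ be the disk of radius $R$ centered at $T_\iti+Rn_\iti$. Each $\disktwo_\iti^R$ is internally tangent to $\halfplane_\iti$ at $T_\iti$, contains $\diskone_\iti$, and fills out $\halfplane_\iti$ as $R\to\infty$. Setting $2\beta=\angleof_0-\angleof_1$ with $|\beta|<\pi/2$ (which is guaranteed by admissibility), the identity $|n_0-n_1|^2=4\sin^2\beta$ gives $|\cent_0^R-\cent_1^R|=2R|\sin\beta|+O(1)<2R=\radi_0^R+\radi_1^R$, so the two approximating disks are properly overlapping for $R$ large, and Corollary~\ref{cor:fullnested} yields $\diskone_{\frac{1}{2}}\subseteq\disktwo_{\frac{1}{2}}^R$ for all sufficiently large $R$.

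Next I would compute the asymptotic shape of $\disktwo_{\frac{1}{2}}^R$ using Lemma~\ref{lemma:halves}. The bisector identity $n_0+n_1=2\cos\beta\cdot n^*$ (with $n^*=(\cos\angleof^*,\sin\angleof^*)$) gives $\cent_{\frac{1}{2}}^R=\tfrac{1}{2}(T_0+T_1)+R\cos\beta\cdot n^*$, and Taylor-expanding the square-root formula for $\radi_{\frac{1}{2}}^R$ gives $\radi_{\frac{1}{2}}^R=R\cos\beta-k+O(1/R)$ with $k=\frac{(T_0-T_1)\cdot(n_0-n_1)}{4\cos\beta}$. Expanding $|p-\cent_{\frac{1}{2}}^R|^2-(\radi_{\frac{1}{2}}^R)^2$ then shows that for fixed $p$ with $(p-\tfrac{1}{2}(T_0+T_1))\cdot n^*<k$, the point $p$ eventually leaves $\disktwo_{\frac{1}{2}}^R$; since $\diskone_{\frac{1}{2}}$ is compact and sits in every $\disktwo_{\frac{1}{2}}^R$, it must lie in the half-plane $\{p:(p-\tfrac{1}{2}(T_0+T_1))\cdot n^*\ge k\}$.

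The main obstacle is identifying this half-plane with $\halfplane^*$. This reduces to verifying that $d^*=\tfrac{1}{2}(T_0+T_1)\cdot n^*+k$, which I expect to follow from the polarization identity $2(T_0\cdot n_0+T_1\cdot n_1)=(T_0+T_1)\cdot(n_0+n_1)+(T_0-T_1)\cdot(n_0-n_1)$: substituting $T_\iti\cdot n_\iti=d_\iti$ and $n_0+n_1=2\cos\beta\cdot n^*$ and then dividing by $2\cos\beta$ yields exactly $d^*=\frac{d_0+d_1}{2\cos\beta}=\tfrac{1}{2}(T_0+T_1)\cdot n^*+k$. This step is elementary but needs careful bookkeeping of the trigonometry; once verified, the desired inclusion $\diskone_{\frac{1}{2}}\subseteq\halfplane^*$ is immediate, and the parallel case $\beta=0$ is a transparent specialization of the same computation.
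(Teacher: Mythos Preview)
Your argument is correct and genuinely different from the paper's. The paper proves Proposition~\ref{prop:venn} by a direct trigonometric computation: it writes out the condition that $\diskone_{\frac{1}{2}}$ lies in the intermediate half-plane as an explicit inequality in the centers, radii, and angles, then uses the tangency hypotheses $\langle\cent_\iti,(\cos\angleof_\iti,\sin\angleof_\iti)\rangle=\displacement_\iti+\radi_\iti$ and sum-to-product identities to reduce it, after several rearrangements and a minimization over the direction of $\cent_0-\cent_1$, to a nonnegative perfect square.

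Your route instead \emph{deduces} the half-plane statement from the disk-nesting result Proposition~\ref{prop:spacing} (which is what Corollary~\ref{cor:fullnested} at $\timez=\tfrac12$ gives) by approximating each tangent half-plane with a large internally tangent disk and passing to the limit. There is no circularity: Proposition~\ref{prop:spacing} is established via Lemmas~\ref{lemma:spacingcent} and~\ref{lemma:spacingtang}, independently of Proposition~\ref{prop:venn}. The asymptotics you compute for $\cent_{\frac12}^R$ and $\radi_{\frac12}^R$ are right, and your polarization identity check that the limiting half-plane has displacement $\frac{\displacement_0+\displacement_1}{2\cos\beta}$ is exactly what is needed. (Compactness of $\diskone_{\frac12}$ is not actually required; your exclusion argument is already pointwise.) The payoff of your approach is conceptual: it shows that the ``disjointness'' half of Proposition~\ref{prop:homotopyconstruction} (via Corollaries~\ref{cor:halfvenn} and~\ref{cor:fullvenn}) is really a limiting consequence of the ``nesting'' half, so the two pillars of Appendix~\ref{appendix:homotopy} are not logically independent. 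The paper's direct computation, by contrast, keeps the two threads separate and avoids any appeal to the somewhat delicate Lemmas~\ref{lemma:spacingcent} and~\ref{lemma:spacingtang}.
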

\begin{proof}
Let $\halfplane_\iti=(\angleof_\iti,\displacement_\iti)$.  It suffices to show that
\begin{multline*}\left\langle \frac{\cent_1\radi_0+\cent_0\radi_1}{\radi_0+\radi_1},\left(\cos\left(\frac{\angleof_0+\angleof_1}{2}\right),\sin\left(\frac{\angleof_0+\angleof_1}{2}\right)\right)\right\rangle
\\\ge
\frac{\displacement_0+\displacement_1}{2\cos\left(\frac{\angleof_0-\angleof_1}{2}\right)}+\sqrt{\radi_0\radi_1}\frac{\sqrt{(\radi_0+\radi_1)^2-|\cent_0-\cent_1|^2}}{\radi_0+\radi_1}.
\end{multline*}
By assumption, $\langle \cent_\iti, (\cos\angleof_\iti,\sin\angleof_\iti)\rangle\ge \displacement_\iti+\radi_\iti$.  Using this and the identity \[
\cos\left(\frac{\angleof_0+\angleof_1}{2}\right)=\frac{\cos\angleof_0+\cos\angleof_1}{2\cos\left(\frac{\angleof_0-\angleof_1}{2}\right)},
\]
along with the cognate identity for sine, means that it suffices to show
\begin{multline*}
  \left\langle \cent_1\radi_0+\cent_0\radi_1,\left(\cos\angleof_0+\cos\angleof_1,\sin\angleof_0+\sin\angleof_1\right)\right\rangle
\\\ge
(\radi_0+\radi_1)\left(-\radi_0-\radi_1+\langle \cent_0,(\cos\angleof_0,\sin\angleof_0)\rangle+\langle \cent_1,(\cos\angleof_1,\sin\angleof_1)\rangle\right) \\+2\sqrt{\radi_0\radi_1}\sqrt{(\radi_0+\radi_1)^2-|\cent_0-\cent_1|^2}\cos\left(\frac{\angleof_0-\angleof_1}{2}\right).
\end{multline*}  
Rearranging, this is the same as
\begin{multline*}
(\radi_0+\radi_1)^2 + \radi_1\langle \cent_0-\cent_1,(\cos\angleof_1,\sin\angleof_1)\rangle + \radi_0\langle \cent_1-\cent_0,(\cos\angleof_0,\sin\angleof_0)\rangle
\\\ge
\sqrt{4\radi_0\radi_1}\sqrt{(\radi_0+\radi_1)^2-|\cent_0-\cent_1|^2}\cos\left(\frac{\angleof_0-\angleof_1}{2}\right).
\end{multline*}
If $\cent_0=\cent_1$,  then squaring both sides clearly yields the desired inequality. So assume that $\cent_0\ne \cent_1$, so that we can write $\cent_0-\cent_1=|\cent_0-\cent_1|(\cos\randomangle,\sin\randomangle)$ for some angle $\randomangle$.  Then only the second and third terms have any dependence on $\randomangle$; explicitly they are:
\[|\cent_0-\cent_1|(\radi_1\cos\randomangle\cos\angleof_1+\radi_1\sin\randomangle\sin\angleof_1-\radi_0\cos\randomangle\cos\angleof_0-\radi_0\sin\randomangle\sin\angleof_0)\]
this quantity reaches its maximum and minimum values when
\[
\cos\randomangle=\pm\frac{\radi_0\cos\angleof_0-\radi_1\cos\angleof_1}{\sqrt{\radi_0^2+\radi_1^2-2\radi_0\radi_1\cos(\angleof_0-\angleof_1)}};\
\sin\randomangle=\pm\frac{\radi_0\sin\angleof_0-\radi_1\sin\angleof_1}{\sqrt{\radi_0^2+\radi_1^2-2\radi_0\radi_1\cos(\angleof_0-\angleof_1)}},\]
with the same sign in both cases.  The minimum value is \[-\sqrt{\radi_0^2+\radi_1^2-2\radi_0\radi_1\cos(\angleof_0-\angleof_1)},\] so it suffices to show the inequality
\begin{multline*}
(\radi_0+\radi_1)^2 -\sqrt{\radi_0^2+\radi_1^2-2\radi_0\radi_1\cos(\angleof_0-\angleof_1)}|\cent_0-\cent_1|
\\\ge
\sqrt{4\radi_0\radi_1}\sqrt{(\radi_0+\radi_1)^2-|\cent_0-\cent_1|^2}\cos\left(\frac{\angleof_0-\angleof_1}{2}\right).
\end{multline*}  
The left hand side of this inequality is nonnegative, since each of the factors in the second term is less than or equal to $\radi_0+\radi_1$, so it suffices to show the square of this inequality.  Using $\displaystyle\cos^2\left(\frac{\angleof_0-\angleof_1}{2}\right)=\frac{1+\cos(\angleof_0-\angleof_1)}{2}$, and rearranging, this is:
\[
(\radi_0+\radi_1)^2\left(\sqrt{\radi_0^2+\radi_1^2-2\radi_0\radi_1\cos(\angleof_0-\angleof_1)}-|\cent_0-\cent_1|\right)^2\ge 0,
\]
which is certainly true.
\end{proof}

\begin{cor}\label{cor:halfvenn}
Let $\diskone_0$ and $\diskone_1$ be properly overlapping disks, and let $\disktwo_0$ and $\disktwo_1$ be properly overlapping disks.  Suppose also that the distances between $\diskone_\iti$ and $\disktwo_\iti$ are at least $\smallepsilon_\iti>0$, so that $\diskone_\iti$ and $\disktwo_\iti$ do not overlap. Then the distance from $\diskone_{\frac{1}{2}}$ to $\disktwo_{\frac{1}{2}}$ is at least $\frac{\smallepsilon_0+\smallepsilon_1}{2}$.
\end{cor}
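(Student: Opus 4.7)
My plan is to prove Corollary~\ref{cor:halfvenn} by applying Proposition~\ref{prop:venn} twice, once to each pair of disks, using a pair of tangent half-planes that are antiparallel to one another, and then bounding the distance between the two resulting intermediate half-planes.

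For each $\iti\in\{0,1\}$, since $\diskone_\iti$ and $\disktwo_\iti$ are separated by $\smallepsilon_\iti$, let $\angleof_\iti$ be the angle whose unit vector $(\cos\angleof_\iti,\sin\angleof_\iti)$ points from $\cent_{\disktwo_\iti}$ to $\cent_{\diskone_\iti}$. Let $\halfplane_\iti=(\angleof_\iti,\displacement_\iti)$ be the half-plane with this normal direction tangent to $\diskone_\iti$ on the side away from $\disktwo_\iti$, so $\displacement_\iti=\langle \cent_{\diskone_\iti},(\cos\angleof_\iti,\sin\angleof_\iti)\rangle - \radi_{\diskone_\iti}$. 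Let $\halfplane_\iti^*=(\angleof_\iti+\pi,\tau_\iti)$ be the antiparallel half-plane tangent to $\disktwo_\iti$, so $\tau_\iti=-\langle \cent_{\disktwo_\iti},(\cos\angleof_\iti,\sin\angleof_\iti)\rangle - \radi_{\disktwo_\iti}$. A direct calculation yields the key identity
\[\displacement_\iti+\tau_\iti \;=\; |\cent_{\diskone_\iti}-\cent_{\disktwo_\iti}|-\radi_{\diskone_\iti}-\radi_{\disktwo_\iti}\;\geq\;\smallepsilon_\iti.\]

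Assume provisionally that $(\halfplane_0,\halfplane_1)$ is admissible; then so is $(\halfplane_0^*,\halfplane_1^*)$, since both conditions amount to $(\cos\angleof_0,\sin\angleof_0)\neq -(\cos\angleof_1,\sin\angleof_1)$. Proposition~\ref{prop:venn} applied to $\diskone_0,\diskone_1$ with $\halfplane_0,\halfplane_1$ puts $\diskone_{\frac{1}{2}}$ inside the intermediate half-plane, which has inward normal $\vec{n}:=\bigl(\cos\tfrac{\angleof_0+\angleof_1}{2},\sin\tfrac{\angleof_0+\angleof_1}{2}\bigr)$ and displacement $\displacement_{\text{mid}}=\frac{\displacement_0+\displacement_1}{2\cos((\angleof_0-\angleof_1)/2)}$. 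Applying the proposition again to $\disktwo_0,\disktwo_1$ with $\halfplane_0^*,\halfplane_1^*$ places $\disktwo_{\frac{1}{2}}$ inside the intermediate half-plane with inward normal $-\vec{n}$ and displacement $\tau_{\text{mid}}=\frac{\tau_0+\tau_1}{2\cos((\angleof_0-\angleof_1)/2)}$. For any $\varone\in \diskone_{\frac{1}{2}}$ and $\vartwo\in\disktwo_{\frac{1}{2}}$ we then have $\langle \varone,\vec{n}\rangle\geq\displacement_{\text{mid}}$ and $\langle -\vartwo,\vec{n}\rangle\geq\tau_{\text{mid}}$, so
\[|\varone-\vartwo|\;\geq\;\langle \varone-\vartwo,\vec{n}\rangle\;\geq\;\displacement_{\text{mid}}+\tau_{\text{mid}}\;=\;\frac{(\displacement_0+\tau_0)+(\displacement_1+\tau_1)}{2\cos((\angleof_0-\angleof_1)/2)}\;\geq\;\frac{\smallepsilon_0+\smallepsilon_1}{2},\]
with the final step using $\cos\leq 1$.

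The main obstacle is the degenerate configuration where the natural choice $(\halfplane_0,\halfplane_1)$ is inadmissible, which happens exactly when the unit vectors pointing from $\cent_{\disktwo_\iti}$ to $\cent_{\diskone_\iti}$ for $\iti=0,1$ are antiparallel. I would resolve this by a perturbation: rotate the tangent line of $\halfplane_1$ about $\cent_{\diskone_1}$ by a small angle $\delta>0$, obtaining a tangent half-plane of angle $\angleof_1+\delta$. A short computation gives $\displacement_1+\tau_1 = |\cent_{\diskone_1}-\cent_{\disktwo_1}|\cos\delta - \radi_{\diskone_1}-\radi_{\disktwo_1}\geq \smallepsilon_1 - O(\delta^2)$, while $\cos((\angleof_0-\angleof_1-\delta)/2) = \sin(\delta/2)$. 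The preceding argument then produces the lower bound $\frac{\smallepsilon_0+\smallepsilon_1-O(\delta^2)}{2\sin(\delta/2)}$, which tends to $+\infty$ as $\delta\to 0^+$ and hence already exceeds $\frac{\smallepsilon_0+\smallepsilon_1}{2}$ for all sufficiently small $\delta>0$, completing the proof.
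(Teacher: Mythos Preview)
Your argument is correct and follows essentially the same route as the paper: choose antiparallel tangent half-planes for $\diskone_\iti$ and $\disktwo_\iti$, apply Proposition~\ref{prop:venn} to each pair, and bound the gap between the resulting intermediate half-planes. The only difference is in the inadmissible case $\angleof_0=\angleof_1+\pi$: the paper rules it out directly by observing that antiparallel tangent half-planes with empty intersection would force either $\diskone_0\cap\diskone_1$ or $\disktwo_0\cap\disktwo_1$ to be empty, contradicting proper overlap; you instead perturb and obtain a lower bound of the form $\frac{\smallepsilon_0+\smallepsilon_1-O(\delta^2)}{2\sin(\delta/2)}$. Your perturbation is valid, but note that since this lower bound on a fixed finite distance tends to $+\infty$, your argument actually shows (more than you claim) that the degenerate configuration cannot occur at all---which is exactly the paper's observation, reached more circuitously.
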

\begin{proof}
Choose the half-plane $\halfplane_{\diskone_\iti}=(\angleof_{\diskone_\iti},\displacement_{\diskone_\iti})$  containg $\diskone_\iti$ whose boundary line is tangent to $\diskone_\iti$ at the closest boundary point to the center of $\disktwo_\iti$, and vice versa.  Then $\angleof_{\diskone_\iti}=-\angleof_{\disktwo_\iti}$ and $\halfplane_{\diskone_\iti}$ and $\halfplane_{\disktwo_\iti}$ have empty intersection.  It is easy to see that if $\angleof_{\diskone_0}=-\angleof_{\diskone_1}$, that either $\halfplane_{\diskone_0}\cap \halfplane_{\diskone_1}$ or $\halfplane_{\disktwo_0}\cap \halfplane_{\disktwo_1}$ is empty.  But $\halfplane_{\diskone_0}\cap \halfplane_{\diskone_1}$ contains $\diskone_0\cap \diskone_1$, which is nonempty by assumption, and likewise for $\disktwo_\iti$.  So the pair $\halfplane_{\diskone_\iti}$ (likewise $\halfplane_{\disktwo_\iti}$) is admissible.

Then the angles of the intermediate halfplanes $\halfplane_\diskone=(\angleof_\diskone,\displacement_\diskone)$ and $\halfplane_\disktwo=(\angleof_\disktwo,\displacement_\disktwo)$ are additive inverses of one another.  The distance between these two half-planes is then $\min\{0,\displacement_\diskone+\displacement_\disktwo\}$.  But $\displaystyle \cos\frac{\angleof_{\diskone_0}-\angleof_{\diskone_1}}{2}=\cos\frac{\angleof_{\disktwo_0}-\angleof_{\disktwo_1}}{2}>0$, which implies:
\[
  \displacement_\diskone+\displacement_\disktwo=\frac{\displacement_{\diskone_0}+\displacement_{\diskone_1}+\displacement_{\disktwo_0}+\displacement_{\disktwo_1}}{2\cos\frac{\angleof_{\diskone_0}-\angleof_{\diskone_1}}{2}}\ge \frac{\displacement_{\diskone_0}+\displacement_{\disktwo_0}}{2}+\frac{\displacement_{\diskone_1}+\displacement_{\disktwo_1}}{2}=\frac{\smallepsilon_0}{2}+\frac{\smallepsilon_1}{2}\] 
\end{proof}
\begin{cor}\label{cor:fullvenn}
Let $\diskone_\iti$, $\disktwo_\iti$ be as in Corollary~\ref{cor:halfvenn}.  Then $\diskone_\timez$ and $\disktwo_\timez$ do not properly overlap one another.
\end{cor}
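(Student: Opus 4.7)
The plan is to mimic the dyadic induction used in the proof of Corollary~\ref{cor:fullnested}, replacing Proposition~\ref{prop:spacing} by Corollary~\ref{cor:halfvenn} and the condition ``nested'' by the condition ``not properly overlapping''. Set $\smallepsilon = \min\{\smallepsilon_0, \smallepsilon_1\} > 0$. I claim that for every dyadic rational $\timez = k/2^m \in [0,1]$, the distance between $\diskone_\timez$ and $\disktwo_\timez$ is at least $\smallepsilon$; Corollary~\ref{cor:fullvenn} follows since this distance is a continuous function of $\timez$ and dyadic rationals are dense in $[0,1]$, while the condition $|\cent_{\diskone_\timez} - \cent_{\disktwo_\timez}| \ge \radi_{\diskone_\timez} + \radi_{\disktwo_\timez}$ is closed.

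I would prove the dyadic claim by induction on $m$. The base case $m=0$ is the hypothesis of Corollary~\ref{cor:halfvenn}. For the inductive step, any dyadic rational $k/2^{m+1}$ with $k$ odd is the midpoint of two consecutive dyadic rationals $\lowerrational = (k-1)/2^{m+1}$ and $\higherrational = (k+1)/2^{m+1}$ of denominator at most $2^m$. Applying Lemma~\ref{lemma:halves} to the original homotopies, after affine reparameterization of $[\lowerrational,\higherrational]$ onto $[0,1]$, identifies $\diskone_{k/2^{m+1}}$ with the time-$\tfrac{1}{2}$ slice of the $\homotopypart$-homotopy between $\diskone_\lowerrational$ and $\diskone_\higherrational$, and similarly for $\disktwo$. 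By the inductive hypothesis, both pairs have endpoint separations at least $\smallepsilon$, so Corollary~\ref{cor:halfvenn} produces a midpoint separation of at least $(\smallepsilon + \smallepsilon)/2 = \smallepsilon$, completing the induction.

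The only subtle point is that Corollary~\ref{cor:halfvenn} may only be invoked when $\{\diskone_\lowerrational, \diskone_\higherrational\}$ and $\{\disktwo_\lowerrational, \disktwo_\higherrational\}$ are each properly overlapping pairs. This is built into the self-similarity encoded in Lemma~\ref{lemma:halves}: since the original homotopy is defined on all of $[0,1]$, the disks $\diskone_\lowerrational$ and $\diskone_\higherrational$ exist, and iterating Lemma~\ref{lemma:halves} realizes the subfamily $\{\diskone_\timez\}_{\timez \in [\lowerrational,\higherrational]}$ as an affine reparameterization of a $\homotopypart$-homotopy, which is only defined when its endpoints properly overlap. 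The same observation applies to $\disktwo$. This verification is the only substantive bookkeeping step; everything else is a direct transcription of the dyadic-density argument from the nesting case.
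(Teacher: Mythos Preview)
Your proposal is correct and follows essentially the same approach as the paper's proof: both use the dyadic induction of Corollary~\ref{cor:fullnested}, invoking Corollary~\ref{cor:halfvenn} at each halving step to maintain the lower bound $\min\{\smallepsilon_0,\smallepsilon_1\}$ on the separation, and then pass to all $\timez$ by density. Your explicit handling of the ``subtle point''---that Lemma~\ref{lemma:halves} guarantees the consecutive dyadic disks are properly overlapping so that Corollary~\ref{cor:halfvenn} applies---is a detail the paper leaves implicit but which is indeed justified by that lemma.
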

\begin{proof}
Following the same logic as Corollary~\ref{cor:fullnested}, by induction if $\timez$ is a dyadic rational then the distance between $\diskone_\timez$ and $\disktwo_\timez$ is at least as great as the lesser distance between $\diskone_0$ and $\disktwo_0$ or $\diskone_1$ and $\disktwo_1$.  Then the same is true for all $\timez$ since the dyadic rationals are dense.
\end{proof}
\begin{acknowledgements}
The author would like to thank Kevin Costello, Joseph Hirsh, and John Terilla for many helpful conversations, and the referees for several useful suggestions.
\end{acknowledgements}

\bibliography{references-1}{}
\bibliographystyle{amsalpha}
\end{document}